\documentclass[11pt]{amsart}
\usepackage[T1]{fontenc}
\usepackage{amsaddr}
\usepackage{amsxtra}%
\usepackage{amsfonts}%
\usepackage{amssymb}%
\usepackage{mathtools}
\usepackage{bm} 
\usepackage[margin=1.2in]{geometry}
\usepackage{color}
\usepackage{graphicx}
\usepackage{subfig}
\usepackage{cite}
\usepackage{color}
\usepackage{array}
\usepackage{booktabs}
\usepackage{caption}
\usepackage{microtype}
\usepackage{mathabx}
\usepackage{tikz, color}
\usetikzlibrary{matrix,arrows}

\usepackage{mleftright}
\mleftright
\usepackage{centernot}

\usepackage[hypertexnames=false]{hyperref}
\hypersetup{colorlinks,breaklinks,
             linkcolor=blue,urlcolor=blue,
             anchorcolor=blue,citecolor=blue}

\renewcommand{\L}{{\mathcal{GL}}}
\newcommand{\boldeta}{\bm{\eta}}
\newcommand{\Id}{\mathrm{Id}}

\DeclareMathOperator{\supp}{supp}

\DeclareMathOperator{\dist}{dist}

\DeclareMathOperator{\divv}{div}

\DeclareMathOperator{\Tr}{Tr}
\DeclareMathOperator{\esssup}{esssup}
\DeclareMathOperator*{\Glim}{\Gamma\text{-}\lim}

\renewcommand{\P}{{\mathbb P}}

\renewcommand{\bar}[1]{{\overline{#1}}}
\newcommand{\F}{{\mathcal F}}
\newcommand{\E}{{\mathcal E}}
\newcommand{\GE}{{\mathcal{GE}}}
\newcommand{\X}{{\mathcal X}}
\newcommand{\R}{\mathbb{R}}
\newcommand{\N}{\mathbb{N}}

\newcommand{\eps}{\varepsilon}
\renewcommand{\hat}[1]{\widehat{#1}}
\renewcommand{\tilde}[1]{\widetilde{#1}}

\def\Xint#1{\mathchoice
{\XXint\displaystyle\textstyle{#1}}%
{\XXint\textstyle\scriptstyle{#1}}%
{\XXint\scriptstyle\scriptscriptstyle{#1}}%
{\XXint\scriptscriptstyle\scriptscriptstyle{#1}}%
\!\int}
\def\XXint#1#2#3{{\setbox0=\hbox{$#1{#2#3}{\int}$ }
\vcenter{\hbox{$#2#3$ }}\kern-.6\wd0}}
\def\dashint{\Xint-}

\newcommand{\red}{\normalcolor}

\definecolor{mygreen}{rgb}{0.1,0.75,0.2}

\newcommand{\nc}{\normalcolor}
\newcommand{\te}{\textrm}
\newcommand{\tacka}{\,\cdot\,}

\newtheorem{theorem}{Theorem}[section]
\newtheorem{lemma}[theorem]{Lemma}
\newtheorem{corollary}[theorem]{Corollary}

\newtheorem{proposition}[theorem]{Proposition}
\theoremstyle{definition}
\newtheorem{remark}[theorem]{Remark}
\newtheorem{definition}[theorem]{Definition}

%

\graphicspath{{./figures/png8bit1024res/}}

\begin{document} 
\title[Properly-weighted graph Laplacian]{properly-weighted graph Laplacian for semi-supervised learning}

\author{Jeff Calder}
\address{Department of Mathematics, University of Minnesota}
\email{jcalder@umn.edu}

\author{Dejan Slep\v cev}
\address{Department of Mathematical Sciences, Carnegie Mellon University}
\email{slepcev@math.cmu.edu}

\begin{abstract}
The performance of traditional graph Laplacian methods for semi-supervised learning degrades substantially as the ratio of labeled to unlabeled data decreases, due to a degeneracy in the graph Laplacian. Several approaches have been proposed recently to address this, however we show that some of them remain ill-posed in the large-data limit.

 In this paper, we show a way to  correctly set the weights in Laplacian regularization so that the estimator remains well posed and stable in the large-sample limit. We prove that our semi-supervised learning algorithm converges, in the infinite sample size limit, to the smooth solution of a continuum variational problem that attains the labeled values continuously. Our method is fast and easy to implement.
\end{abstract}


\keywords{ 
semi-supervised learning, label propagation,  asymptotic consistency, PDEs on graphs, Gamma-convergence}

\subjclass{
49J55,   35J20, 35B65, 62G20,  65N12}

\maketitle

\section{Introduction}

For many applications of machine learning, such as medical image classification and speech recognition, labeling data requires human input and is expensive \cite{ssl}, while unlabeled data is relatively cheap. \emph{Semi-supervised} learning aims to exploit this dichotomy by utilizing the geometric or topological properties of the unlabeled data, in conjunction with the labeled data, to obtain better learning algorithms. A significant portion of the semi-supervised literature is on \emph{transductive} learning, whereby a function is learned only at the unlabeled points, and not as a parameterized function on an ambient space. In the transductive setting, graph based algorithms, such the graph Laplacian-based learning pioneered by \cite{zhu2003semi}, are widely used and have achieved great success\cite{zhou2005learning,zhou2004learning,zhou2004ranking,ando2007learning,he2004manifold,he2006generalized,wang2013multi,yang2013saliency,zhou2011iterated,xu2011efficient}.

Using graph Laplacians to propagate information from labeled to unlabeled points is one of the earliest and most popular approaches \cite{zhu2003semi}. The 
 constrained version of the graph Laplacian learning problem is to minimize over all $u : \X \to \R$
\begin{align}\label{eq:L2}
\begin{split}
GL(u) =&
 \sum_{x,y\in \X} w_{xy}(u(x) - u(y))^2 \\
\hspace*{-30pt} \te{subject to constraint } \quad  & u(x)=g(x) \te{ for  all }  x\in \Gamma
\end{split}
\end{align}
 where the data points $\X$ form the vertices of a graph with edge weights $w_{xy}$ and $\Gamma \subset \X$ are the labeled nodes with label function $g:\Gamma\to \R$. The minimizer $u$ of \eqref{eq:L2} is the learned function, which extends the given labels $g$ on $\Gamma$ to the remainder of the graph. In classification contexts, the values of $u$ are often rounded to the nearest label. The method amounts to minimizing a Dirichlet energy on the graph, subject to a Dirichlet condition $u=g$ on $\Gamma$. Minimizers $u$ are harmonic functions on the graph, and thus the problem can be view as harmonic extension.

\begin{figure}
\centering
\includegraphics[width=0.5\textwidth,clip = true, trim = 30 30 30 30]{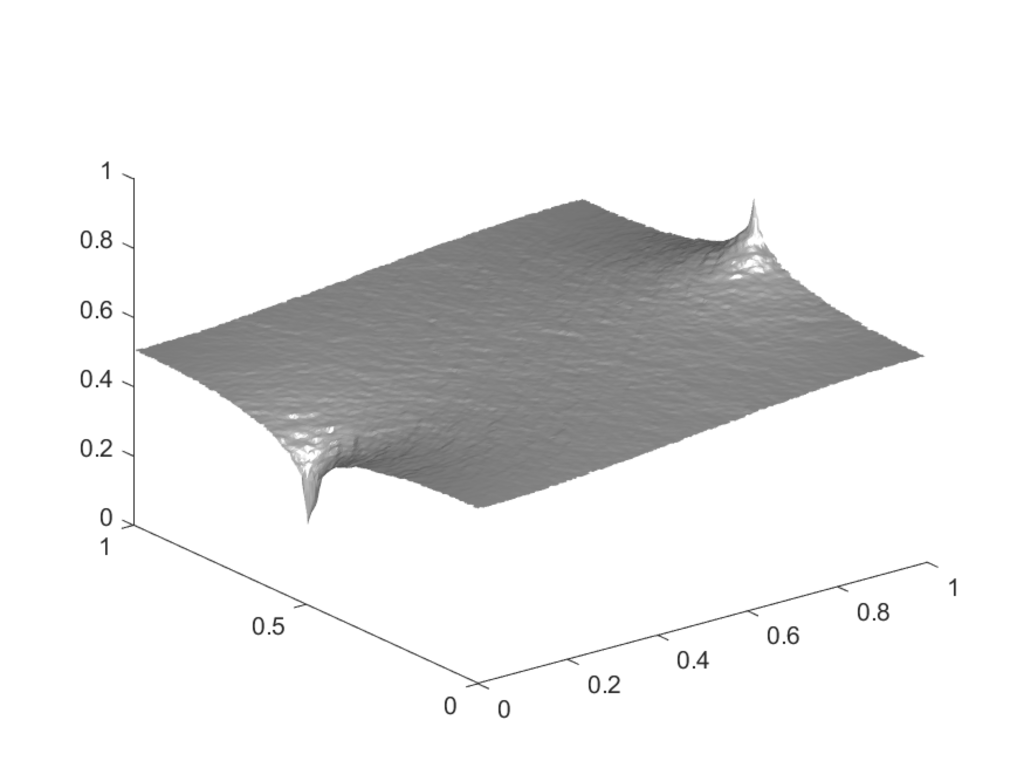}
\caption{Example of the degeneracy of graph Laplacian learning with few labels. The graph is a sequence of $n=10^5$ \emph{i.i.d.}~random variables drawn from the unit box $[0,1]^2$ in $\R^2$, and two labels are given $g(0,0.5)=0$ and $g(1,0.5)=1$.}
\label{fig:demo}
\end{figure}
It has been observed \cite{nadler2009semi,el2016asymptotic} that when the size of $\Gamma$ (the labeled points) is small, the performance of graph Laplacian learning algorithms degrades substantially. In practice, the learned function $u$ fails to attain the conditions $u=g$ on $\Gamma$ continuously, and degenerates into a constant label function that provides little information about the machine learning problem. Figure \ref{fig:demo} gives an example of this issue. There are several ways to explain this degeneracy. First, in the limit of infinite data, the variational problem \eqref{eq:L2} is consistent with the continuum Dirichlet problem
\begin{equation}\label{eq:DP}
\min_u \int_\Omega |\nabla u|^2 \, dx,
\end{equation}
subject to a boundary condition $u=g$ on $\Gamma\subset \Omega\subset \R^d$. If $\Gamma$ is finite this problem is ill-posed since the trace of an $H^1(\Omega)$ function at a point is not well-defined.
In particular, there are minimizing sequences for the constrained problem converging to a constant function outside of $\Gamma$ for which the Dirichlet energy converges to zero. In particular the minimum is not attained. 
 From another perspective, minimizers of the continuum Dirichlet problem \eqref{eq:DP} satisfy Laplace's equation $\Delta u=0$ with Dirichlet condition $u=g$ on $\Gamma$, and Laplace's equation is not well-posed without some boundary regularity (an exterior sphere condition), which does not hold for isolated points. In both cases, we are simply observing that the capacity of a point is zero in dimensions $d\geq 2$. 

Several methods have been proposed recently to address the degeneracy of Laplacian learning with few labels. In \cite{el2016asymptotic}, a class of $p$-Laplacian learning algorithms was proposed, which replace the exponent $2$ in \eqref{eq:L2} with $p>2$. The $p$-Laplacian models were considered previously for other applications~\cite{bridle2013p,alamgir2011phase,el2016asymptotic, ZhoSch05}, and the $p\to\infty$ case, which is called Lipschitz learning, was considered in~\cite{kyng2015algorithms,luxburg2004distance}. The idea behind the $p$-Laplacian models is that the continuum variational problem is now the $p$-Dirichlet problem
\begin{equation}\label{eq:Lp}
\min_u \int_\Omega |\nabla u|^p \, dx,
\end{equation}
and for $p>d$ the Sobolev embedding $W^{1,p}(\Omega)\hookrightarrow C^{0,\alpha}(\bar{\Omega})$ allows the assignment of boundary values at isolated points. The $p$-Laplacian models, including the $p=\infty$ version, were proven to be well-posed in the limit of infinite unlabeled data and finite labeled data precisely when $p>d$ in \cite{calderLip2017,calder2017game,SleTho17plap}. The disadvantage of $p$-Laplacian models is that  the nonlinearity renders them more computationally challenging to solve, compared with standard Laplacian regularization. Other approaches include higher order Laplacian regularization \cite{BLSZ17, DSST18, zhou11} and using a spectral cut-off \cite{BelNiy03SSL}.

The approach most closely related to our work is the \emph{weighted nonlocal Laplacian} of Shi, Osher, and Zhu~\cite{shi2017weighted}, which replaces the learning problem \eqref{eq:L2} with
\begin{equation}\label{eq:L2W}
\min_{u:\X\to \R} \sum_{x\in \X\setminus \Gamma}\sum_{y\in \X}w_{xy}(u(x)- u(y))^2 + \mu\sum_{x\in \Gamma}\sum_{y\in \X}w_{xy}( g(x)   - u(y))^2,
\end{equation}
where $\mu>0$ is selected as the ratio of unlabeled to labeled data. The method increases the weights of edges adjacent to labels, which encourages the label function to be flat near labels. The authors show in \cite{shi2017weighted} that the method produces superior results, compared to the standard graph Laplacian, for classification with very few labels. Furthermore, since the method is a standard graph Laplacian with a modified weight matrix, it has similar computational complexity to Laplacian learning, and is fast compared to the non-linear $p$-Laplace methods, for example.  However, as we prove in this paper, the weighted nonlocal Laplacian of  \cite{shi2017weighted} becomes ill-posed (degenerate) in the limit of infinite unlabeled and finite labeled data. This is a direct consequence of Corollary \ref{cor:gen_neg}.  Numerical simulations in Section \ref{sec:numerics} illustrate the way in which the method becomes degenerate. The issue is the same as for Laplacian learning, since the weights are modified only locally near label points and the size of this neighborhood shrinks to zero in the large sample size limit.

\subsubsection{Properly weighted Laplacian}
In this paper, we show how to properly weight  the graph Laplacian so that it remains well-posed in the limit of infinite unlabeled and finite labeled data.  Our method, roughly speaking, modifies the problem to one of the form:
\begin{align}\label{eq:L2Wour}
\begin{split}
\te{Minimize } & \sum_{x,y\in \X} \gamma(x) w_{xy}(u(x) - u(y))^2 \quad \te{ over } {u:\X\to \R}, \\ 
\hspace*{-30pt} \te{subject to constraint } \quad  & \;u(x)=g(x) \te{ for  all }  x\in \Gamma 
\end{split}
\end{align}
where $\gamma(x) = \text{dist}(x,\Gamma)^{-\alpha}$ and $\alpha> d-2$ (see Section \ref{sec:main} for precise definitions). Here, we are modifying the weights not just of edges connecting to points of $\Gamma$, but also in a  neighborhood of $\Gamma$. We show that this model is stable as the number unlabeled data points increases to infinity, under appropriate scaling of the graph construction. In particular we show that the minimizers of the graph problem above converge as the number of unlabeled data points increases to the minimizer of 
a  ``continuum learning problem''. We give the precise assumptions on the discrete model below and describe the continuum problem in Section \ref{sec:elliptic}. Here we give a brief explanation as to why $\alpha> d-2$ is the natural scaling for the weight.

To illustrate what is happening near a labeled point, consider $\Gamma=\{0\}$ and take the domain from which the points are sampled to be  the unit ball $\Omega=B(0,1)$ in $\R^d$.  The  continuum variational problem corresponding to \eqref{eq:L2Wour} involves minimizing 
\begin{equation}\label{eq:cont}
I[u] = \int_{B(0,1)} |x|^{-\alpha}|\nabla u|^2\, dx.
\end{equation}
The Euler-Lagrange equation satisfied by minimizers of $I$ is
\begin{equation}\label{eq:EL}
\divv\left( |x|^{-\alpha}\nabla u \right)=0.
\end{equation}
This equation has a radial solution $u(x) = |x|^{\alpha+2-d}$, which is continuous at $x=0$ when $\alpha > d-2$. This suggests the solutions will assume this radial profile near labels, and the model will be well-posed for $\alpha>d-2$. Furthermore  when $\alpha \geq d-1$ one can expect  the solution to be Lipschitz near labels, and for $\alpha \geq d$ it is should be differentiable at the labels. It is important to point out that the proper weighting changes the degenerate limiting continuum problem to one that is well-posed with ``boundary'' data at isolated points. 
\medskip

We now provide a precise description of the properly-weighted graph Laplacian.
 
\subsection{Model and definitions} \label{sec:main}

 Let $\Omega\subset\R^d$ be open and bounded with a Lipschitz boundary. Let $\Gamma\subset \Omega$ be a finite collection of points along with a given label function $g:\Gamma\to\R$. 
 \red Let $\mu$  be a probability measure on $\Omega$ with continuous  density $\rho$ which is bounded from above and below by positive constants. \nc 
 Let $x_1,x_2,\cdots,x_n$ be independent and identically distributed random variables with distribution $\mu$, and let
\[X_n := \left\{x_1,x_2,\cdots,x_n\right\},\]
and $\X_n:= X_n\cup \Gamma$. To define the edge weights we use a radial kernel $\eta$ with profile
$\boldeta:[0,\infty)\to [0,\infty)$ which is nonincreasing, continuous at $0$ and satisfies 
\begin{equation}\label{eq:eta}
\begin{cases}
\boldeta(t) \geq 1,&\text{if }0 \leq  t \leq 1\\
\boldeta(t) =0,&\text{if } t > 2.
\end{cases}
\end{equation}
All of the results we state can be extended to kernels which decay sufficiently fast, in particular the Gaussian. For  $\eps>0$ we define the rescaled kernel
\begin{equation}\label{eq:weight}
\eta_\eps(x-y)= \frac{1}{\eps^d} \boldeta\left( \frac{|x-y|}{\eps} \right).
\end{equation}

We now introduce the penalization of the gradient, which is heavier near labeled points. Let $R>0$ be the minimum distance between pairs of points in $\Gamma$:
\begin{equation}\label{eq:Rdef}
R = \min \{ |x-y| \;:\; x,y \in \Gamma, \;\; x  \neq y \}. 
\end{equation}
 For $r_0>0$ and $\alpha\geq 0$ let $\gamma\in C^\infty(\bar{\Omega}\setminus \Gamma)$ be any function satisfying $\gamma\geq 1$ on $\Omega$ and 
\begin{equation}\label{eq:g}
\gamma(x) = 1 + \left( \frac{r_0}{\dist(x,\Gamma)} \right)^{\alpha} \text{ whenever } \dist(x,\Gamma)\leq \frac{R}{4},
\end{equation}
where $\dist(x,\Gamma)$ denotes the Euclidean distance from $x$ to the closest point in $\Gamma$. For $\zeta>1$ we set
\begin{equation}\label{eq:gn}
\gamma_\zeta(x) = \min\{\gamma(x),\zeta\}.
\end{equation}
For $u\in L^2(\X_n)$ we define the energy
\begin{equation} \label{eq:energy}
\GE_{n,\eps, \zeta}(u) = \frac{1}{n^2\eps^{2}}\sum_{x,y\in \X_n}\gamma_\zeta(x)\eta_\eps(x-y)|u(x)-u(y)|^2.
\end{equation}
The Laplacian learning problem is to
\begin{equation}\label{eq:LapLearn}
\te{minimize }\;  \GE_{n,\eps, \zeta}(u) \; \te{ over } \left\{ u\in L^2(\X_n) \text{ and } u=g \text{ on }  \Gamma \right\}.
\end{equation}
We note that the unique minimizer $u\in L^2(\X_n)$ of \eqref{eq:LapLearn} satisfies the optimality condition
\begin{equation}\label{eq:optimality}
\left\{\begin{aligned}
\L_{n,\eps, \zeta} u(x) &= 0,&&\text{if }x\in X_n \; (=\X_n \setminus \Gamma) \\ 
u(x) &=g(x),&&\text{if }x\in \Gamma,
\end{aligned}\right.
\end{equation}
where $\L_{n,\eps, \zeta}:L^2(\X_n)\to L^2(\X_n)$ is the graph Laplacian, given by
\begin{equation}\label{eq:graphlap}
\L_{n,\eps, \zeta}u(x) = \frac{1}{2n\eps^2}\sum_{y\in \X_n}(\gamma_\zeta(x) + \gamma_\zeta(y))\eta_{\eps}(x-y)(u(y) - u(x)).
\end{equation}

Some remarks about the model are in order.
\begin{remark}
 When considering the discrete functional,  $\zeta$  depends on $n$ and diverges to infinity (sufficiently fast) as $n \to \infty$. The constant $r_0$ represents
the length scale of the crossover from the strong local penalization near $\Gamma$ to uniform far-field penalization. The introduction of $\zeta$ is needed since $\gamma(x)=\infty$ on $\Gamma$ and so using $\gamma$ directly would impose a hard constraint on neighbors of labeled points. While we can allow $\zeta=\infty$ in our model by interpreting products $\infty \cdot 0$ as $0$, we wanted to allow for a model with far less stringent constraints on agreement with the labeled points in the immediate vicinity of $\Gamma$. We note that the critical distance to $\Gamma$, when $\gamma_\zeta$ crosses over from $\gamma$ to $\zeta$ equals
\begin{equation} \label{eq_rzeta}
 r_\zeta = r_0 (\zeta-1)^{-1/\alpha} \quad \te{ provided that }  r_0 (\zeta-1)^{-1/\alpha} < \frac{R}{4}.
 \end{equation} 
\end{remark}
\begin{remark}
In practice, one can take \eqref{eq:g} to be the definition of the weights on the whole domain $\Omega$. We only need $\gamma$ to be smooth for a part of our analysis in Section \ref{sec:ep}. The issue is  that since the distance function $d(x, \Gamma)$ is not differentiable  (it is only Lipschitz on $\Omega \setminus \Gamma$ if $\Gamma$ has more than one point), $\gamma$ cannot be both smooth and globally given by \eqref{eq:g}. To elaborate,
 $\gamma$ appears as part of the diffusion coefficient in the limiting elliptic problem (see Eq.~\eqref{eq:pde}). The solutions have nicer regularity properties when we take $\gamma$ to be smooth, away from the labels. For the other results we only need that $\gamma$ is bounded from below by a positive number and has singularities, with a particular growth rate, near the points of $\Gamma$.
\end{remark}
\begin{remark}\label{rem:modified}
Instead of truncating $\gamma$ at the radius $r_\zeta$ to construct the weights $\gamma_\zeta$, we can take a possibly discontinuous model of the form
\begin{equation}\label{eq:gammazr}
\gamma_{\zeta,r}(x) = 
\begin{cases}
\gamma(x),&\text{if }\text{dist}(x,\Gamma) > r\\
\zeta,&\text{if }\text{dist}(x,\Gamma) \leq r.
\end{cases}
\end{equation}
This model is more general, since we can set $r=r_\zeta$ to recover \eqref{eq:gn}. Choosing $\zeta\gg 1 + (r_0/r)^\alpha$ places a larger penalty on the gradient in the inner region where $\text{dist}(x,\Gamma)\leq r$, compared to Eq.~\eqref{eq:gn}. This model is useful in the analysis of the graph based problem, and gives a sharper result for continuity at the labels (see Remark \ref{rem:HC}). In the limit as $n\to \infty$ we would take $\zeta\to \infty$ and $r \to 0$ with $r \geq r_\zeta$. 
\end{remark}
\begin{remark}
We remark that the discrete functional \eqref{eq:energy} can be rewritten as
\begin{equation}\label{eq:stillsym}
\GE_{n,\eps, \zeta}(u) = \frac{1}{2n^2\eps^{2}}\sum_{x,y\in \X_n}(\gamma_\zeta(x) + \gamma_\zeta(y))\eta_\eps(x-y)|u(x)-u(y)|^2,
\end{equation}
and so the problem has a symmetric weight matrix.
\end{remark}

\subsection{Outline}
\red
The continuum properly-weighted Dirichlet energy, which describes the asymptotic behavior of the properly-weighted graph Laplacian \eqref{eq:LapLearn} is presented in Section \ref{sec:elliptic} (equations \eqref{eq:continuum} and \eqref{eq:Econtinuum}). To show that the continuum problem is well posed and to establish its basic properties,  in Section \ref{sec:elliptic} we also  study properties of singularly weighted Sobolev spaces. In particular the Trace Theorem \ref{thm:trace} plays a key role in showing that the data can be imposed on a set of isolated points, which enables us to show the well-posedness in Theorem  \ref{thm:varexist}. The Euler-Lagrange equation of the variational problem is the elliptic problem we study in Section \ref{sec:ep}. In particular we show that solutions are $C^2$ away from the labels and H\"older continuous globaly.

In Section \ref{sec:convergence} we turn to asymptotics of the graph-based problems. 
We prove in Theorem \ref{thm:conv} that the solutions of the graph-based learning problem  \eqref{eq:LapLearn}, for the properly-weighted Laplacian, converge in the large sample size limit to the solution of a continuum variational problem \eqref{eq:continuum}-\eqref{eq:Econtinuum}.
We achieve this by showing the $\Gamma$-convergence of the discrete variational problems to the corresponding continuum problem.
We also prove a negative result, showing that the nonlocal weighted Laplacian \cite{shi2017weighted} is degenerate (ill-posed) in the large data limit (with fixed number of labeled points). 
In Section \ref{sec:near} we prove that solutions of the graph-based learning problem for the properly-weighted Laplacian attain their labeled values continuously with high probability (Theorem \ref{thm:holder}).  
In Section \ref{sec:numerics} we present the results of numerical simulations illustrating the 
estimators obtained by our method, and its performance in classification tasks on synthetic 
data and  in classifying handwritten digits from the MNIST dataset \cite{lecun1998gradient}. The classification problems on synthetic data contrast the stability of the properly-weighted Laplacian with the instability of the standard graph Laplacian and related methods. The MNIST experiments show superior performance of our method compared to the standard graph Laplacian, and similar performance to the  weighted Laplacian of \cite{shi2017weighted}. In the Appendix \ref{sec:Back} we recall some background results used and show and auxiliary technical result.
\nc

\subsection{Acknowledgements}

Calder was supported by NSF grant DMS:1713691. Slep\v{c}ev acknowledges the NSF support (grants DMS-1516677 and DMS-1814991). He is grateful to University of Minnesota, where this project started, for hospitality. He is also grateful to the Center for Nonlinear Analysis of CMU for its support. Both authors thank the referees for valuable suggestions. 

\section{Analysis of the continuum problem}
\label{sec:elliptic}

The continuum variational problem corresponding to the graph-based problem \eqref{eq:LapLearn} is
\begin{equation}\label{eq:continuum}
\te{minimize }  \E(u)\, \te{ over }  \, \left\{  u\in H^1_\gamma(\Omega)  \text{ and } u=g \text{ on }  \Gamma \right\},
\end{equation}
where $\E$ is given by
\begin{equation}\label{eq:Econtinuum}
\E(u) = \frac{1}{2}\int_\Omega \gamma |\nabla u|^2 \rho^2\, dx,
\end{equation}
\red $\rho$  is continuous and  bounded from above and below by positive constants, \nc
and the weighted Sobolev Space $H^1_\gamma(\Omega) $ is defined by \eqref{eq:H1gamma}. 
It follows from Lemma \ref{lem:lebesgue} that
  for $\gamma$ which grow near points of $\Gamma$ as fast as or faster than $\dist(x, \Gamma)^{-\alpha}$, the functions in $H^1_\gamma(\Omega)$ have a trace at $\Gamma$ (defined by \eqref{eq:trace_def}), which enables one to assign the condition $u=g$ on $\Gamma$ in \eqref{eq:continuum}.

The Euler-Lagrange equation satisfied by minimizers of \eqref{eq:Econtinuum} is the elliptic equation
\begin{equation}\label{eq:pde}
\left\{\begin{aligned}
-\divv(\gamma \rho^2 \nabla u) &= 0&&\text{in }\Omega\setminus \Gamma\\ 
u &=g&&\text{on }\Gamma\\
\frac{\partial u}{\partial \nu} &=0&&\text{on }\partial \Omega.
\end{aligned}\right.
\end{equation}
In this section we study the variational problem \eqref{eq:continuum} and the elliptic problem \eqref{eq:pde} rigorously.  The theory is nonstandard due to the boundary condition $u=g$ on $\Gamma$, since  $\Gamma$  is a collection of isolated points and does not satisfy an exterior sphere condition. As a consequence of this analysis, we prove in Section \ref{sec:near} that solutions of the graph-based problem are continuous at the labels.

Before studying  this problem, we need to perform a careful analysis of a particular weighted Sobolev space.

\subsection{Weighted Sobolev spaces}
\label{sec:weighted}

In this section we study the Sobolev space with norm weighted by $\gamma$. While there exists a rich literature on Weighted Sobolev Spaces, we did not find the precise results we need. Below we develop a self-contained, but brief, description of the spaces with particular weights of interest.

For $u\in H^1(\Omega)$  we define
\begin{equation}\label{eq:h1semi}
[u]^2_{H^1_\gamma(\Omega)} = \int_{\Omega}\gamma|\nabla u|^2\, dx,
\end{equation}
and
\begin{equation}\label{eq:h1}
\|u\|^2_{H^1_\gamma(\Omega)} = \|u\|_{L^2(\Omega)}^2 + [u]^2_{H^1_\gamma(\Omega)}. 
\end{equation}
We define
\begin{equation}\label{eq:H1gamma}
H^1_\gamma(\Omega) = \left\{ u \in H^1(\Omega) \, : \, \|u\|_{H^1_\gamma(\Omega)}< \infty \right\},
\end{equation} 
and endow $H^1_\gamma(\Omega)$ with the norm $\|u\|_{H^1_\gamma(\Omega)}$. We also denote by $H^1_{\gamma,0}(\Omega)$ the closure of $C^\infty_c(\bar{\Omega}\setminus \Gamma)$ in $H^1_\gamma(\Omega)$. The space $H^1_\gamma(\Omega)$ is the natural function space on which to pose the variational problem \eqref{eq:continuum}.

Throughout this section we let $B_r$ denote the open ball of radius $r>0$ centered at the origin in $\R^d$. Whenever we consider the space $H^1_\gamma(B_r)$, we will implicitly assume the choice of $\gamma(x) = |x|^{-\alpha}$. Hence
\begin{equation}\label{eq:h1semib}
[u]^2_{H^1_{\gamma}(B_r)} = \int_{B_r}|\nabla u|^2|x|^{-\alpha}\, dx.
\end{equation}
\red In all other occurrences, $\gamma$ is defined as in Section \ref{sec:main}, and in particular we always assume \eqref{eq:g} holds.\nc
We also use the notation $(u)_{x,r} = \dashint_{B(x,r)}u\, dx$ for the average of $u$ over the ball $B(x,r)$, and $(u)_r := (u)_{0,r}$. We also assume in this section that $\Omega$ has a Lipschitz boundary.

First, we study the trace of $H^1_\gamma(\Omega)$ functions on $\Gamma$. Before proving a general trace theorem, we require a preliminary lemma.
\begin{lemma}\label{lem:lebesgue}
Let $\alpha>d-2$ and $u\in H^1_\gamma(B_r)$. Then $x=0$ is a Lebesgue point~\cite{rudin2006real} for $u$, i.e., $u(0):=\lim_{\eps\to 0}(u)_\eps$ exists, and 
\begin{equation}\label{eq:lim}
|u(0) -(u)_\eps| \leq C \eps^{(\alpha + 2-d)/2}[u]_{H^1_\gamma(B_\eps)}
\end{equation}
for all $0 < \eps \leq r$.
\end{lemma}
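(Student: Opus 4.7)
The plan is to prove the estimate via a dyadic/telescoping argument using Poincaré's inequality and Cauchy–Schwarz, turning the weighted gradient energy into the desired H\"older-type rate. Since $H^1_\gamma(B_r)\subset H^1(B_r)$, the averages $(u)_s$ are well-defined for all $0<s\le r$, and the existence of $u(0)$ will be obtained by showing that $s\mapsto (u)_s$ is Cauchy as $s\downarrow 0$ with an explicit rate.

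First I would prove the key one-step estimate: for every $0<\eps\le r$,
\[
|(u)_\eps - (u)_{\eps/2}| \;\le\; C\eps^{(\alpha+2-d)/2}\,[u]_{H^1_\gamma(B_\eps)}.
\]
The derivation has three short ingredients. Since $B_{\eps/2}\subset B_\eps$,
\[
|(u)_\eps - (u)_{\eps/2}| \le \frac{1}{|B_{\eps/2}|}\int_{B_{\eps/2}}|u-(u)_\eps|\,dx \le C\eps^{-d}\int_{B_\eps}|u-(u)_\eps|\,dx.
\]
Cauchy–Schwarz in $L^2(B_\eps)$ followed by the (unweighted) Poincar\'e inequality on $B_\eps$ gives
\[
\int_{B_\eps}|u-(u)_\eps|\,dx \le C\eps^{d/2}\|u-(u)_\eps\|_{L^2(B_\eps)} \le C\eps^{d/2+1}\|\nabla u\|_{L^2(B_\eps)}.
\]
Finally, since $|x|^\alpha\le\eps^\alpha$ on $B_\eps$, one has the pointwise bound $|\nabla u|^2\le \eps^\alpha|\nabla u|^2|x|^{-\alpha}$, which integrates to
\[
\|\nabla u\|_{L^2(B_\eps)} \le \eps^{\alpha/2}\,[u]_{H^1_\gamma(B_\eps)}.
\]
Combining the three inequalities yields the one-step estimate with $\beta:=(\alpha+2-d)/2>0$ (here the hypothesis $\alpha>d-2$ is used).

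With the one-step estimate in hand, I would set $r_k=\eps/2^k$ and telescope. Since $[u]_{H^1_\gamma(B_{r_k})}\le [u]_{H^1_\gamma(B_\eps)}$ (the integrand is nonnegative and the domain shrinks), summing yields
\[
\sum_{k=0}^\infty |(u)_{r_k}-(u)_{r_{k+1}}| \le C[u]_{H^1_\gamma(B_\eps)}\sum_{k=0}^\infty r_k^{\beta} = C\eps^\beta [u]_{H^1_\gamma(B_\eps)}\sum_{k=0}^\infty 2^{-k\beta},
\]
and the geometric series converges because $\beta>0$. Hence $\{(u)_{r_k}\}$ is Cauchy; denote its limit by $u(0)$. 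The same one-step bound, now applied to a general $0<\eps'\le\eps$ and telescoped through the dyadic sequence starting at $\eps$, upgrades this to
\[
|u(0)-(u)_\eps|\le C\eps^\beta [u]_{H^1_\gamma(B_\eps)}
\]
for every $0<\eps\le r$, which is \eqref{eq:lim}. A brief final step verifies that $u(0)$ as defined really is the Lebesgue-point value: from $\|u-(u)_\eps\|_{L^2(B_\eps)}\le C\eps^{1+\alpha/2}[u]_{H^1_\gamma(B_\eps)}$ one gets $\dashint_{B_\eps}|u-u(0)|\,dx\to 0$, so $0$ is a Lebesgue point in the usual sense.

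There is no major obstacle here; the only delicate point is making sure no unjustified smoothness of $u$ is used. I expect the estimates above to go through directly for $u\in H^1_\gamma(B_r)$ since both Poincar\'e on a ball and the pointwise weight bound require only $H^1$-regularity. Should any doubt arise about applying Poincar\'e or manipulating averages, one can pass through standard mollifications $u*\phi_\delta$ on $B_{r-\delta}$ (on which the weight $|x|^{-\alpha}$ is integrable against the mollified gradient on each fixed annulus), derive the estimate for the smooth approximants with constants independent of $\delta$, and then send $\delta\to 0$ using lower semicontinuity of $[\,\cdot\,]_{H^1_\gamma(B_\eps)}$ and pointwise convergence of the averages $(u*\phi_\delta)_\eps\to (u)_\eps$.
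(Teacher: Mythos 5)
Your proposal is correct and follows essentially the same route as the paper: the unweighted gradient is controlled by $\eps^{\alpha/2}[u]_{H^1_\gamma(B_\eps)}$ via the pointwise weight bound, Poincar\'e converts this into decay of the oscillation of the averages, and a telescoping sum over geometrically decaying radii (convergent precisely because $\alpha>d-2$) yields the Cauchy property and the rate \eqref{eq:lim}. The only cosmetic difference is bookkeeping: you telescope over exact dyadic radii and patch in general radii with one extra comparison step, whereas the paper adaptively picks a ratio $b\in[2,4]$ so the geometric sequence lands exactly on the target radius.
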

\begin{proof}
We compute
\[\int_{B_\eps} |\nabla u|^2 \, dx \leq \int_{B_\eps}|\nabla u|^2|x|^{-\alpha}\eps^\alpha\, dx \leq [u]^2_{H^1_\gamma(B_\eps)}\eps^\alpha.\]
By the Poincar\'e inequality we have
\begin{equation}\label{eq:poin}
\dashint_{B_\eps}(u - (u)_\eps)^2\, dx \leq Cr^2\dashint_{B_\eps}|\nabla u|^2\, dx \leq C[u]^2_{H^1_\gamma(B_\eps)}\eps^{\alpha+2-d}.
\end{equation}
For $0 < s < t \leq r$ we apply \eqref{eq:poin} with $s$ and $t$ in place of $\eps$ to obtain
\begin{align}\label{eq:basicineq}
s^d((u)_s - (u)_t)^2 &\leq C\int_{B_s}((u)_s - (u)_t)^2 \, dx\\
&\leq C\int_{B_s}((u)_s - u)^2 \, dx + C\int_{B_s}((u)_t - u)^2\, dx\notag \\
&\leq C[u]^2_{H^1_\gamma(B_t)}(s^{\alpha + 2} + t^{\alpha + 2}).\notag
\end{align}
For $0 < q < \eps \leq r$ with $\eps \leq 4q$ we can set $s=q$ and $t=\eps$ above to obtain
\begin{equation}\label{eq:case1}
|(u)_q - (u)_\eps|^2 \leq C[u]^2_{H^1_\gamma(B_\eps)}\eps^{\alpha+2-d}.
\end{equation}

For $0 < q < \eps \leq r$ with $\eps > 4q$, let $k\in \N$ be the greatest integer smaller than $\log_2(\eps/q)$. Since $\eps> 2q$, we have $k\geq 1$. Choose $b>0$ so that $b^k = \eps/q$. Then
\[\log(b) = \frac{\log(\eps/q)}{k}\geq \frac{\log(\eps/q)}{\log_2(\eps/q)} \geq \log(2)\]
and
\[\log(b) \leq \frac{\log(\eps/q)}{\log_2(\eps/q)-1} = \frac{\log(2)\log_2(\eps/q)}{\log_2(\eps/q)-1}\leq 2\log(2),\]
since $\log_2(\eps/q)>2$. Therefore $2 \leq b \leq 4$. Let us set $\eps_j = \eps b^{-j}$ and $a_j = (u)_{\eps_j}$. Then $\eps_0 = \eps$ and $\eps_k = \eps b^{-k} = q$. Setting $t=\eps_j$ and $s = \eps_{j+1}$ in \eqref{eq:basicineq} yields
\[|a_j - a_{j+1}|^2 \leq C[u]^2_{H^1_\gamma(B_\eps)}(\eps_{j+1}^{\alpha+2-d} + \eps_j^{\alpha+2}\eps_{j+1}^{-d}) \leq C[u]^2_{H^1_\gamma(B_\eps)} b^{-j(\alpha+2-d)}\eps^{\alpha+2-d}.  \]
Therefore
\begin{equation}\label{eq:avg_bound}
|(u)_q - (u)_\eps|\leq \sum_{j=0}^{k-1}|a_{j+1}-a_j| \leq C[u]_{H^1_\gamma(B_\eps)}\eps^{(\alpha+2-d)/2}
\end{equation}
holds for all $k\geq 1$, where $C$ is independent of $u$, $\eps$ and $k$. 

In either case, we have established that
\begin{equation}\label{eq:cauchy}
|(u)_q - (u)_\eps|^2 \leq C[u]^2_{H^1_\gamma(B_\eps)} \eps^{\alpha + 2-d}
\end{equation}
holds for all $0 < q < \eps \leq r$. Thus, the sequence $\eps\mapsto (u)_\eps$ is Cauchy and converges to a real number as $\eps\to 0$. Sending $q\to 0$ in \eqref{eq:cauchy} completes the proof.
\end{proof}

By Lemma \ref{lem:lebesgue}, we can define the trace operator $\Tr:H^1_\gamma(\Omega) \to \R^\Gamma$ by
\begin{equation}\label{eq:trace_def}
\Tr[u](x) = \lim_{\eps\to 0}\dashint_{B(x,\eps)} u\, dx \ \ \ \ (x\in \Gamma).
\end{equation}
We endow $\R^\Gamma$ with the Euclidean norm.  We now prove our main trace theorem.
\begin{theorem}[Trace Theorem]\label{thm:trace}
\red Let $\alpha>d-2$ and assume $\gamma$ satisfies \eqref{eq:g} and $\Omega$ has a Lipschitz boundary. \nc Then the trace operator $\Tr:H^1_\gamma(\Omega) \to \R^\Gamma$ is bounded, and satisfies $\Tr[u](x) = u(x)$ whenever $u$ is continuous at $x\in \Gamma$. Furthermore, for every $u,v\in H^1_\gamma(\Omega)$ with $\|u-v\|_{L^2(\Omega)}^{2/(\alpha+2)}\leq R/2$ we have
\begin{equation}\label{eq:traceest}
|\Tr[u] - \Tr[v]|\leq C(1+[u]_{H^1_\gamma(\Omega)} + [v]_{H^1_\gamma(\Omega)})\|u-v\|_{L^2(\Omega)}^{1-d/(\alpha+2)}.
\end{equation}
\end{theorem}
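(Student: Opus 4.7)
The plan is to reduce all three assertions (boundedness, continuity compatibility, and the quantitative estimate) to applications of Lemma \ref{lem:lebesgue} at each labeled point. First I would set up the key local comparison: for any $x\in\Gamma$, the ball $B(x,R/4)$ contains no other point of $\Gamma$, so for $y\in B(x,R/4)$ we have $\dist(y,\Gamma)=|y-x|$, whence $\gamma(y)\geq r_0^\alpha|y-x|^{-\alpha}$ by \eqref{eq:g}. Thus for any $\eps\in(0,R/4]$,
\[
\int_{B(x,\eps)}|\nabla u|^2|y-x|^{-\alpha}\,dy \;\leq\; r_0^{-\alpha}\,[u]_{H^1_\gamma(\Omega)}^2,
\]
and Lemma \ref{lem:lebesgue} applied to the translate $u(x+\,\cdot\,)$ gives the basic oscillation bound
\[
|\Tr[u](x)-(u)_{x,\eps}|\;\leq\; C r_0^{-\alpha/2}\,\eps^{(\alpha+2-d)/2}\,[u]_{H^1_\gamma(\Omega)},
\]
valid uniformly in $x\in\Gamma$ and $\eps\in(0,R/4]$. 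The continuity compatibility is then immediate: if $u$ is continuous at $x\in\Gamma$, then $(u)_{x,\eps}\to u(x)$ as $\eps\to 0$, and by construction $(u)_{x,\eps}\to \Tr[u](x)$, so the two limits coincide.

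For boundedness, I would fix $\eps=R/4$ in the estimate above and bound $|(u)_{x,R/4}|\leq C_R\|u\|_{L^2(\Omega)}$ by Cauchy--Schwarz. This yields $|\Tr[u](x)|\leq C\,\|u\|_{H^1_\gamma(\Omega)}$, and summing over the finite set $\Gamma$ shows that $\Tr$ is a bounded linear map into $\R^\Gamma$. For the quantitative estimate \eqref{eq:traceest}, I would apply the same machinery to $w=u-v$ but choose $\eps$ depending on $\|w\|_{L^2(\Omega)}$ to balance the two error contributions. Specifically, with $\eps=\tfrac12\|w\|_{L^2(\Omega)}^{2/(\alpha+2)}$, the hypothesis $\|u-v\|_{L^2(\Omega)}^{2/(\alpha+2)}\leq R/2$ guarantees $\eps\leq R/4$ so the local comparison above applies. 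Estimating
\[
|\Tr[w](x)|\;\leq\;|\Tr[w](x)-(w)_{x,\eps}|+|(w)_{x,\eps}|\;\leq\;C\eps^{(\alpha+2-d)/2}[w]_{H^1_\gamma(\Omega)}+C\eps^{-d/2}\|w\|_{L^2(\Omega)},
\]
and substituting the chosen $\eps$, both terms collapse to the common power $\|w\|_{L^2(\Omega)}^{1-d/(\alpha+2)}$; using $[u-v]_{H^1_\gamma(\Omega)}\leq [u]_{H^1_\gamma(\Omega)}+[v]_{H^1_\gamma(\Omega)}$ and summing over the finite set $\Gamma$ gives \eqref{eq:traceest}.

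Since Lemma \ref{lem:lebesgue} already does the heavy lifting on the dyadic-averaging argument, there is no substantial analytic obstacle. The only slightly delicate point is bookkeeping the exponents: one must verify that the choice $\eps\sim\|w\|_{L^2}^{2/(\alpha+2)}$ really does equalize the $\eps^{(\alpha+2-d)/2}$ and $\eps^{-d/2}$ scalings, producing the exponent $1-d/(\alpha+2)$ in \eqref{eq:traceest}, and that the explicit constraint $\|u-v\|_{L^2(\Omega)}^{2/(\alpha+2)}\leq R/2$ is exactly what is needed for $\eps\leq R/4$ so that the local comparison $\gamma(y)\geq r_0^\alpha|y-x|^{-\alpha}$ can be invoked.
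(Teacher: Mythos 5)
Your proposal is correct and follows essentially the same route as the paper: both reduce everything to the oscillation bound of Lemma \ref{lem:lebesgue} (after comparing $\gamma$ with $|y-x|^{-\alpha}$ near each labeled point), fix $\eps$ of order $R$ for boundedness, and choose $\eps\sim\|u-v\|_{L^2(\Omega)}^{2/(\alpha+2)}$ to balance the $\eps^{(\alpha+2-d)/2}$ and $\eps^{-d/2}$ terms in the quantitative estimate. The only cosmetic difference is that you apply the lemma to $w=u-v$ and use subadditivity of the seminorm, whereas the paper applies it to $u$ and $v$ separately and uses a three-term triangle inequality; the exponent bookkeeping and the role of the constraint $\|u-v\|_{L^2(\Omega)}^{2/(\alpha+2)}\leq R/2$ are handled correctly.
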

\begin{proof}
By Lemma \ref{lem:lebesgue} each $x\in \Gamma$ is a Lebesgue point of $u$, and we have for $r \leq R/2$
\[|\Tr[u](x)|  \leq \dashint_{B(x,r)}u\,dx + C r^{(\alpha + 2-d)/2}[u]_{H^1_\gamma(\Omega)} \leq Cr^{-d/2}\|u\|_{L^2(\Omega)} + Cr^{(\alpha + 2-d)/2}\|u\|_{H^1_\gamma(\Omega)},\]
where $R>0$ is defined in \eqref{eq:Rdef}.
Fixing $r=R/2$ we have $|\Tr[u](x)|\leq C\|u\|_{H^1_\gamma(\Omega)}$, hence $\Tr:H^1_\gamma(\Omega)\to \R$ is bounded.

To prove \eqref{eq:traceest}, let $u,v \in H^{1}_\gamma(\Omega)$ and $x\in \Gamma$. For simplicity, we write $u(x)$ and $v(x)$ for $\Tr[u](x)$ and $\Tr[v](x)$, respectively. By Lemma \ref{lem:lebesgue} we have for $0 < \eps \leq R/2$
\begin{align*}
|u(x)-v(x)| &\leq \left|u(x) -(u)_{x,\eps}\right| + \left|v(x) - (v)_{x,\eps}\right| + \left|(u)_{x,\eps} - (v)_{x,\eps} \right|\\
&\leq C\eps^{(\alpha + 2-d)/2}([u]_{H^1_\gamma(B(x,\eps))} + [v]_{H^1_\gamma(B(x,\eps))}) +C\eps^{-d}\int_{B_\eps}|u-v|\, dx\\
&\leq C\eps^{(\alpha + 2-d)/2}([u]_{H^1_\gamma(\Omega)} + [v]_{H^1_\gamma(\Omega)}) +C\eps^{-d/2}\|u-v\|_{L^2(\Omega)}.
\end{align*}
Choosing $\eps =\|u-v\|_{L^2(\Omega)}^{2/(\alpha+2)}$, we obtain 
\[|u(x) - v(x)|\leq C(1+[u]_{H^1_\gamma(\Omega)} + [v]_{H^1_\gamma(\Omega)})\|u-v\|_{L^2(\Omega)}^{1-d/(\alpha+2)},\]
provided $\eps\leq R/2$. 
\end{proof}

We now examine the decay of the $L^2$ norm of trace zero functions.
\begin{lemma}\label{lem:decay}
Let $\alpha>d-2$ and $u\in H^1_\gamma(B_r)$ with $\Tr[u](0)=0$. Then 
\begin{equation}\label{eq:decay1}
\dashint_{\partial B_\eps}u^2 \, dS + \dashint_{B_\eps}u^2 \, dx \leq C \eps^{\alpha + 2-d}[u]^2_{H^1_\gamma(B_\eps)}
\end{equation}
for all $0 < \eps \leq r$.
\end{lemma}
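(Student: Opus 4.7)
The plan is to reduce everything to estimates on the plain averaged integrals by combining Lemma \ref{lem:lebesgue} (which controls the average $(u)_\eps$ in terms of $[u]_{H^1_\gamma}$) with a Poincaré inequality and the trivial pointwise bound $|x|^{-\alpha}\geq \eps^{-\alpha}$ on $B_\eps$. Since $\Tr[u](0)=0$, Lemma \ref{lem:lebesgue} yields
\[
|(u)_\eps|^2 \leq C\eps^{\alpha+2-d}[u]^2_{H^1_\gamma(B_\eps)},
\]
so $|B_\eps|(u)_\eps^2 \leq C\eps^{\alpha+2}[u]^2_{H^1_\gamma(B_\eps)}$.

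First I would handle the volume average. Write $u=(u-(u)_\eps)+(u)_\eps$ and apply the Poincaré inequality on $B_\eps$:
\[
\int_{B_\eps}(u-(u)_\eps)^2\,dx \leq C\eps^2\int_{B_\eps}|\nabla u|^2\,dx \leq C\eps^{\alpha+2}[u]^2_{H^1_\gamma(B_\eps)},
\]
where in the last step I used $|\nabla u|^2 = |\nabla u|^2|x|^{-\alpha}|x|^{\alpha}\leq \eps^{\alpha}|\nabla u|^2|x|^{-\alpha}$ on $B_\eps$. Combining with the bound on $|B_\eps|(u)_\eps^2$ and dividing by $|B_\eps|\sim\eps^d$ gives the $\dashint_{B_\eps} u^2\,dx$ estimate.

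For the spherical average I would use the standard scaled trace inequality on $B_\eps$, namely
\[
\int_{\partial B_\eps}u^2\,dS \leq C\left(\eps^{-1}\int_{B_\eps}u^2\,dx + \eps\int_{B_\eps}|\nabla u|^2\,dx\right),
\]
which follows from the trace theorem on $B_1$ via the rescaling $v(y)=u(\eps y)$. Both terms on the right are bounded by $C\eps^{\alpha+1}[u]^2_{H^1_\gamma(B_\eps)}$ by the estimates above, and dividing by $|\partial B_\eps|\sim\eps^{d-1}$ gives the required $\eps^{\alpha+2-d}$ scaling.

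There is no essential obstacle; the mildly delicate point is just bookkeeping the powers of $\eps$ correctly so that the weight $|x|^{-\alpha}$ contributes an extra $\eps^{\alpha}$ when traded for the unweighted gradient, and ensuring that the (trivial) bound on $(u)_\eps^2$ from Lemma \ref{lem:lebesgue} has the same order as the Poincaré term so that neither dominates.
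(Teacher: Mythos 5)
Your proposal is correct and follows essentially the same route as the paper: the interior estimate via Lemma \ref{lem:lebesgue} plus the Poincar\'e inequality (with the pointwise trade $|\nabla u|^2\leq \eps^{\alpha}|x|^{-\alpha}|\nabla u|^2$ on $B_\eps$), and the boundary term via the scaled trace inequality, which the paper derives explicitly by applying the divergence theorem to $xu^2$ but which is the identical estimate. The power bookkeeping in your argument is consistent throughout.
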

\begin{proof}
Since $\Tr[u](0) =0$, Lemma \ref{lem:lebesgue} yields
\[\|(u)_\eps\|_{L^2(B_\eps)}\leq C\eps^{d/2}|(u)_{\eps}| \leq C\eps^{(\alpha+2)/2}[u]_{H^1_{\gamma}(B_\eps)}.\]
Recalling \eqref{eq:poin} from the proof of Lemma \ref{lem:lebesgue} we deduce
\[\|u - (u)_\eps\|_{L^2(B_\eps)} \leq C\eps^{(\alpha+2)/2}[u]_{H^1_\gamma(B_\eps)}.\]
Therefore
\[\dashint_{B_\eps}u^2 \, dx = C\eps^{-d}\|u\|^2_{L^2(B_\eps)}\leq C\eps^{\alpha+2-d}[u]^2_{H^1_\gamma(B_\eps)},\]
which establishes one part of \eqref{eq:decay1}.

For the other part, we use a standard trace estimate that we include for completeness. We have
\begin{align*}
\eps\int_{\partial B_\eps}u^2 \, dS&=\int_{B_\eps}\divv(x u^2)\, dx\\
&=\int_{B_\eps}du^2 + 2u\nabla u\cdot x \, dx\\
&\leq C\int_{B_\eps}u^2 \, dx + C\eps^2\int_{B_\eps}|\nabla u|^2\, dx\\
&\leq C\int_{B_\eps}u^2 \, dx + C\eps^{\alpha+2}[u]_{H^1_\gamma(B_\eps)}^2.
\end{align*}
Dividing both sides by $\eps^d$ we obtain
\[\dashint_{\partial B_\eps}u^2 \, dS\leq C\dashint_{B_\eps}u^2 \, dx + C\eps^{\alpha+2-d}[u]_{H^1_\gamma(B_\eps)}^2,\]
which completes the proof.
\end{proof}

We now show that trace zero functions can be approximated in $H^1_\gamma(\Omega)$ by smooth functions compactly supported away from $\Gamma$. 
\begin{theorem}[Trace zero functions]\label{thm:tracezero}
Let  $\alpha> d-2$ and \red assume $\gamma$ satisfies \eqref{eq:g} and $\Omega$ has a Lipschitz boundary. \nc Then $u\in H^1_{\gamma,0}(\Omega)$ if and only if $u\in H^1_\gamma(\Omega)$ and $\Tr[u]=0$.
\end{theorem}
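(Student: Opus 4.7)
The forward direction is routine: if $u \in H^1_{\gamma,0}(\Omega)$, take an approximating sequence $u_n \in C^\infty_c(\bar\Omega \setminus \Gamma)$. Each $u_n$ vanishes in a neighborhood of $\Gamma$, so $\Tr[u_n] = 0$ trivially. By the continuity estimate \eqref{eq:traceest} in Theorem \ref{thm:trace}, $\Tr[u_n] \to \Tr[u]$ in $\R^\Gamma$, so $\Tr[u] = 0$.

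For the reverse direction, suppose $u \in H^1_\gamma(\Omega)$ with $\Tr[u] = 0$. The plan is to first approximate $u$ by functions that vanish in a neighborhood of $\Gamma$ via a cutoff, and then to smooth these functions using a standard density result on Lipschitz domains. Fix a smooth profile $\phi:[0,\infty)\to[0,1]$ with $\phi(s)=0$ for $s\leq 1$ and $\phi(s)=1$ for $s\geq 2$, and for $\delta \in (0, R/8)$ set
\begin{equation*}
\varphi_\delta(x) = \phi\!\left(\dist(x,\Gamma)/\delta\right), \qquad v_\delta := \varphi_\delta u.
\end{equation*}
Then $v_\delta$ vanishes on $\{\dist(x,\Gamma)\leq \delta\}$. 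I claim $v_\delta \to u$ in $H^1_\gamma(\Omega)$ as $\delta \to 0$. The $L^2$ piece and the term $\int \gamma (1-\varphi_\delta)^2 |\nabla u|^2$ tend to zero by dominated convergence (since $\gamma|\nabla u|^2 \in L^1$ and $1-\varphi_\delta \to 0$ pointwise on $\Omega\setminus\Gamma$, which has full measure). The crux is the cross term
\begin{equation*}
\int_\Omega \gamma\, u^2\, |\nabla \varphi_\delta|^2 \, dx \;\leq\; \frac{C}{\delta^2}\sum_{p\in\Gamma}\int_{A_{p,\delta}} \gamma\, u^2 \, dx,
\end{equation*}
where $A_{p,\delta}=\{\delta \leq |x-p|\leq 2\delta\}$. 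Since $\gamma(x)\leq C\delta^{-\alpha}$ on $A_{p,\delta}$ (by \eqref{eq:g} and the choice of $\delta<R/8$), this is bounded by $C\delta^{-\alpha-2}\sum_{p}\int_{B_{2\delta}(p)} u^2\,dx$. Because $\Tr[u](p)=0$ for every $p\in\Gamma$, Lemma \ref{lem:decay} (applied after translation to each $p$) gives $\int_{B_{2\delta}(p)} u^2 \,dx \leq C\delta^{\alpha+2}[u]^2_{H^1_\gamma(B_{2\delta}(p))}$. The $\delta^{\alpha+2}$ cancels, leaving
\begin{equation*}
\int_\Omega \gamma\, u^2\, |\nabla \varphi_\delta|^2 \, dx \;\leq\; C\sum_{p\in\Gamma}[u]^2_{H^1_\gamma(B_{2\delta}(p))} \;\to\; 0
\end{equation*}
as $\delta \to 0$, by absolute continuity of the integral defining $[u]_{H^1_\gamma}$. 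This is the main technical step, and the whole construction of $\varphi_\delta$ was chosen so that the singular factor $\gamma\sim\dist(\cdot,\Gamma)^{-\alpha}$ balances exactly against the decay given by the trace-zero Poincar\'e-type inequality.

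It remains to show each $v_\delta$ lies in the closure of $C^\infty_c(\bar\Omega\setminus\Gamma)$. Since $v_\delta$ is supported in $\{\dist(x,\Gamma)\geq \delta\}$, on its support $\gamma$ is bounded above and below, so $v_\delta \in H^1(\Omega)$ with norm equivalent to $H^1_\gamma$. Because $\Omega$ has a Lipschitz boundary, the Meyers--Serrin/extension theorem gives a sequence in $C^\infty(\bar\Omega)$ approximating $v_\delta$ in $H^1(\Omega)$; multiplying by a fixed cutoff that is $1$ on $\{\dist(x,\Gamma)\geq \delta/2\}$ and $0$ on $\{\dist(x,\Gamma)\leq \delta/4\}$ preserves this approximation and forces compact support in $\bar\Omega\setminus\Gamma$. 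Combining with the previous step via a diagonal argument produces a sequence in $C^\infty_c(\bar\Omega\setminus\Gamma)$ converging to $u$ in $H^1_\gamma(\Omega)$, i.e., $u \in H^1_{\gamma,0}(\Omega)$.
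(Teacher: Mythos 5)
Your proof is correct and follows essentially the same route as the paper: the forward direction via the continuity of the trace operator, and the reverse direction via the cutoff $u(1-\xi_k)$ at scale $\delta\sim 1/k$ around $\Gamma$, with the cross term $\delta^{-\alpha-2}\int_{B_{2\delta}}u^2$ controlled by Lemma \ref{lem:decay} and the remaining terms killed by absolute continuity of the weighted Dirichlet integral, followed by mollification. Your treatment of the final smoothing step is somewhat more careful than the paper's one-line remark, but the argument is the same.
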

\begin{proof}
If $u\in H^1_{\gamma,0}(\Omega)$, then there exists $u_k\in C^\infty_c(\bar{\Omega}\setminus \Gamma)$ so that $u_k\to u$ in $H^1_\gamma(\Omega)$. In particular, $u_k$ is uniformly bounded in $H^1_\gamma(\Omega)$. Thus, by Theorem \ref{thm:trace}, we have $\Tr[u](x)= \lim_{k\to \infty}u_k(x) = 0$ for each $x\in \Gamma$.

Conversely, let $u\in H^1_\gamma(\Omega)$ such that $\Tr[u]=0$. Without loss of generality, we may assume $\Omega=B_r$, $\Gamma=\{0\}$, and $\Tr[u](0)=0$. Choose a smooth nonincreasing function $\xi:[0,\infty)\to [0,1]$ such that $\xi(t)=1$ for $0 \leq t \leq 1$ and $\xi(t)=0$ for $t\geq 2$. For a positive integer $k\geq 1$ define $\xi_k(x) = \xi(k|x|)$ and $w_k = u(1-\xi_k)$. We compute
\begin{align}\label{eq:tracebound}
\int_{B_r}|\nabla w_k -\nabla u|^2|x|^{-\alpha}\, dx&=\int_{B_r}\left|\xi_k\nabla u + ku\xi'(k|x|)\frac{x}{|x|}\right|^2 |x|^{-\alpha}\, dx\\
&\leq C\int_{B_{2/k}}|\nabla u|^2|x|^{-\alpha}\, dx + Ck^2\int_{B_{2/k}\setminus B_{1/k}}u^2|x|^{-\alpha}\, dx\notag\\
&\leq C\int_{B_{2/k}}|\nabla u|^2|x|^{-\alpha}\, dx + Ck^{\alpha+2}\int_{B_{2/k}}u^2\, dx\notag \\
&\leq C\int_{B_{2/k}}|\nabla u|^2|x|^{-\alpha}\, dx\notag,
\end{align}
the last line following from Lemma \ref{lem:decay}. Therefore $w_k\to u$ in $H^1_\gamma(B_r)$ as $k\to \infty$.  To produce a smooth approximating sequence $u_k$, we simply mollify the sequence $w_k$.
\end{proof}
As a corollary, we can prove density of smooth functions that are locally constant near $\Gamma$.
\begin{corollary} \label{cor:dense}
For any $\alpha>0$ the set
\[ \mathcal S = \{ u \in C^\infty(\bar{\Omega}) \::\: (\exists s>0)(\forall x \in \Gamma)(\forall z \in B(0,1)) \;\; u(x+sz) = u(x) \} \]
is a dense subset of $H^1_\gamma(\Omega)$. 
\end{corollary}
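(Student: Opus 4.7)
The plan is a two-step approximation: first \emph{freeze} $u$ to be a constant on a small ball around each point of $\Gamma$, and then smooth the resulting function. Given $u \in H^1_\gamma(\Omega)$ and $s > 0$ small (say $s < \min\{R/8, r_0\}$), pick for each $x \in \Gamma$ a cutoff $\phi_s^x \in C^\infty_c(B(x, 2s))$ with $\phi_s^x \equiv 1$ on $B(x,s)$ and $|\nabla \phi_s^x|\leq C/s$, set $c_s^x := (u)_{x, 2s}$, and define
\[ u_s := (1-\Phi_s)\,u + \sum_{x\in\Gamma} c_s^x\, \phi_s^x, \qquad \Phi_s := \sum_{x\in\Gamma} \phi_s^x. \]
By construction $u_s \equiv c_s^x$ on $B(x,s)$ and $u_s \equiv u$ outside $\bigcup_{x\in\Gamma}B(x,2s)$, so $u_s$ is already locally constant near $\Gamma$; what remains is to check convergence and regularity.

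The heart of the argument is to show $u_s \to u$ in $H^1_\gamma(\Omega)$ as $s \to 0$. On each ball $B(x, 2s)$, pairwise disjointness being ensured by $4s < R$, the lower bound in \eqref{eq:g} gives $\gamma(y) \geq (r_0/(2s))^\alpha$, hence $\gamma^{-1} \leq C s^\alpha$; on the annulus $B(x,2s)\setminus B(x,s)$ the upper bound reads $\gamma \leq C s^{-\alpha}$. Combining the first with the Poincar\'e inequality yields
\[ \int_{B(x,2s)}(u - c_s^x)^2 \, dy \leq C s^2 \int_{B(x,2s)} |\nabla u|^2 \, dy \leq C s^{2+\alpha} \int_{B(x,2s)} \gamma|\nabla u|^2 \, dy, \]
so from $\nabla u_s - \nabla u = -\Phi_s \nabla u + \sum_x (c_s^x - u)\nabla\phi_s^x$ we obtain
\[ \int \gamma |\nabla \phi_s^x|^2 (c_s^x - u)^2 \, dy \leq C\, s^{-\alpha}\cdot s^{-2} \cdot s^{2+\alpha} \int_{B(x,2s)} \gamma|\nabla u|^2 \, dy = C \int_{B(x,2s)} \gamma|\nabla u|^2 \, dy, \]
with all $s$-powers cancelling exactly. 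Together with the trivial estimate $\int \gamma \Phi_s^2 |\nabla u|^2 \, dy \leq \int_{\bigcup_x B(x,2s)} \gamma|\nabla u|^2 \, dy$, and absolute continuity of the integral $\gamma|\nabla u|^2 \in L^1(\Omega)$, this forces $[u_s - u]_{H^1_\gamma(\Omega)} \to 0$. The $L^2$ convergence is analogous.

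For the smoothing step, decompose $u_s = w_s + v_s$ with $w_s := (1-\Phi_s)u$ and $v_s := \sum_x c_s^x \phi_s^x \in C^\infty(\bar{\Omega})$. Since $w_s$ vanishes on $\bigcup_x B(x,s)$, the weight $\gamma$ is bounded on $\supp(w_s)$ and $w_s \in H^1(\Omega)$. Using the Lipschitz regularity of $\partial \Omega$, standard density yields $\tilde w_k \in C^\infty(\bar{\Omega})$ with $\tilde w_k \to w_s$ in $H^1(\Omega)$. Fix $\chi \in C^\infty(\bar{\Omega})$ equal to $0$ on $\bigcup_x B(x, s/4)$ and to $1$ outside $\bigcup_x B(x,s/2)$, and set $\bar w_k := \chi \tilde w_k$. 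Since $\chi w_s = w_s$, we get $\bar w_k \to w_s$ in $H^1(\Omega)$; on the support of these functions $\gamma$ is bounded, so $H^1$ and $H^1_\gamma$ convergence are equivalent there. Thus $\bar w_k + v_s \in \mathcal{S}$ and $\bar w_k + v_s \to u_s$ in $H^1_\gamma(\Omega)$ as $k\to\infty$, and a standard diagonal argument produces a sequence in $\mathcal{S}$ converging to $u$.

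The main technical point is the balancing of $s$-powers in the second paragraph: the weighted Poincar\'e gain of $s^{2+\alpha}$ precisely absorbs the $s^{-\alpha}$ blow-up of $\gamma$ and the $s^{-2}$ from the cutoff gradient on the transition annulus, leaving only the $L^1$ absolute continuity of $\gamma|\nabla u|^2$ to drive the convergence; this is why any $\alpha > 0$ suffices.
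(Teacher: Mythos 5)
Your proof is correct, and it takes a genuinely different route from the paper. The paper argues through its trace machinery: for $\alpha>d-2$ it picks $\psi\in\mathcal S$ with $\Tr[\psi]=\Tr[u]$, invokes Theorem \ref{thm:tracezero} to approximate the trace-zero remainder by functions in $C^\infty_c(\bar{\Omega}\setminus\Gamma)$, and then handles $\alpha\leq d-2$ by a separate mollification argument (the weight is integrable there) followed by a reduction to the first case and a diagonalization. Your construction instead freezes $u$ at its \emph{local average} $c_s^x=(u)_{x,2s}$ rather than at a trace value --- which is exactly what makes the argument uniform in $\alpha$, since $\Tr[u]$ need not exist when $\alpha\leq d-2$ --- and the only analytic input is the unweighted Poincar\'e inequality on $B(x,2s)$ combined with the two-sided bounds $cs^{-\alpha}\leq\gamma\leq Cs^{-\alpha}$ furnished by \eqref{eq:g} on the transition annulus; the exponents cancel exactly for every $\alpha>0$, and absolute continuity of $\int\gamma|\nabla u|^2$ finishes the job. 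The power-counting here is the same cancellation that drives the paper's estimate \eqref{eq:tracebound} in Theorem \ref{thm:tracezero}, but you obtain it without Lemma \ref{lem:decay} or any trace theory. What the paper's route buys is economy (it reuses results needed elsewhere); what yours buys is a self-contained, case-free proof that makes transparent why the claim holds for all $\alpha>0$. Two housekeeping points you should make explicit: take $s$ small enough that $B(x,2s)\subset\Omega$ for each $x\in\Gamma$ (needed for the interior Poincar\'e inequality), and note that since the balls $B(x,2s)$ are pairwise disjoint the square of the sum $\sum_x(c_s^x-u)\nabla\phi_s^x$ equals the sum of squares, so no cross-terms arise. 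Neither affects the validity of the argument.
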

\begin{proof}
We split the proof into two cases.

Case 1: $\alpha>d-2$. Let $u \in H^1_\gamma(\Omega)$. There exists $\psi\in \mathcal S$ such that $\Tr[\psi]=\Tr[u]$. Since $w:=u-\psi\in H^1_{\gamma,0}(\Omega)$, there exists by Theorem \ref{thm:tracezero} a sequence  $\varphi_k\in C^{\infty}_c(\bar{\Omega}\setminus \Gamma)$ such that $\varphi_k \to w$ as $k\to \infty$. We simply note that $\psi_k:=\varphi_k + \psi \in \mathcal S$ and $\psi_k \to u$ in $H^1_\gamma(\Omega)$ as $k\to \infty$.

Case 2: $\alpha \leq d-2$. In this case, $C^\infty(\bar{\Omega})$ is dense in $H^1_\gamma(\Omega)$ by a standard mollification argument, since the weighting kernel $|x|^{-\alpha}$ is integrable. Hence, for $u\in H^1_\gamma(\Omega)$ with $\alpha\leq d-2$ there exists $\varphi_k\in C^\infty(\bar{\Omega})$ such that $\varphi_k\to u$ in $H^1_\gamma(\Omega)$. Since $\varphi_k$ is smooth, we automatically have
\[\int_{\Omega}\dist(x,\Gamma)^{1-d}|\nabla \varphi_k|^2\, dx < \infty.\]
Thus, by case 1, there exists a sequence $\psi_{k,j}\in \mathcal S$ such that for each $k$, $\psi_{k,j}\to \varphi_k$ in $H^1_{\gamma}$ as $j\to \infty$, since $\alpha \leq d-2\leq d-1$. The proof is completed with a diagonal argument.
\end{proof}

Finally, we prove a Hardy-type inequality for trace zero functions in $H^1_\gamma(B_R)$.
\begin{theorem}[Hardy's inequality]\label{thm:poincare}
Let $\alpha> d-2$ and \red assume $\gamma$ satisfies \eqref{eq:g} and $\Omega$ has a Lipschitz boundary. \nc If $u\in H^1_\gamma(B_r)$ with $\Tr[u](0)=0$ then $\frac{u}{|x|^{(\alpha+2)/2}}\in L^2(B_r)$ and
\begin{equation}\label{eq:hardy}
\int_{B_r}\frac{u^2}{|x|^{\alpha+2}}\, dx \leq C[u]^2_{H^1_\gamma(B_r)}.
\end{equation}
\end{theorem}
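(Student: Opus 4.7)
I would prove this via integration by parts against the vector field $V(x)=x|x|^{-\alpha-2}$, combined with the decay estimate from Lemma \ref{lem:decay} to kill the singular boundary term at the origin. A direct computation gives $\divv V = (d-\alpha-2)|x|^{-\alpha-2}$, and the hypothesis $\alpha>d-2$ makes the coefficient $\alpha+2-d$ strictly positive, so the bulk term carries exactly the right sign. To avoid the singularity I would work on the annulus $B_r\setminus B_\delta$ for $0<\delta<r$, where $|x|^{-\alpha}$ is bounded above and below and the divergence theorem applies in the standard $H^1$ sense.

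Applying the divergence theorem to $u^2 V$ and using the expansion $\divv(u^2V)=2u\nabla u\cdot V + u^2\divv V$ together with the outward unit normals $x/r$ on $\partial B_r$ and $-x/\delta$ on $\partial B_\delta$ produces the identity
\begin{equation*}
(\alpha+2-d)\,I_\delta = 2\int_{B_r\setminus B_\delta}\frac{u\,\nabla u\cdot x}{|x|^{\alpha+2}}\,dx \;-\; \int_{\partial B_r}\frac{u^2}{r^{\alpha+1}}\,dS \;+\; \int_{\partial B_\delta}\frac{u^2}{\delta^{\alpha+1}}\,dS,
\end{equation*}
where $I_\delta := \int_{B_r\setminus B_\delta}u^2/|x|^{\alpha+2}\,dx$. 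The $\partial B_r$ term is nonnegative and appears with a favorable sign, so it can be dropped. The $\partial B_\delta$ term is exactly what Lemma \ref{lem:decay} controls: its bound $\dashint_{\partial B_\delta} u^2\,dS \leq C\delta^{\alpha+2-d}[u]^2_{H^1_\gamma(B_\delta)}$ rearranges to $\delta^{-\alpha-1}\int_{\partial B_\delta}u^2\,dS \leq C[u]^2_{H^1_\gamma(B_\delta)}\to 0$ as $\delta\to 0$ by absolute continuity of the integral.

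For the cross term I would apply Cauchy--Schwarz in the form $|u\,\nabla u\cdot x|/|x|^{\alpha+2}\leq (|u|/|x|^{(\alpha+2)/2})(|\nabla u|/|x|^{\alpha/2})$, which yields $\bigl|\int_{B_r\setminus B_\delta}(u\nabla u\cdot x)/|x|^{\alpha+2}\,dx\bigr| \leq I_\delta^{1/2}\,[u]_{H^1_\gamma(B_r)}$. Young's inequality $2ab\leq\tfrac{\alpha+2-d}{2}a^2+\tfrac{2}{\alpha+2-d}b^2$ absorbs the $I_\delta^{1/2}$ factor into the left-hand side, leaving
\begin{equation*}
\tfrac{\alpha+2-d}{2}\,I_\delta \;\leq\; \tfrac{2}{\alpha+2-d}\,[u]^2_{H^1_\gamma(B_r)} \;+\; \int_{\partial B_\delta}\frac{u^2}{\delta^{\alpha+1}}\,dS.
\end{equation*}
Sending $\delta\to 0$ with monotone convergence on the left and the vanishing of the boundary term on the right produces the conclusion with the explicit constant $C=4/(\alpha+2-d)^2$. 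The only technical point to check is the use of the divergence theorem for $u\in H^1_\gamma(B_r)$; this is routine since $u\in H^1(B_r\setminus\bar{B_\delta})$ has $L^2$ traces on both components of the boundary of the Lipschitz annulus. Alternatively, one can sidestep the issue by first approximating $u$ by functions in $C^\infty_c(\bar{B_r}\setminus\{0\})$ via Theorem \ref{thm:tracezero}, for which the inner boundary term vanishes outright, and then passing to the limit using Fatou's lemma on the left and $H^1_\gamma$-convergence on the right.
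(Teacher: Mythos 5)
Your proof is correct and follows essentially the same route as the paper's: integration by parts against $x|x|^{-\alpha-2}$ on the annulus $B_r\setminus B_\delta$, Lemma \ref{lem:decay} to kill the inner boundary term, and Cauchy--Young absorption of the cross term. The only differences are cosmetic (you drop the outer boundary term by sign and track the explicit constant, while the paper first rescales to $r=1$).
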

\begin{proof}
By a change of variables we can reduce to the case of $r=1$. We first note that
\[\divv\left( \frac{x}{|x|^{\alpha+2}} \right) = -\frac{\alpha+2-d}{|x|^{\alpha+2}}\]
for $x\neq 0$. Thus, for $\eps>0$ we have
\begin{align*}
\int_{B_1\setminus B_\eps}\frac{u^2}{|x|^{\alpha+2}}\, dx&=- \red \frac{1}{\alpha+2-d} \nc \int_{B_1\setminus B_\eps}u^2\divv\left( \frac{x}{|x|^{\alpha+2}} \right)\, dx\\
&= \red \frac{1}{\alpha+2-d} \nc  \left[2\int_{B_1\setminus B_\eps}u\nabla u\cdot \frac{x}{|x|^{\alpha+2}}\, dx - \int_{\partial B_1} u^2\, dS + \frac{1}{\eps^{\alpha+1}}\int_{\partial B_\eps}u^2 \, dS\right]\\
&\leq C\int_{B_1\setminus B_\eps}|u||\nabla u||x|^{-\alpha-1}\, dx + C[u]^2_{H^1_\gamma(B_1)} + C\int_{B_\eps}|x|^{-\alpha}|\nabla u|^2\, dx,
\end{align*}
where the last line follows from Lemma \ref{lem:decay} and the assumption $\alpha>d-2$. Applying Cauchy's inequality to the first term and rearranging yields
\[\int_{B_1\setminus B_\eps}\frac{u^2}{|x|^{\alpha+2}}\, dx \leq  C[u]^2_{H^1_\gamma(B_1)} + C\int_{B_\eps}|x|^{-\alpha}|\nabla u|^2\, dx.\]
 Sending $\eps\to 0$ completes the proof.
\end{proof}

We now establish the well posedness of the continuum properly-weighted Laplacian learning problem. 
\begin{theorem} \label{thm:varexist}
\red Let $\alpha > d-2$, and assume $\rho$ is continuous and bounded above and below by positive constants, $\gamma$ satisfies \eqref{eq:g}, and $\Omega$ has a Lipschitz boundary. \nc Then the problem \eqref{eq:continuum} has a unique solution. 
\end{theorem}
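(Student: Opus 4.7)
The plan is to apply the direct method of the calculus of variations in the Hilbert space $H^1_\gamma(\Omega)$, using the trace machinery developed in Theorem \ref{thm:trace} and Lemma \ref{lem:lebesgue} to handle the unusual pointwise constraint $u=g$ on $\Gamma$, and then to obtain uniqueness by strict convexity. First I would observe that the admissible set is nonempty: pick a function $\psi\in\mathcal S$ from Corollary \ref{cor:dense} which is constant equal to $g(x)$ on a small ball around each $x\in\Gamma$; then $\nabla\psi=0$ near $\Gamma$ where $\gamma$ blows up, and $\gamma$ is bounded away from $\Gamma$, so $\psi\in H^1_\gamma(\Omega)$ with $\Tr[\psi]=g$.

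Next I would establish coercivity of $\E$ on the admissible set. Since $\rho$ and $\gamma$ are bounded below by positive constants, a minimizing sequence $\{u_k\}$ satisfies a uniform bound on $[u_k]_{H^1_\gamma(\Omega)}$ and hence on $\|\nabla u_k\|_{L^2(\Omega)}$ as well. Fix any $x_0\in\Gamma$; Lemma \ref{lem:lebesgue} gives
\[
\bigl|g(x_0)-(u_k)_{x_0,\eps}\bigr|\leq C\eps^{(\alpha+2-d)/2}[u_k]_{H^1_\gamma(\Omega)},
\]
for $\eps\leq R/2$, which bounds $(u_k)_{x_0,\eps}$ uniformly. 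Combined with the standard Poincar\'e inequality on the Lipschitz domain $\Omega$ (in the form $\|u-(u)_{x_0,\eps}\|_{L^2(\Omega)}\leq C\|\nabla u\|_{L^2(\Omega)}$), this yields a uniform bound on $\|u_k\|_{L^2(\Omega)}$, and therefore on $\|u_k\|_{H^1_\gamma(\Omega)}$.

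I would then verify that $H^1_\gamma(\Omega)$ is a Hilbert space (with inner product $\langle u,v\rangle_{H^1_\gamma}=\int uv\,dx+\int \gamma\nabla u\cdot\nabla v\,dx$), which is routine once one notes that $L^2$-convergence of $\gamma^{1/2}\nabla u_k$ forces a.e.\ convergence along a subsequence and hence identifies the weak limit correctly. Extract a subsequence with $u_k\rightharpoonup u$ in $H^1_\gamma(\Omega)$. Weak lower semicontinuity of $\E$ is immediate, since $\E(u)=\tfrac12\|\rho\gamma^{1/2}\nabla u\|_{L^2(\Omega)}^2$ is the square of a norm of the linear image of $u$. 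The essential compatibility check is that $\Tr[u]=g$: by the Rellich--Kondrachov embedding $H^1(\Omega)\hookrightarrow\hookrightarrow L^2(\Omega)$, the sequence $u_k$ converges strongly in $L^2$, and then the quantitative trace estimate \eqref{eq:traceest} of Theorem \ref{thm:trace} gives $\Tr[u_k]\to\Tr[u]$ in $\R^\Gamma$; since $\Tr[u_k]=g$ for every $k$, we conclude $\Tr[u]=g$, so $u$ is admissible and a minimizer.

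For uniqueness, suppose $u_1,u_2$ are two minimizers. By convexity of $\E$, testing with $\tfrac12(u_1+u_2)$ forces $\nabla u_1=\nabla u_2$ a.e.\ in $\Omega$, so $w:=u_1-u_2$ is constant on each connected component of $\Omega$ and lies in $H^1_\gamma(\Omega)$ with $\Tr[w]=0$. Since every connected component of $\Omega$ meets $\Gamma$ (which for the problem to be well-posed we may assume; otherwise uniqueness would fail on the component without labels), the vanishing of $\Tr[w]$ at a point of each component forces the constant to be zero, giving $u_1=u_2$. The main technical subtlety throughout is bridging the pointwise ``boundary'' data with weak convergence in $H^1_\gamma(\Omega)$, and this is precisely where the trace results of Section \ref{sec:weighted} are doing the work.
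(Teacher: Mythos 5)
Your proposal is correct and follows essentially the same route as the paper: the direct method with weak lower semicontinuity from convexity, the quantitative trace estimate \eqref{eq:traceest} to pass the constraint $u=g$ on $\Gamma$ to the weak limit, and strict convexity for uniqueness. You additionally supply details the paper leaves implicit — nonemptiness of the admissible set via Corollary \ref{cor:dense}, the coercivity argument yielding the $L^2$ bound, and the observation that uniqueness requires every connected component of $\Omega$ to meet $\Gamma$ — all of which are sound.
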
 
\begin{proof}
The existence follows by the direct method of the calculus of variations. Namely let $u_k$, $k=1,2, \dots$ be a minimizing sequence. By the Sobolev Embedding Theorem, $u_k$ has a subsequence which converges weakly in $H^1_\gamma(\Omega)$ and in $L^2(\Omega)$ towards $u \in H^1_\gamma(\Omega)$. Since $\E$ is convex, it is weakly lower-semicontinuous and thus
 $\E (u) \leq \liminf_{k \to \infty} \E(u_k)$. Furthermore note that \eqref{eq:traceest} implies that $\Tr(u_k)(z) \to \Tr(u)(z)$ for every $z \in \Gamma$. Thus $u = g$ on $\Gamma$. We conclude that $u$ is the desired minimizer. The uniqueness follows from convexity of $\E$, by a standard argument, which is recalled in the proof of Lemma \ref{lem:variational}.
\end{proof}

\subsection{Elliptic problem}
\label{sec:ep}

We now study the elliptic Euler-Lagrange equation \eqref{eq:pde}. We additionally assume in this section that $\Omega$ has a $C^{2,\alpha}$ boundary and $\rho\in C^{1,\sigma}(\bar{\Omega})$ for some $\sigma>0$. \red As before, we assume $\rho$ is bounded above and below by positive constants. \nc

\begin{definition}\label{def:weak}
We say that $u\in H^1_\gamma(\Omega)$ is a weak solution of \eqref{eq:pde} if
\begin{equation}\label{eq:weakform}
\int_{\Omega} \gamma \rho^2 \nabla u \cdot \nabla \varphi\, dx = 0
\end{equation}
for all $\varphi\in H^1_{\gamma,0}(\Omega)$ and $\Tr[u](x) = g(x)$ for all $x\in \Gamma$.
\end{definition}

We first need a preliminary proposition on barrier functions.
\begin{proposition}[Barrier]\label{prop:barrier}
Let $\alpha > d-2$ and fix any $0 < \beta < \alpha + 2-d$. Then there exists $c>0$ depending on $\alpha,\beta,\rho$ and $d$ such that $w(x) = |x|^\beta$ satisfies
\begin{equation}\label{eq:barrier}
\divv(\rho^2(1+|x|^{-\alpha})\nabla w) \leq -\frac{\beta}{2}(\alpha + 2 - \beta -d)\rho^2 |x|^{-(\alpha+2-\beta)}
\end{equation}
for all $0 < |x| \leq c$.
\end{proposition}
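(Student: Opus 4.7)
The plan is a direct computation: differentiate $w(x)=|x|^\beta$, expand the divergence using the product rule, isolate the most singular term, and verify that all remaining terms are of strictly lower order near the origin so they can be absorbed by shrinking $c$.

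Concretely, I would first record that $\nabla w = \beta|x|^{\beta-2}x$ and $\Delta w = \beta(\beta+d-2)|x|^{\beta-2}$. Expanding,
\begin{equation*}
\divv(\rho^2(1+|x|^{-\alpha})\nabla w) = \nabla\bigl(\rho^2(1+|x|^{-\alpha})\bigr)\cdot\nabla w + \rho^2(1+|x|^{-\alpha})\Delta w,
\end{equation*}
and using $\nabla(\rho^2(1+|x|^{-\alpha})) = 2\rho\nabla\rho(1+|x|^{-\alpha}) - \alpha\rho^2|x|^{-\alpha-2}x$, a short calculation yields
\begin{equation*}
\divv(\rho^2(1+|x|^{-\alpha})\nabla w) = -\beta(\alpha+2-d-\beta)\rho^2|x|^{\beta-\alpha-2} + \mathcal{R}(x),
\end{equation*}
where the remainder collects the terms
\begin{equation*}
\mathcal{R}(x) = \beta(\beta+d-2)\rho^2|x|^{\beta-2} + 2\beta\rho(\nabla\rho\cdot x)|x|^{\beta-2}\bigl(1+|x|^{-\alpha}\bigr).
\end{equation*}

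The dominant term has the desired sign since $0<\beta<\alpha+2-d$, and it carries the singularity $|x|^{\beta-\alpha-2} = |x|^{-(\alpha+2-\beta)}$. It remains to show $|\mathcal{R}(x)|$ is dwarfed by half of the main term for $|x|$ small. Using that $\rho$ and $\nabla\rho$ are bounded (with $\rho$ bounded below), each piece of $\mathcal{R}$ is controlled by a constant times $|x|^{\beta-2}$, $|x|^{\beta-1}$, or $|x|^{\beta-\alpha-1}$, each strictly less singular than $|x|^{\beta-\alpha-2}$ by at least one power of $|x|$ (here I use $\alpha>d-2\geq 0$, so $\alpha \geq 0$). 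Hence there exists $c=c(\alpha,\beta,d,\rho)>0$ such that
\begin{equation*}
|\mathcal{R}(x)| \leq \frac{\beta}{2}(\alpha+2-d-\beta)\rho^2|x|^{-(\alpha+2-\beta)} \qquad \text{for } 0<|x|\leq c,
\end{equation*}
which yields the claimed inequality \eqref{eq:barrier}.

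The only mildly delicate point is bookkeeping the exponents in $\mathcal{R}$ to confirm that the $2\rho(\nabla\rho\cdot x)|x|^{-\alpha}$ term — which at first glance carries the same $|x|^{-\alpha}$ factor as the leading term — actually gains an extra power of $|x|$ from the dot product with $x$, yielding $|x|^{\beta-\alpha-1}$ rather than $|x|^{\beta-\alpha-2}$. Once that is verified, smallness of $c$ absorbs everything. No subtle regularity of $\rho$ beyond $\rho\in C^{1,\sigma}$ is needed; boundedness of $\rho$ and $\nabla\rho$ suffice.
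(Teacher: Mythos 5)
Your computation is correct and is essentially the paper's own argument: the paper performs the same product-rule expansion, factors out $\beta\rho^2|x|^{\beta-\alpha-2}$ to exhibit the leading constant $-(\alpha+2-d-\beta)$, and absorbs the remaining $O(|x|)$ and $O(|x|^{\alpha})$ terms by shrinking $c$, exactly as you do with $\mathcal{R}$. The only nitpick is that absorbing the $|x|^{\beta-2}$ piece requires $\alpha>0$ strictly (which does follow from $\alpha>d-2\geq 0$ for $d\geq 2$), not merely $\alpha\geq 0$ as your parenthetical states, and the gain there is $|x|^{\alpha}$ rather than a full power of $|x|$ when $\alpha<1$ --- neither affects the conclusion.
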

\begin{proof}
Since $\nabla w(x) = \beta|x|^{\beta-2}x$ we have
\begin{align*}
\divv(\rho^2(1+ |x|^{-\alpha})\nabla w) &=\beta\divv(\rho^2|x|^{\beta-2}x) + \beta\divv(\rho^2|x|^{\beta-\alpha-2}x)\\
&=2\beta|x|^{\beta-\alpha-2}(1 + |x|^{\alpha})\rho \nabla \rho \cdot x +\beta\rho^2\divv(|x|^{\beta-2}x) + \beta\rho^2\divv(|x|^{\beta-\alpha-2}x)\\
&=\beta\rho^2|x|^{\beta-\alpha-2}\left[ 2(1 + |x|^{\alpha})\nabla \log\rho \cdot x + (d+\beta-2)|x|^{\alpha} + d + \beta-\alpha-2  \right]\\
&\red \leq\beta\rho^2|x|^{\beta-\alpha-2}\left[ C(1 + |x|^{\alpha})|x| + (d+\beta-2)|x|^{\alpha} + d + \beta-\alpha-2  \right]
\end{align*}
\red The proof is completed by choosing $c>0$ small enough so that when $0 < |x| \leq c$ 
\[C(1 + |x|^{\alpha})|x| + (d+\beta-2)|x|^{\alpha}  \leq \tfrac{1}{2}(\alpha+2-\beta-d).\qedhere \]
\nc
\end{proof}

\begin{theorem}\label{thm:wellposedness}
\red Let $\alpha>d-2$. Assume $\gamma$ satisfies \eqref{eq:g}, $\Omega$ has a $C^{2,\alpha}$ boundary, and $\rho\in C^{1,\sigma}(\bar{\Omega})$ is bounded above and below by positive constants \nc. Then the elliptic equation \eqref{eq:pde} has a unique weak solution $u\in H^1_\gamma(\Omega)$. Furthermore, $u\in C(\bar{\Omega})\cap C^{2,\sigma}_{loc}(\bar{\Omega}\setminus \Gamma)$ and satisfies for every $0 < \beta < \alpha + 2-d$
\begin{equation}\label{eq:holderpde}
|u(x)-u(y)|\leq C(\beta)|x-y|^\beta \ \ \ \ (x\in \bar{\Omega},y\in \Gamma).
\end{equation}
\end{theorem}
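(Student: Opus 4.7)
The plan is to combine Theorem~\ref{thm:varexist} with classical Schauder theory and a barrier comparison driven by Proposition~\ref{prop:barrier}.

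\emph{Existence, uniqueness, and interior regularity.} Theorem~\ref{thm:varexist} produces a unique minimizer $u$ of \eqref{eq:continuum}. Because $H^1_{\gamma,0}(\Omega)=\{v\in H^1_\gamma(\Omega):\Tr[v]=0\}$ by Theorem~\ref{thm:tracezero}, computing the first variation of $\mathcal E$ along $\varphi\in H^1_{\gamma,0}(\Omega)$ shows this $u$ satisfies \eqref{eq:weakform}; conversely, if $u_1,u_2$ are two weak solutions then $u_1-u_2\in H^1_{\gamma,0}(\Omega)$ is itself admissible as a test function, and testing against it forces $\nabla(u_1-u_2)=0$ and (via the zero trace) $u_1=u_2$. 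Away from $\Gamma$ the coefficient $\gamma\rho^2$ is $C^{1,\sigma}$ and uniformly elliptic; together with the $C^{2,\alpha}$ boundary and the Neumann condition this puts us in the setting of interior and boundary Schauder estimates, yielding $u\in C^{2,\sigma}_{loc}(\bar\Omega\setminus\Gamma)$. A truncation in \eqref{eq:continuum}---replacing $u$ by $\max(-M,\min(M,u))$ for $M:=\max_\Gamma|g|$ strictly decreases $\mathcal E$ unless $|u|\leq M$---gives $\|u\|_{L^\infty(\Omega)}\leq M$.

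\emph{Hölder estimate at $\Gamma$.} Fix $x_0\in\Gamma$ and reduce to $g(x_0)=0$ by subtracting a constant. It suffices to prove \eqref{eq:holderpde} for $\beta$ in the subinterval $(\tfrac{\alpha+2-d}{2},\alpha+2-d)$, since smaller exponents follow by interpolation with the $L^\infty$ bound and $\diam(\Omega)^{\beta-\beta'}$. Fix such a $\beta$ and let $c>0$ be the constant from Proposition~\ref{prop:barrier}, shrunk if needed so that $B(x_0,c)\subset\Omega$ and $\bar B(x_0,c)\cap\Gamma=\{x_0\}$. Set $w(x):=A|x-x_0|^\beta$ with $A:=Mc^{-\beta}$, so that $w\geq u$ on $\partial B(x_0,c)$ and $-\divv(\gamma\rho^2\nabla w)\geq 0$ on $B(x_0,c)\setminus\{x_0\}$. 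Because $\beta>(\alpha+2-d)/2$ a direct calculation gives $w\in H^1_\gamma(B(x_0,c))$, so $\varphi:=(u-w)^+\mathbf{1}_{B(x_0,c)}$, extended by zero, lies in $H^1_\gamma(\Omega)$; since $\Tr[u](x_0)=0=w(x_0)$ and $\varphi$ vanishes near every other point of $\Gamma$, Theorem~\ref{thm:tracezero} places $\varphi\in H^1_{\gamma,0}(\Omega)$. Testing \eqref{eq:weakform} against $\varphi$ and subtracting the weak supersolution inequality for $w$ produces
\begin{equation*}
\int_{\{u>w\}\cap B(x_0,c)}\gamma\rho^2|\nabla(u-w)|^2\,dx\leq 0,
\end{equation*}
so $(u-w)^+\equiv 0$. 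The symmetric argument with the subsolution $-w$ gives $|u|\leq A|x-x_0|^\beta$ on $B(x_0,c)$, which together with $\|u\|_\infty\leq M$ yields \eqref{eq:holderpde} and, in particular, $u\in C(\bar\Omega)$.

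\emph{Main obstacle.} The delicate step is promoting the classical supersolution inequality for $w$ on the punctured ball to a weak inequality tested against $\varphi$. The plan is to integrate by parts on the annulus $B(x_0,c)\setminus B(x_0,\delta)$ and send $\delta\to 0$, controlling the spurious boundary term on $\partial B(x_0,\delta)$ via the $L^2$-decay of trace-zero functions from Lemma~\ref{lem:decay} applied to $u-w$; an alternative is to approximate $\varphi$ by functions compactly supported away from $x_0$ by the cutoff construction in the proof of Theorem~\ref{thm:tracezero}, with Hardy's inequality (Theorem~\ref{thm:poincare}) guaranteeing that the cross terms converge in the limit.
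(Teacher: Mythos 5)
Your proposal is correct, but it proves the theorem by a genuinely different route than the paper. The paper never works with the weak formulation directly at the singular points: it excises small balls $B(y,\eps)$ around each $y\in\Gamma$, solves the classical mixed Dirichlet--Neumann problem \eqref{eq:eulereps} on $\Omega_\eps$, applies the \emph{classical} maximum principle there with the barrier of Proposition~\ref{prop:barrier} to get \eqref{eq:hbarrier} uniformly in $\eps$, and then passes to the limit using uniform Schauder bounds on $\bar{\Omega_\delta}$; existence, the H\"older estimate \eqref{eq:holderpde}, and membership in $H^1_\gamma(\Omega)$ all come out of this compactness argument at once, and the variational solution is only identified with the PDE solution afterwards (Lemma~\ref{lem:variational}). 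You instead start from the minimizer of Theorem~\ref{thm:varexist} and run a \emph{weak} comparison principle in $H^1_\gamma$, testing against $(u-w)^+\mathbf{1}_{B(x_0,c)}$. The delicate step you flag --- upgrading the pointwise supersolution inequality for $w$ on the punctured ball to the weak inequality against trace-zero test functions --- is exactly where the work lies, and your plan closes it: with $\beta>(\alpha+2-d)/2$ one has $w\in H^1_\gamma(B(x_0,c))$, and the spurious term on $\partial B_\delta$ is bounded by $C\delta^{\beta-1-\alpha}|\partial B_\delta|^{1/2}\bigl(\int_{\partial B_\delta}\varphi^2\,dS\bigr)^{1/2}\leq C\delta^{\beta-(\alpha+2-d)/2}[\varphi]_{H^1_\gamma(B_\delta)}\to 0$ by Lemma~\ref{lem:decay}, so the restriction to the upper half of the exponent range is precisely what makes the boundary term vanish, and the interpolation with $\|u\|_\infty\leq M$ recovers the full range of $\beta$. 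What each approach buys: the paper's regularization avoids ever having to interpret the singular barrier as a weak supersolution and gets the $C^{2,\sigma}_{loc}$ regularity and the energy bound in one stroke, at the cost of constructing and controlling a family of auxiliary classical problems; yours stays entirely within the weighted-Sobolev framework of Section~\ref{sec:weighted} and reuses its machinery (Theorem~\ref{thm:tracezero}, Lemma~\ref{lem:decay}, Hardy), at the cost of the extra exponent bookkeeping and the limiting argument at the singularity. Two small points to make explicit if you write this up: shrink $c$ so that also $c\leq R/4$, since Proposition~\ref{prop:barrier} uses the exact form \eqref{eq:g} of $\gamma$ (the paper glosses over the same point); and note that $\Tr[(u-w)^+](x_0)=0$ follows from $\Tr[u-w](x_0)=0$ together with the Poincar\'e bound \eqref{eq:poin}, so that Theorem~\ref{thm:tracezero} really does apply to your test function.
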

\begin{proof}
For $\eps>0$ set 
\[\Omega_\eps := \Omega \setminus \bigcup_{y\in \Gamma}\bar{B(y,\eps)}\]
and let $u_{\eps}\in C^{2,\sigma}(\bar{\Omega_\eps})$ be the unique solution of the approximating problem
\begin{equation}\label{eq:eulereps}
\left\{\begin{aligned}
-\divv(\gamma \rho^2 \nabla u_{\eps}) &= 0&&\text{in }\Omega_\eps\\ 
u_{\eps} &=g(y)&&\text{on }\partial B(y,\eps) \text{ for all }y\in \Gamma\\
\frac{\partial u_{\eps}}{\partial \nu} &=0&&\text{on }\partial \Omega.
\end{aligned}\right.
\end{equation}
It is a classical result that $u_{\eps}$ is the unique solution of the variational problem
\begin{equation}\label{eq:vp}
\min \left\{ \int_{\Omega_\eps}\gamma \rho^2 |\nabla u|^2\, dx\, : \, u\in H^1(\Omega_\eps) \text{ and } \forall y\in \Gamma,\, u=g(y) \text{ on }  \partial B(y,\eps) \right\}.
\end{equation}
In particular, it follows that 
\begin{equation}\label{eq:energybound}
\sup_{\eps>0}\int_{\Omega_\eps}\gamma \rho^2|\nabla u_{\eps}|^2\, dx < \infty.
\end{equation}
By the maximum principle 
\begin{equation}\label{eq:maxprinc}
\min_{\Gamma}g \leq u_{\eps}\leq \max_{\Gamma}g.
\end{equation}
Let $y\in \Gamma$. By Proposition \ref{prop:barrier}, $w(x) := |x-y|^\beta$ satisfies
\[-\divv(\gamma \rho^2 \nabla w)  \geq \frac{\beta}{2}(\alpha+2-\beta-d)|x-y|^{-(\alpha+2-\beta)} > 0\]
for $0 < |x-y| \leq c$, where $c$ depends on $\alpha,\beta,\rho$, and $d$. Thus, another application of the maximum principle yields
\[u_{\eps}(x) \leq g(y) + C|x-y|^{\beta}\]
for all $x\in \bar{\Omega_\eps}$, where $C$ is independent of $\eps>0$. The other direction is similar, yielding
\begin{equation}\label{eq:hbarrier}
|u_{\eps}(x)- g(y)| \leq C|x-y|^{\beta} \ \ \text{ for all }x \in \bar{\Omega_\eps}.
\end{equation}
By the Schauder estimates \cite{gilbarg2015elliptic}, for each $\delta>0$ there exists a constant $C>0$, independent of $\eps$, such that
\[\|u_{\eps}\|_{C^{2,\sigma}(\bar{\Omega_\delta})} \leq C\]
for all $0 < \eps < \delta$. Therefore, there exists a subsequence $u_{\eps_k}$ and $u\in C^{2,\sigma}_{loc}(\bar{\Omega}\setminus \Gamma)$ such that $u_{\eps_k} \to u$ in $C^2_{loc}(\bar{\Omega}\setminus \Gamma)$. In particular, $u$ solves \eqref{eq:pde} classically and satisfies \eqref{eq:holderpde}, due to  \eqref{eq:hbarrier}. Thus $u\in C(\bar{\Omega})$ and $u=g$ on $\Gamma$. Finally, it follows from \eqref{eq:energybound} that $u\in H^1_\gamma(\Omega)$, and so $u$ is a weak solution of \eqref{eq:pde}, as per Definition \ref{def:weak}. Uniqueness of weak solutions follows by a standard energy method argument.
\end{proof}

\begin{lemma}\label{lem:variational}
\red Let $\alpha>d-2$. Assume $\gamma$ satisfies \eqref{eq:g}, $\Omega$ has a $C^{2,\alpha}$ boundary, and $\rho\in C^{1,\sigma}(\bar{\Omega})$ is bounded above and below by positive constants \nc. Then solution $u$ of the  variational problem \eqref{eq:continuum}  is the unique weak solution of the Euler-Lagrange equation \eqref{eq:pde}.
\end{lemma}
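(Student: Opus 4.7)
The plan is to verify that any minimizer $u$ of \eqref{eq:continuum} satisfies the weak form \eqref{eq:weakform} via a standard first variation argument, and to recall the convexity argument for uniqueness (which doubles as the ``standard'' argument deferred from the proof of Theorem~\ref{thm:varexist}). The key tool throughout is Theorem~\ref{thm:tracezero}, which identifies $H^1_{\gamma,0}(\Omega)$ with the trace-zero subspace of $H^1_\gamma(\Omega)$ and thereby tells us exactly which perturbations are admissible.

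For the first variation, I would fix $\varphi \in H^1_{\gamma,0}(\Omega)$ and $t \in \R$. By Theorem~\ref{thm:tracezero}, $\Tr[\varphi] = 0$, so linearity of $\Tr$ (Theorem~\ref{thm:trace}) gives $\Tr[u+t\varphi] = \Tr[u] = g$ on $\Gamma$; hence $u+t\varphi$ is admissible. Expanding the quadratic functional,
\[\E(u+t\varphi) - \E(u) = t \int_\Omega \gamma \rho^2 \nabla u \cdot \nabla \varphi \, dx + \frac{t^2}{2}\int_\Omega \gamma \rho^2 |\nabla \varphi|^2 \, dx \geq 0\]
for all $t \in \R$, which forces the linear-in-$t$ coefficient to vanish. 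This is precisely \eqref{eq:weakform}, so $u$ is a weak solution of \eqref{eq:pde} (having $\Tr[u]=g$ on $\Gamma$ by construction).

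For uniqueness, suppose $u_1, u_2$ are both weak solutions (equivalently, minimizers) with $\Tr[u_i] = g$ on $\Gamma$. Then $w := u_1 - u_2$ has vanishing trace, so $w \in H^1_{\gamma,0}(\Omega)$ by Theorem~\ref{thm:tracezero}. Subtracting the two weak equations and testing against $\varphi = w$ yields
\[\int_\Omega \gamma \rho^2 |\nabla w|^2 \, dx = 0,\]
and since $\gamma \rho^2$ is bounded below by a positive constant, $\nabla w = 0$ a.e. Thus $w$ is constant on each connected component of $\Omega$, and the vanishing trace on $\Gamma$---which is assumed to meet every component, as is needed for well-posedness of the variational problem---forces $w \equiv 0$.

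There is no genuine obstacle in this argument: all the nontrivial work has already been done in Section~\ref{sec:weighted}. The only point requiring care is that admissible perturbations need not be smooth and compactly supported away from $\Gamma$, but are merely abstract elements of $H^1_{\gamma,0}(\Omega)$; Theorem~\ref{thm:tracezero} handles this abstraction cleanly, and the variational computation itself is entirely routine.
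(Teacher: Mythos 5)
Your argument is correct, but it runs in the opposite direction from the paper's. You start from the minimizer of \eqref{eq:continuum} (whose existence and uniqueness is Theorem \ref{thm:varexist}), derive the weak form \eqref{eq:weakform} by first variation, and then prove uniqueness of weak solutions by the energy method; this makes the lemma self-contained given the trace theory of Section \ref{sec:weighted}, and in particular it does not use the existence part of Theorem \ref{thm:wellposedness} at all. The paper instead starts from the weak solution $u$ already constructed in Theorem \ref{thm:wellposedness} and, for any competitor $w$ with $\Tr[w]=\Tr[u]$, uses the weak equation with test function $u-w\in H^1_{\gamma,0}(\Omega)$ together with the polarization identity
\[
\int_\Omega \gamma\rho^2|\nabla w|^2\,dx=\int_\Omega \gamma\rho^2|\nabla u|^2\,dx+\int_\Omega \gamma\rho^2|\nabla u-\nabla w|^2\,dx
\]
to conclude that $u$ is the unique minimizer; uniqueness of weak solutions is delegated to Theorem \ref{thm:wellposedness} (where it is asserted via the same energy argument you spell out). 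Both routes hinge on Theorem \ref{thm:tracezero} to identify the admissible perturbations with $H^1_{\gamma,0}(\Omega)$, and both implicitly need $\Gamma$ to meet every connected component of $\Omega$ to pass from $\nabla w=0$ to $w\equiv 0$ --- a hypothesis the paper leaves tacit and which you rightly flag. What the paper's direction buys is that the minimizer inherits the regularity ($C(\bar{\Omega})\cap C^{2,\sigma}_{loc}$ and the H\"older estimate \eqref{eq:holderpde}) of the constructed weak solution, which is what is actually used later; your direction buys independence from the barrier construction. Either is a complete proof of the lemma as stated.
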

\begin{proof}
Let $u\in H^1_\gamma(\Omega)$ be the unique weak solution of \eqref{eq:pde}, and let $w\in H^1_\gamma(\Omega)$ with $\Tr[w]=\Tr[u]$. Then by the definition of weak solution
\[\int_\Omega \gamma \rho^2 \nabla u \cdot \nabla (u-w) \, dx = 0.\]
Therefore
\begin{align*}
\int_\Omega \gamma \rho^2 |\nabla u|^2 \,dx &=\int_\Omega \gamma \rho^2 \nabla u\cdot \nabla w \,dx\\
&=\frac{1}{2}\int_\Omega \gamma \rho^2 |\nabla u|^2 \,dx+\frac{1}{2}\int_\Omega \gamma \rho^2 |\nabla w|^2 \,dx - \frac{1}{2}\int_\Omega \gamma \rho^2 |\nabla u - \nabla  w|^2 \,dx,
\end{align*}
and so
\[\int_\Omega \gamma \rho^2 |\nabla u|^2 \,dx  = \int_\Omega \gamma \rho^2 |\nabla w|^2 \,dx - \int_\Omega \gamma \rho^2 |\nabla u - \nabla  w|^2 \,dx.\]
It follows that $u$ is the unique solution of the variational problem \eqref{eq:continuum}.
\end{proof}

\section{Discrete to continuum Convergence} \label{sec:convergence}

Throughout this section we consider $\Omega$, $\X_n$, and $\eta$ which satisfy the assumptions of Section \ref{sec:main}.
Let $\mu_n = \frac{1}{n} \sum_{i=1}^n \delta_{x_i}$ be the empirical measure of the sample. Let $d_\infty(\mu, \mu_n)$ be the $\infty$-transportation distance between $\mu$ and $\mu_n$, discussed in Appendix \ref{sec:ot}. 

We now state our main result.  In order to compare discrete and continuum minimizers we use the $TL^p$ topology introduced in \cite{GTS16}. We review the topology and its basic properties in Appendix \ref{sec:TLp}.

\begin{theorem} \label{thm:conv}
Let $\eps_n$ be a sequence of positive numbers converging to zero as $n \to \infty$ and such that 
$\eps_n \gg d_\infty(\mu, \mu_n)$.
Let $\zeta_n \in (1, \infty]$ be such that $\zeta_n \gg n \eps_n^2$.
Consider $\alpha>d-2$. 
Let $u_n$ be a sequence of minimizers of the problem \eqref{eq:LapLearn} for $\GE_{n,\eps_n, \zeta_n}$. Then \red almost surely \nc $(\mu_n, u_n)$
converges in $TL^2$ to $(\mu,u)$ where $u$ is the minimizer of  \eqref{eq:continuum}.
\end{theorem}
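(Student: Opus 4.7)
The plan is to establish the convergence via $\Gamma$-convergence of the functionals $\GE_{n,\eps_n,\zeta_n}$ to $\E$ in the $TL^2$ topology together with a compactness statement, and then invoke uniqueness of the continuum minimizer (Theorem~\ref{thm:varexist}) to pass from convergence of functionals to convergence of minimizers. Throughout I view both discrete and continuum functionals as extended real-valued maps on $TL^2$ by setting the energy to $+\infty$ off the appropriate domains (and off the constraint $u=g$ on $\Gamma$).

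\medskip

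\emph{Step 1 (compactness).} I would first prove that if $v_n\colon \X_n\to \R$ satisfies $v_n=g$ on $\Gamma$ and $\sup_n \GE_{n,\eps_n,\zeta_n}(v_n)<\infty$, then $(\mu_n,v_n)$ is precompact in $TL^2$, and every cluster point $(\mu,v)$ satisfies $v\in H^1_\gamma(\Omega)$ with $\Tr[v]=g$ on $\Gamma$. Away from an arbitrarily small neighborhood $U_\delta$ of $\Gamma$ the weight $\gamma_{\zeta_n}$ is bounded, so on $\Omega\setminus U_\delta$ I can invoke the standard compactness result for nonlocal Dirichlet energies of Garc\'ia Trillos--Slep\v{c}ev (using $\eps_n\gg d_\infty(\mu,\mu_n)$). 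A diagonal argument in $\delta\to 0$, together with the elementary $L^\infty$ bound $\min_\Gamma g\le v_n\le \max_\Gamma g$ (obtained by truncating the minimizer, which only decreases the energy), yields $TL^2$-precompactness on all of $\Omega$. The limit $v$ lies in $H^1_\gamma(\Omega)$ by lower semicontinuity of the weighted Dirichlet form (Step 2). The trace condition is then passed to the limit using Theorem~\ref{thm:trace}: if $(\mu_n,v_n)\to(\mu,v)$ in $TL^2$ with $v_n=g$ on $\Gamma$, applying~\eqref{eq:traceest} (after suitably extending $v_n$ to $H^1_\gamma(\Omega)$, e.g.\ via the nonlocal-to-local interpolation used in \cite{GTS16}) gives $\Tr[v]=g$.

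\medskip

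\emph{Step 2 (liminf inequality).} For any $(\mu_n,v_n)\to(\mu,v)$ in $TL^2$ I would show
\[
\liminf_{n\to\infty}\GE_{n,\eps_n,\zeta_n}(v_n)\ \ge\ \E(v).
\]
Away from $\Gamma$ this is the standard liminf inequality for nonlocal graph Dirichlet energies with a bounded continuous weight (here the weight $\gamma_{\zeta_n}\rho^2$ is uniformly bounded on $\Omega\setminus U_\delta$ and converges locally uniformly to $\gamma\rho^2$). The contribution from the inner region $U_\delta$ can be dropped (both discrete and continuum energies are nonnegative), and letting $\delta\to 0$ inside a monotone convergence argument produces $\E(v)$ in the limit.

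\medskip

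\emph{Step 3 (recovery sequence).} For $v\in H^1_\gamma(\Omega)$ with $\Tr[v]=g$ on $\Gamma$, I need $v_n\colon\X_n\to\R$ with $v_n=g$ on $\Gamma$, $(\mu_n,v_n)\to(\mu,v)$ in $TL^2$ and $\limsup_n \GE_{n,\eps_n,\zeta_n}(v_n)\le\E(v)$. Using the density result Corollary~\ref{cor:dense}, I reduce to $v\in\mathcal S$, i.e.\ $v$ smooth on $\bar\Omega$ and constant (equal to $g(x)$) on a ball $B(x,s)$ around each $x\in\Gamma$. For such $v$, I define $v_n$ by sampling $v$ on $\X_n$; then the hard label constraint $v_n=g$ on $\Gamma$ holds automatically. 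Away from $\Gamma$ the usual transport-map argument based on $\eps_n\gg d_\infty(\mu,\mu_n)$ yields the pointwise-in-$n$ bound
\[
\GE_{n,\eps_n,\zeta_n}(v_n)\ \le\ \int_\Omega \gamma_{\zeta_n}|\nabla v|^2\rho^2\,dx + o(1),
\]
and the right-hand side equals $\E(v)$ because $v$ is constant on a neighborhood of $\Gamma$, so the singular region contributes zero (independent of how large $\zeta_n$ becomes). The contribution of cross-terms between the constant region near $\Gamma$ and its exterior is $O(\zeta_n\cdot n\eps_n^2\cdot \eps_n^2)\to 0$ using the condition $\zeta_n\gg n\eps_n^2$ only in the sharper form $\zeta_n\eps_n^2\to 0$; in fact $\zeta_n\gg n\eps_n^2$ is precisely what is needed here to suppress boundary-layer contributions on the scale of the graph bandwidth.

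\medskip

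\emph{Step 4 (conclusion).} Let $u_n$ be minimizers of $\GE_{n,\eps_n,\zeta_n}$ subject to $u_n=g$ on $\Gamma$, and let $u$ denote the continuum minimizer, which is unique by Theorem~\ref{thm:varexist}. Comparing with a recovery sequence for $u$ shows $\limsup_n \GE_{n,\eps_n,\zeta_n}(u_n)\le \E(u)<\infty$, so by Step 1 the sequence $(\mu_n,u_n)$ is precompact in $TL^2$ and every cluster point $(\mu,\tilde u)$ satisfies $\tilde u\in H^1_\gamma(\Omega)$ with $\Tr[\tilde u]=g$. The liminf inequality (Step 2) yields $\E(\tilde u)\le \liminf_n \GE_{n,\eps_n,\zeta_n}(u_n)\le \E(u)$, so $\tilde u$ is also a minimizer of \eqref{eq:continuum}; by uniqueness $\tilde u=u$. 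Since every subsequential limit equals $u$, the whole sequence converges.

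\medskip

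\emph{Main obstacle.} The heart of the argument is Step 3 near the labels, combined with the trace-passage in Step 1. The weight $\gamma$ is singular at $\Gamma$, so the recovery sequence must be engineered to be constant near each label (which is exactly what Corollary~\ref{cor:dense} supplies); and ensuring that the boundary condition $v_n=g$ on $\Gamma$ passes from the discrete problem to the continuum trace condition $\Tr[v]=g$ requires the quantitative trace estimate \eqref{eq:traceest}, which is only available because $\alpha>d-2$. The delicate quantitative relation $\zeta_n\gg n\eps_n^2$ enters precisely to keep the annular contribution around $\Gamma$ (where $\gamma_{\zeta_n}$ is at its peak but the constant region begins) negligible in the graph energy.
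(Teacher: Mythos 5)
Your overall skeleton matches the paper's: $\Gamma$-convergence of the constrained functionals, plus compactness, plus uniqueness of the continuum minimizer (Theorem \ref{thm:varexist}). The compactness step, the liminf away from $\Gamma$, and the recovery sequence via Corollary \ref{cor:dense} are all essentially as in the paper. But there is a genuine gap at exactly the step you yourself identify as the heart of the matter: showing that a cluster point $\tilde u$ of a bounded-energy sequence with $u_n=g$ on $\Gamma$ satisfies $\Tr[\tilde u]=g$. Your one-line justification --- ``applying \eqref{eq:traceest} after suitably extending $v_n$ to $H^1_\gamma(\Omega)$'' --- does not go through as stated, for two reasons. First, the natural extension (compose with the transport maps of Theorem \ref{thm:TransBound} and mollify at scale $\eps_n$) has energy controlled only by the \emph{truncated} weight $\gamma_{\tilde\zeta_n}$, not by the singular weight $\gamma$; since \eqref{eq:traceest} requires a uniform bound on $[\cdot]_{H^1_\gamma(\Omega)}$, it is not directly applicable. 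The paper repairs this with Lemma \ref{lem:wloca-loc}, replacing the mollified function near each label by its average over a ball of radius $r_{\tilde\zeta_n}/2$. Second, and more importantly, even with a legitimate $H^1_\gamma$-bounded surrogate $v_n$ one must still show $v_n(z)\to g(z)$ for $z\in\Gamma$: the pointwise constraint $u_n(z)=g(z)$ is a single-point condition that is invisible to $TL^2$ convergence and is destroyed by mollification and averaging. This is precisely where $\zeta_n\gg n\eps_n^2$ enters: it forces $\dashint_{B(z,\eps_n)}|u_n\circ T_n-g(z)|^2\lesssim n\eps_n^2/\zeta_n\ll 1$, and then Lemma \ref{lem:avgest} is needed to propagate smallness from the graph scale $\eps_n$ up to the scale $r_{\tilde\zeta_n}$ at which the average defining $v_n(z)$ is taken (this is why Proposition \ref{prop:Gamma} carries the extra hypotheses $\zeta_n\gg\eps_n^{2-d}$ for $d>2$, resp.\ $\zeta_n\gg-\ln\eps_n$ for $d=2$, which in the setting of Theorem \ref{thm:conv} follow from $\alpha>d-2$ and the lower bound on $\eps_n$). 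None of this chain (Lemmas \ref{lem:disc-nonlocal}, \ref{lem:nonlocal-wloc}, \ref{lem:wloca-loc}, \ref{lem:avgest}) appears in your proposal, and without it the constraint-preservation claim is unproved.

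Relatedly, your explanation of the hypothesis $\zeta_n\gg n\eps_n^2$ is incorrect. You place it in the recovery sequence, claiming a cross-term of size $O(\zeta_n\, n\eps_n^2\cdot\eps_n^2)$ that vanishes provided ``$\zeta_n\eps_n^2\to 0$''. But $\zeta_n\gg n\eps_n^2$ is a \emph{lower} bound on $\zeta_n$ (the theorem even allows $\zeta_n=\infty$), so it cannot yield $\zeta_n\eps_n^2\to 0$, and the two conditions can be mutually exclusive. In fact the recovery sequence needs no condition on $\zeta_n$ at all: for $v\in\mathcal S$ constant on a fixed ball $B(z,s)$ around each $z\in\Gamma$, every edge of length at most $2\eps_n<s/2$ with both endpoints in $B(z,s)$ contributes zero, and edges crossing $\partial B(z,s)$ live where $\gamma$ is bounded by $1+(2r_0/s)^\alpha$ independently of $n$. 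The condition $\zeta_n\gg n\eps_n^2$ is used only in the liminf/constraint-preservation direction described above.
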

 Our approach to proving the theorem is via establishing the $\Gamma$-convergence of the discrete constrained functionals to the continuum ones. The overall approach to consistency of learning algorithms follows the one developed in \cite{GTS16, GTS17jmlr}. Ensuring that the discrete problem  induces enough regularity for one to be able to show that the label values are preserved in the limit at points of $\Gamma$ follows the general strategy of \cite{SleTho17plap}. However the problems and proofs are rather different. We remark that one can also use the PDE-based approach of \cite{calder2017game}, but this would require a slightly more restrictive  range on $\eps_n$. Nevertheless the PDE-based approach gives superior regularity of solutions which we exploit in Section \ref{sec:reg}.

\begin{proof}
Since, \red almost surely \nc  $\eps_n \gg d_\infty(\mu, \mu_n) \gtrsim n^{-1/d}$ it follows that $\zeta_n \to \infty$ as $n \to \infty$. 
\red We note that by discrete comparison principle $\|u_n\|_{L^\infty(\mu_n)} \leq \max_{\Gamma} \|g\|$. \nc
By Lemma \ref{lem:Gamma1}, the discrete energy $\GE_{n,\eps_n, \zeta_n}$ $\Gamma$-converges to $\theta_\eta \E$ and \red the sequence $\{(\mu_n, u_n)\}_{n=1,2, \dots}$ is precompact in $TL^2$. Therefore $(\mu_n, u_n)$ converges along a subsequence in $TL^2$ metric to $(\tilde \mu, u)$. Since $\mu_n$ converges to $\mu$ in Wasserstein metric, $\tilde \mu = \mu$. The fact that $u$ is the minimizer of \eqref{eq:continuum} now follows directly from $\Gamma$-convergence of Proposition \ref{prop:Gamma} below. Consequently, the fact that the whole sequence $u_n$ converges to $u$  follows from the uniqueness of the minimizer of \eqref{eq:continuum}. 
\end{proof}

\red
\begin{remark}
While above we address only algebraically growing weights $\gamma$ (see \eqref{eq:g})  it is straightforward to modify the proofs to show that if $\gamma$ grows faster than algebraically at labeled points (say $\gamma(x) = \exp(1/\dist(x,\Gamma)$) the conclusion of the theorem hold (in any dimension $d \geq 2$). 
\end{remark}

\begin{remark}
In this paper we assume that the data measure is supported on the set $\Omega$ of full dimension. There are no substantial obstacles in extending the results to the manifold setting where the data are sampled from a measure which is supported on a smooth submanifold of $\R^d$. One would only need to adjust the statements using manifold analogues of the weighted Dirichlet energy and the Laplacian. The convergence of graph Laplacian in the manifold setting has already been established in the standard setting \cite{GGHS}. In the manifold setting the dimension $d$ in the results above should be replaced by the dimension of the data manifold. 
\end{remark}
\nc

\begin{proposition} \label{prop:Gamma}
Let $\eps_n$ be a sequence of positive numbers converging to zero as $n \to \infty$ and such that 
$\eps_n \gg d_\infty(\mu, \mu_n)$.
Let $\zeta_n \in (1, \infty]$ be such that $\zeta_n \gg n \eps_n^2$ and \red $\zeta_n \gg \eps_n^{2-d}$ if $d >2$ and $\zeta_n \gg -\ln \eps_n$ if $d=2$\nc.
Let $\alpha>d-2$. 
Then the constrained properly-weighted graph Dirichlet energy, defined on $TL^2(\Omega)$ by 
\[ \GE^{con}_{n,\eps_n, \zeta_n}(\tilde \mu_n, u_n) = 
  \begin{cases}
    \GE_{n,\eps_n, \zeta_n}(u_n) \quad & \te{if } \tilde \mu_n = \mu_n \te{ and } u_n = g \te{ on } \Gamma \\
    \infty & \te{else}  
  \end{cases}
\]
$\Gamma$-converges \red almost surely \nc in $TL^2$ to the constrained continuum properly-weighted Dirichlet energy
\[ \theta_\eta \E^{con}(\tilde \mu, u) = 
  \begin{cases}
    \theta_\eta \E(u)\quad & \te{if } \tilde \mu = \mu, \; u \in H^1_\gamma(\Omega) \te{ and } u = g \te{ on } \Gamma \\
    \infty & \te{else},
  \end{cases}
\]
where the value of $u$ on $\Gamma$ is considered in the sense of the trace and 
\[ \theta_\eta = \frac{1}{d} \int_{\R^d} \eta(z) |z|^2 dz. \]
\end{proposition}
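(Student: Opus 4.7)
The plan is to verify $\Gamma$-convergence in $TL^2$ via the two standard inequalities: the $\Gamma$-liminf and the existence of a recovery sequence. In both directions the argument reduces, away from $\Gamma$, to the standard nonlocal-to-local $\Gamma$-convergence of Garcia-Trillos--Slep\v cev \cite{GTS16, GTS17jmlr} applied with the smooth bounded weight $\gamma(x)\rho(x)^2$; near $\Gamma$ the scaling hypotheses on $\zeta_n$ enter through separate estimates tailored to the Dirichlet data.

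\textbf{Liminf.} Consider $(\mu_n, u_n) \to (\mu, u)$ in $TL^2$ with $\liminf_n \GE^{con}_{n,\eps_n,\zeta_n}(\mu_n, u_n) < \infty$; necessarily $u_n = g$ on $\Gamma$. For any open $U$ with $\bar{U} \subset \Omega \setminus \Gamma$, the weight $\gamma$ is smooth and bounded on $U$, and since $\zeta_n \to \infty$ one has $\gamma_{\zeta_n} = \gamma$ on $U$ eventually. The standard GTS liminf applied to the kernel $\gamma(x)\eta_{\eps_n}(x-y)$ on $U$ yields
\[ \theta_\eta \int_U \gamma|\nabla u|^2\rho^2\, dx \;\leq\; \liminf_n\GE_{n,\eps_n,\zeta_n}(u_n), \]
and exhausting $\Omega\setminus\Gamma$ by such $U$ gives $u \in H^1_\gamma(\Omega)$ together with the bulk liminf bound. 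The crux is then to check $\Tr[u](z) = g(z)$ for each $z \in \Gamma$. Isolating in the energy the pairs with $x = z$, and using $\gamma_{\zeta_n}(z) = \zeta_n$ and $u_n(z) = g(z)$, I get
\[ \frac{\zeta_n}{n^2\eps_n^2}\sum_{y\in \X_n}\eta_{\eps_n}(z-y)(g(z) - u_n(y))^2 \;\leq\; \GE_{n,\eps_n,\zeta_n}(u_n) \leq C.\]
Using $\eta_{\eps_n} \geq \eps_n^{-d}$ on $B(z,\eps_n)$ together with $\mu_n(B(z,\eps_n)) \geq c\eps_n^d$ almost surely (from $\eps_n \gg d_\infty(\mu,\mu_n) \gtrsim n^{-1/d}$), this yields
\[ \dashint_{B(z,\eps_n)}(g(z)-u_n)^2\, d\mu_n \;\lesssim\; \frac{n\eps_n^2}{\zeta_n} \;\longrightarrow\; 0,\]
where the hypothesis $\zeta_n \gg n\eps_n^2$ is decisive. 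To upgrade this microscopic concentration to the continuum trace, I apply a discrete analogue of the dyadic scale estimate \eqref{eq:basicineq} of Lemma \ref{lem:lebesgue}, controlling $|(u_n)^{\mu_n}_{z,r} - (u_n)^{\mu_n}_{z,\eps_n}|$ by $C r^{(\alpha+2-d)/2}$ times the local discrete $H^1_\gamma$-seminorm extracted from the energy, pass $n \to \infty$ at fixed $r$ using $TL^2$ convergence of bulk averages to $\mu(B(z,r))^{-1}\int_{B(z,r)}u\,d\mu$, and send $r \to 0$ via Lemma \ref{lem:lebesgue} for $u$. The auxiliary scaling $\zeta_n \gg \eps_n^{2-d}$ (respectively $\zeta_n \gg -\ln\eps_n$ for $d=2$) ensures the discrete dyadic interpolation yields constants matching the continuum exponent uniformly in $n$.

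\textbf{Recovery sequence.} For $u \in H^1_\gamma(\Omega)$ with $\Tr[u] = g$ on $\Gamma$, Corollary \ref{cor:dense} furnishes $u_k \in \mathcal{S}$ with $u_k = g$ on $\Gamma$ (via the splitting $u_k = \varphi_k + \psi$ used in its proof) and $u_k \to u$ in $H^1_\gamma(\Omega)$. By definition of $\mathcal{S}$, each $u_k$ is smooth on $\bar{\Omega}$ and constantly equal to $g(z)$ on a ball $B(z, s_k)$ for every $z \in \Gamma$. Set $u_{k,n} := u_k|_{\X_n}$; then $u_{k,n} = g$ on $\Gamma$ automatically. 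On $\{\dist(\cdot,\Gamma) \geq s_k\}$ the weight $\gamma$ is smooth and bounded and coincides with $\gamma_{\zeta_n}$ for $n$ large, so the standard GTS discrete-to-continuum convergence supplies the correct bulk limit. Pairs $(x,y)$ with $x \in B(z, s_k)$ contribute nothing unless $y$ lies in the thin annulus $B(z, s_k + 2\eps_n) \setminus B(z, s_k)$; there $|u_k(x) - u_k(y)| \leq C\eps_n$ and $\gamma_{\zeta_n}(x)$ is bounded (once $\zeta_n$ exceeds $\gamma$ at distance $s_k/2$), producing a contribution of order $\eps_n \to 0$. Hence $\limsup_n \GE_{n,\eps_n,\zeta_n}(u_{k,n}) \leq \theta_\eta\E(u_k)$, which converges to $\theta_\eta\E(u)$ as $k \to \infty$. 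A standard diagonal extraction $k = k(n) \to \infty$ then produces $u_n := u_{k(n),n}$ converging in $TL^2$ to $u$ and satisfying the required limsup bound.

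\textbf{Main obstacle.} The most delicate piece of the argument is the trace identification in the liminf step: concluding $\Tr[u](z) = g(z)$ requires tracking averages of $u_n$ simultaneously at the microscopic scale $\eps_n$ (where the $\zeta_n$-penalty delivers concentration) and at a fixed macroscopic scale $r$ (where $TL^2$ convergence becomes usable), bridged by a discrete version of the weighted Poincar\'e dyadic estimate from Lemma \ref{lem:lebesgue}. Making this bridge uniform in $n$ is precisely what forces both scaling conditions on $\zeta_n$ to appear together.
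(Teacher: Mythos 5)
Your architecture matches the paper's: the recovery sequence via density of $\mathcal S$ (Corollary \ref{cor:dense}) with a diagonal argument, the bulk liminf via the standard nonlocal-to-local results of \cite{GTS16}, and the microscopic concentration estimate $\dashint_{B(z,\eps_n)}(u_n-g(z))^2\,d\mu_n\lesssim n\eps_n^2/\zeta_n$ obtained by isolating the edges incident to the labeled point (this is exactly \eqref{eq:oscest} in the paper). The limsup half and the concentration step are correct as written.

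The gap is in the multi-scale bridge from scale $\eps_n$ to the continuum trace. You assert a ``discrete analogue of the dyadic scale estimate \eqref{eq:basicineq}'' controlling $|(u_n)_{z,r}-(u_n)_{z,\eps_n}|$ by $Cr^{(\alpha+2-d)/2}$ times the local discrete $H^1_\gamma$-seminorm, but such an estimate is neither constructed nor, as stated, true: the discrete energy carries the truncated weight $\gamma_{\zeta_n}$, which equals the constant $\zeta_n$ on the ball $B(z,r_{\zeta_n})$, so inside that region the energy controls only an (essentially unweighted) Dirichlet integral divided by $\zeta_n$, not the $\dist(\cdot,\Gamma)^{-\alpha}$-weighted seminorm on which the exponent $(\alpha+2-d)/2$ in Lemma \ref{lem:lebesgue} depends. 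This is precisely why the hypothesis $\zeta_n\gg\eps_n^{2-d}$ (resp.\ $\gg-\ln\eps_n$) has that specific form: the paper bridges the annulus between scale $\eps_n$ and scale $r_{\tilde\zeta_n}$ with the \emph{unweighted} Green's-function estimate of Lemma \ref{lem:avgest}, whose error is $\tilde\zeta_n^{-1/2}\eps_n^{(2-d)/2}$, and only beyond $r_{\tilde\zeta_n}$ does it switch to the $\gamma$-weighted trace machinery. Moreover, to run any Poincar\'e-type argument at intermediate scales one must first convert $u_n$ into a genuine $H^1$ function; the paper does this through the chain of Lemmas \ref{lem:disc-nonlocal} (transport maps), \ref{lem:nonlocal-wloc} (mollification), and \ref{lem:wloca-loc} (modification to be constant on $B(z,r_{\tilde\zeta_n}/2)$), after which the continuum Trace Theorem \ref{thm:trace} applies. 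Your plan omits this conversion in the liminf step entirely, so the central technical step remains an unproven—and in its stated form incorrect—assertion.
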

\red The proof of the $\Gamma$ convergence of the unconstrained functionals follows from known results in a straightforward way. We state and prove it in a separate lemma  below. 
The real difficulty is in proving that the constraints are preserved in the limit. Since the $TL^2$ topology alone is not sufficient to ensure this, we need to establish some control of oscillations near the labeled points. This relies on on several technical lemmas which are of some independent interest. We state them in the subsection \ref{sec:dcestimates}. The proof of Proposition \ref{prop:Gamma} is presented in subsection \ref{sec:propproof}. 
 \nc

\begin{lemma} \label{lem:Gamma1}
Assume $\alpha>0$ and $\zeta_n \geq 1$. Under assumptions on $\X_n$ and $\eps_n$ of Proposition \ref{prop:Gamma}, the discrete energy $\GE_{n, \eps_n, \zeta_n}$ $\;\Gamma$-converges almost surely with respect to $TL^2$ topology to the energy $\theta_\eta \E$, defined in 
\eqref{eq:Econtinuum} as $n \to \infty$ if $\zeta_n \to \infty$.  \red Furthermore let $\{u_n \}_{n=1,2, \dots}$ be a sequence such that  $\sup_n \GE_{n, \eps_n, \zeta_n} < \infty$ and 
$\sup_n \|u_n\|_{L^\infty(\mu_n)} < \infty$.  Then $\{(\mu_n,u_n) \::\: n \in \N\}$ is precompact in $TL^2\!.$ \nc
\end{lemma}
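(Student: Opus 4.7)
The plan is to prove the lemma by the standard $\Gamma$-convergence scheme, namely establishing a liminf inequality, a limsup (recovery sequence) inequality, and compactness, following the nonlocal-to-local framework of \cite{GTS16}. The key observation enabling the use of standard tools is that $\gamma_{\zeta_n}$ is bounded (by $\zeta_n$) and coincides with $\gamma$ on any fixed compact subset of $\bar\Omega\setminus\Gamma$ once $n$ is large. The role of $\zeta_n\to\infty$ is precisely to expose the true weight $\gamma$ on larger and larger portions of $\Omega$, while $|\Gamma|=0$ means nothing is lost by working away from the singular set.

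For the liminf inequality, suppose $(\mu_n,u_n)\to(\mu,u)$ in $TL^2$ with bounded liminf of the energy. I would localize by cutoffs $\psi\in C_c^\infty(\bar\Omega\setminus\Gamma)$ with $0\le\psi\le 1$. On $\supp\psi$ the weight $\gamma$ is bounded, so $\gamma_{\zeta_n}\equiv\gamma$ there for all large $n$, and
\[
\GE_{n,\eps_n,\zeta_n}(u_n)\;\ge\;\frac{1}{2n^2\eps_n^2}\sum_{x,y\in \X_n}\bigl(\psi(x)\gamma(x)+\psi(y)\gamma(y)\bigr)\eta_{\eps_n}(x-y)|u_n(x)-u_n(y)|^2.
\]
Because $\psi\gamma\rho^2$ is continuous and bounded on $\bar\Omega$, the standard weighted nonlocal-to-local liminf inequality of \cite{GTS16} applies to the right-hand side and gives a lower bound of $\theta_\eta\int_\Omega\psi\gamma|\nabla u|^2\rho^2\,dx$. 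Choosing a sequence $\psi_k\nearrow 1$ on $\Omega\setminus\Gamma$ and passing to the limit via monotone convergence (using $|\Gamma|=0$) yields $\liminf\GE_{n,\eps_n,\zeta_n}(u_n)\ge\theta_\eta\E(u)$, which in particular forces $u\in H^1_\gamma(\Omega)$.

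For the limsup, I would use Corollary~\ref{cor:dense} to approximate any $u\in H^1_\gamma(\Omega)$ by a sequence $u_k\in\mathcal S$ with $u_k\to u$ in $H^1_\gamma(\Omega)$, so in particular $\E(u_k)\to\E(u)$. Each $u_k$ is constant on some ball $B(x,s_k)$ around every $x\in\Gamma$, so $\nabla u_k$ is supported away from $\Gamma$ and $\gamma$ is bounded on that support. For each fixed $k$, I would transport $u_k$ to the sample: take $T_n$ optimal for $d_\infty(\mu,\mu_n)$ (so $\|T_n-\Id\|_\infty\ll\eps_n$ almost surely) and define the candidate recovery sequence $\tilde u_n^k:=u_k\circ T_n$. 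For $n$ large enough that $\eps_n<s_k/4$ and $\zeta_n$ exceeds $\sup_{\supp\nabla u_k}\gamma$, pairs containing a point within $s_k/2$ of $\Gamma$ contribute zero to the discrete energy by constancy of $u_k$, while on the rest of the domain $\gamma_{\zeta_n}\equiv\gamma$ is bounded and smooth. The standard limsup construction of \cite{GTS16} applied off a neighborhood of $\Gamma$ then gives $\limsup_n\GE_{n,\eps_n,\zeta_n}(\tilde u_n^k)\le\theta_\eta\E(u_k)$, and a diagonal argument in $k$ produces the sought recovery sequence.

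Compactness is the easiest piece: since $\gamma_{\zeta_n}\ge 1$, any sequence with $\sup_n\GE_{n,\eps_n,\zeta_n}(u_n)<\infty$ has bounded unweighted graph Dirichlet energy, and the assumed $L^\infty(\mu_n)$ bound gives a uniform $L^2(\mu_n)$ bound, so the $TL^2$-compactness result of \cite{GTS16} applies directly. The principal obstacle throughout is controlling the singularity of $\gamma$ near $\Gamma$ when passing to the nonlocal-to-local limit; the strategy handles it by combining truncation at $\zeta_n$ (which keeps the discrete weights bounded and allows off-the-shelf convergence on compact pieces of $\bar\Omega\setminus\Gamma$) with density of $\mathcal S$ from Corollary~\ref{cor:dense} (which provides approximations that trivially annihilate interactions in a small neighborhood of $\Gamma$, where the weight would otherwise be most delicate).
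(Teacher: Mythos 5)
Your proof is correct and follows essentially the same route as the paper's: reduce to the known nonlocal-to-local $\Gamma$-convergence for bounded weights via a monotone approximation of $\gamma$ (the paper truncates $\gamma$ in \emph{value} to $\gamma_{\zeta_k}$ and takes a supremum over $k$, whereas you truncate in \emph{space} with cutoffs supported away from $\Gamma$ — equivalent, up to replacing $\psi_k\gamma$ by $\psi_k(\gamma-1)+1$ so that the localized weight remains bounded below by $1$ as the cited results require), then use density of $\mathcal S$ from Corollary~\ref{cor:dense} plus diagonalization for the recovery sequence, and comparison with the unweighted case $\gamma\equiv 1$ for compactness. No gaps.
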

\begin{proof}
From results in the literature \cite{GTS16, GTS18spectral} it follows that for any fixed $\zeta>0$ the discrete energies $\GE_{n, \eps_n, \zeta}$ $\:\Gamma$-converge to $\theta_\eta \E(\tacka; \gamma_\zeta)$ as $n \to \infty$, under standard assumptions on $\eps_n$. 
To show the liminf inequality for general  $\zeta_n$ consider a sequence $(\mu_n,u_n)$ $TL^2$ converging to $(\mu,u)$. For any fixed $k$,
\[ \liminf_{n \to \infty} \GE_{n, \eps_n, \zeta_n}(u_n) \geq \theta_\eta  \E(u; \zeta_k), \]
\red where $\E(u; \zeta_k)$ is given by \eqref{eq:Econtinuum} with $\gamma$ replaced by 
$\gamma_{\zeta_k}$, \nc
which implies the desired inequality by taking supremum over $k$. 
The limsup inequality follows by a simple diagonalization argument. 

We recall from the literature (e.g \cite{GTS16} or Proposition 4.4 of \cite{SleTho17plap}) that the precompactness of bounded sequences   with bounded energies already holds for the weight $\gamma \equiv 1$. Thus  the precompactness for $\GE_{n, \eps_n, \zeta_n}$ follows by comparison. 
\end{proof}

\subsubsection{The negative result} 
\begin{proposition} \label{prop:Gamma_subcrit}
Let $\eps_n$ be a sequence of positive numbers converging to zero as $n \to \infty$ and such that 
$\eps_n \gg d_\infty(\mu, \mu_n)$.
Let $\zeta_n \geq 1$ be a sequence converging to infinity. 
Consider $\alpha \leq d-2$. 
Then the constrained energy $    \GE_{n,\eps_n, \zeta_n}$,  defined in Proposition \ref{prop:Gamma}, 
$\,\Gamma$-converges \red almost surely \nc  in $TL^2$ metric to the unconstrained continuum energy $\theta_\eta \E$.
\end{proposition}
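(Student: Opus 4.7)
The plan is to establish both halves of $\Gamma$-convergence. The liminf inequality is essentially immediate: along any $TL^2$-convergent sequence $(\mu_n,u_n) \to (\mu,u)$ for which $\GE^{con}_{n,\eps_n,\zeta_n}(\mu_n,u_n) < \infty$ infinitely often, one has $u_n=g$ on $\Gamma$ and $\GE^{con}_{n,\eps_n,\zeta_n}(\mu_n,u_n) = \GE_{n,\eps_n,\zeta_n}(u_n)$, so Lemma \ref{lem:Gamma1} already gives $\liminf_n \GE_{n,\eps_n,\zeta_n}(u_n) \geq \theta_\eta \E(u)$. Notice that no constraint appears on the target $u$, which is the whole point of the ``negative'' phrasing: for $\alpha\leq d-2$ the trace of an $H^1_\gamma$-function at an isolated point is not meaningful, so the discrete constraint is lost in the limit.

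The content of the proposition lies in the limsup (recovery) inequality: given $u \in H^1_\gamma(\Omega)$, one must construct $u_n \in L^2(\X_n)$ with $u_n = g$ on $\Gamma$, $(\mu_n,u_n) \to (\mu,u)$ in $TL^2$, and $\limsup_n \GE_{n,\eps_n,\zeta_n}(u_n) \leq \theta_\eta \E(u)$. The heuristic is that for $\alpha\leq d-2$ a single point has zero $\gamma$-capacity, so forcing a competitor to take a prescribed value at the finitely many points of $\Gamma$ is energetically free in the limit. I would first invoke the density of the class $\mathcal S$ from Corollary \ref{cor:dense} (case 2 of its proof is exactly the regime $\alpha\leq d-2$) to reduce, via a diagonal argument, to a smooth $\psi\in\mathcal S$ that is identically some constant $c_y$ on a ball $B(y,s)$ around each $y\in\Gamma$. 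For such $\psi$ and $a\in(0,s/2)$ I would introduce a radial capacitary cutoff $\phi_a$ with $\phi_a\equiv 0$ on $B(0,a)$, $\phi_a\equiv 1$ outside $B(0,s)$, chosen as the radial minimizer of $\int|x|^{-\alpha}|\nabla\phi|^2\,dx$ subject to those boundary conditions. A direct Euler-Lagrange computation on the radial ODE yields the capacity bounds
\[
\int_{\R^d}|x|^{-\alpha}|\nabla\phi_a|^2\,dx \;\leq\; C\, a^{d-2-\alpha} \; (\alpha<d-2), \qquad \int_{\R^d}|x|^{-\alpha}|\nabla\phi_a|^2\,dx \;\leq\; \frac{C}{\log(s/a)}\; (\alpha=d-2),
\]
both of which vanish as $a\to 0$. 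Setting
\[
\tilde\psi_a(x) := \psi(x) + \sum_{y\in\Gamma}(g(y)-c_y)\bigl(1-\phi_a(x-y)\bigr)
\]
produces a continuum competitor with $\tilde\psi_a\equiv g(y)$ on each $B(y,a)$, $\tilde\psi_a\equiv\psi$ outside $\bigcup_y B(y,s)$, and $\tilde\psi_a\to\psi$ in $H^1_\gamma(\Omega)$ as $a\to 0$ thanks to the capacity estimate and dominated convergence.

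To finish I would discretize by $u_n := \tilde\psi_{a_n}|_{\X_n}$ for a sequence $a_n\to 0$ chosen with $a_n\gg\eps_n$. By construction $u_n(y)=g(y)$ at every $y\in\Gamma$. The standard nonlocal-to-local recovery argument of \cite{GTS16}, combined with the pointwise bound $\gamma_{\zeta_n}\leq\gamma$, gives $(\mu_n,u_n)\to(\mu,\psi)$ in $TL^2$ and $\limsup_n \GE_{n,\eps_n,\zeta_n}(u_n)\leq \theta_\eta\int_\Omega\gamma|\nabla\tilde\psi_{a_n}|^2\rho^2\,dx$; letting $a_n\to 0$ the right-hand side converges to $\theta_\eta\E(\psi)$, and a final diagonal argument combining $\psi\to u$ in $H^1_\gamma$ with $a_n\to 0$ closes the construction. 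The main obstacle I foresee is the coordination of the three scales $\eps_n$, $a_n$, and $\zeta_n$, since only $\zeta_n\to\infty$ is assumed here with no quantitative rate (in contrast to Proposition \ref{prop:Gamma}); the saving grace is that capping $\gamma$ from above only decreases the discrete energy, while the capacitary bound for $\phi_a$ is a purely continuum quantity independent of $\zeta_n$, so choosing $a_n$ sufficiently slowly relative to $\eps_n$ suffices to close the estimates.
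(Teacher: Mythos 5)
Your proposal is correct and follows essentially the same route as the paper: the liminf half is taken directly from Lemma \ref{lem:Gamma1}, and the limsup half is obtained by correcting the label values with a cutoff of vanishing weighted capacity followed by a diagonal argument. The only (interchangeable) difference is the choice of cutoff --- the paper truncates and mollifies $\ln\ln(1/|x|)$, whereas you use the explicit radial capacitary potential with the rates $a^{d-2-\alpha}$ (resp.\ $1/\log(s/a)$ when $\alpha=d-2$).
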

\begin{proof}
The liminf part  of the $\Gamma$-convergence claim follows from the liminf claim of Lemma \ref{lem:Gamma1}. 

To show the limsup inequality, we first observe that by localizing near the points of $\Gamma$, and given that limsup inequality holds for the unconstrained functional, the problem can be reduced to considering $\Gamma = \{0\}$, $u\equiv 0$, and the construction a sequence of functions $u_n \in L^2(\mu_n)$ such that $u_n(0)=1$, $ \GE_{n,\eps_n, \zeta_n}(u_n) \to 0$ as $n \to \infty$ and $u_n \to 0$ in $TL^2$ as $n \to \infty$.

We now  make some observation about the continuum functional. Namely when $\alpha \leq d-2$ then the function $ \varphi(x) = \ln \left(\ln \left(\frac{1}{|x|}\right) \right)$ belongs to $H^1_\gamma(B(0,1))$. Let $w_k = \max\{ \min\{\frac{1}{k} \varphi(x),1\}, 0\}$. Let $r_k>0$ be such that $w_k =1$ on $B(0,r_k)$. By mollifying we can obtain a smooth approximation $v_k$, $v_k =1$ on $B(0, r_k/2)$ and $\| v_k \|_{H^1_\gamma(B(0,1))} \leq 2 \| u_k \|_{H^1_\gamma(B(0,1))}$. 
Arguing as in  Section~5 of~\cite{GTS16}, if one defines for each $k \in \N$, a sequence $u^k_n \in L^2(\mu_n)$ by \red  $u^k_n(x_i) = v_k(x_i)$ for all $x_i \in X_n$ \nc  one has 
$u^k_n \to \red  v_k \nc$ in $TL^2$ and $\limsup_{n \to \infty}   \GE_{n,\eps_n, \zeta_n}(u^k_n) \geq \theta_\eta \E(\red v_k) \nc$. Since $v_k \to 0$ in $H^1_\gamma(B(0,1))$ as $k \to \infty$, the conclusion follows by a diagonalization argument. 
\end{proof}

\begin{corollary} \label{cor:neg0}
Let $\eps_n$ be a sequence of positive numbers converging to zero as $n \to \infty$ and such that 
$\eps_n \gg d_\infty(\mu, \mu_n)$.
Let $\zeta_n \geq 1$ be a sequence converging to $\infty$ as $n \to \infty$. Let $\alpha \leq d-2$. 
Let $u_n$ be a sequence of minimizers of the problem \eqref{eq:LapLearn} for $\GE_{n,\eps_n, \zeta_n}$. Let $c_n$ be the average of $u_n$ (with respect to measure $\mu_n$). Then \red almost surely \nc  $(\mu_n, u_n-c_n)$ converges in $TL^2$ to $(\mu,0)$; in other words the information about the labels is forgotten in the limit.
\end{corollary}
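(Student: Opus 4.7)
My plan is to exploit that Proposition \ref{prop:Gamma_subcrit} identifies the $\Gamma$-limit of the \emph{constrained} discrete energy with the \emph{unconstrained} continuum Dirichlet energy $\theta_\eta \E$, whose infimum is $0$, attained at every constant function. The key observation is that $\GE_{n,\eps,\zeta}$ is invariant under adding a constant to $u$ in \eqref{eq:energy}, so the shifted minimizers $v_n := u_n - c_n$ satisfy $\GE_{n,\eps_n,\zeta_n}(v_n) = \GE_{n,\eps_n,\zeta_n}(u_n)$. What must be shown is that this common energy tends to $0$ and that $v_n$ has the zero function as its unique $TL^2$ cluster point.

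First I would establish a uniform $L^\infty$ bound on $v_n$. The discrete comparison principle applied to \eqref{eq:optimality} gives $\|u_n\|_{L^\infty(\mu_n)} \leq \max_\Gamma |g|$; since $c_n$ is the $\mu_n$-average of $u_n$, it lies in the same interval, so $\|v_n\|_{L^\infty(\mu_n)} \leq 2 \max_\Gamma |g|$ uniformly in $n$.

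Next I would feed the zero function, which lies in $H^1_\gamma(\Omega)$ with energy $0$, into the limsup (recovery sequence) part of Proposition \ref{prop:Gamma_subcrit}. This produces $\tilde u_n \in L^2(\mu_n)$ satisfying $\tilde u_n = g$ on $\Gamma$ with $\GE_{n,\eps_n,\zeta_n}(\tilde u_n) \to 0$. Minimality of $u_n$ then forces $\GE_{n,\eps_n,\zeta_n}(u_n) \leq \GE_{n,\eps_n,\zeta_n}(\tilde u_n) \to 0$, hence $\GE_{n,\eps_n,\zeta_n}(v_n) \to 0$ by translation invariance. Combined with the uniform $L^\infty$ bound, the precompactness statement of Lemma \ref{lem:Gamma1} yields a subsequence along which $(\mu_n, v_n) \to (\mu, v)$ in $TL^2$, and the liminf inequality of the same lemma gives $\theta_\eta \E(v) = 0$. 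Since $\gamma \rho^2$ is bounded below by a positive constant, this forces $\nabla v = 0$ a.e.~in $\Omega$, so $v$ is constant on each connected component.

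Finally, I would identify the constant using the mean-zero condition. Uniform $L^\infty$ bounds together with $TL^2$ convergence of $(\mu_n, v_n)$ to $(\mu, v)$ imply $\int v_n \, d\mu_n \to \int v \, d\mu$, which together with $\int v_n \, d\mu_n = 0$ gives $\int v \, d\mu = 0$; this forces $v \equiv 0$ (in the connected case, which is the standard setting). Uniqueness of the $TL^2$ subsequential limit upgrades the conclusion to the full sequence. The step I anticipate to be most delicate is the passage of the mean-zero condition to the $TL^2$ limit: although it is standard given the $L^\infty$ control, it requires invoking the transport-map definition of $TL^2$ and the almost-sure Wasserstein convergence $\mu_n \to \mu$ explicitly.
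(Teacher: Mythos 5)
Your proposal is correct and follows essentially the same route as the paper's proof: a uniform $L^\infty$ bound from the maximum principle, the recovery sequence from Proposition \ref{prop:Gamma_subcrit} (for a zero-energy limit function) to force the minimal energies to vanish, precompactness and the liminf inequality from Lemma \ref{lem:Gamma1} to identify subsequential limits as constants, and the mean-zero condition to pin the constant to zero. The only cosmetic difference is that the paper phrases the argument as a contradiction with a bad subsequence, whereas you argue directly via uniqueness of subsequential limits; these are equivalent.
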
 
\begin{proof}
Assume the claim is false. Then there exists $\delta>0$ and a subsequence $u_{n_j}$ such that foe all $j$, $d_{TL^2}((\mu_{n_j},u_{n_j}-c_{n_j}), (\mu,0)) > \delta$ for all $j$. By the maximum principle functions  $u_n$ are bounded by extremal values of $g$. Consequently, by Lemma \ref{lem:Gamma1},  $u_{n_j}-c_{n_j}$ has a further convergent subsequence. Without a loss of generality we can assume that  $u_{n_j}-c_{n_j}$ converges to some $v \in L^2(\mu)$. Then $\int v d \mu = \lim_{j \to \infty} \int u_{n_j} - c_{n_j} d \mu_{n_j} = 0$. 

By the limsup part of $\Gamma$-convergence of Proposition \ref{prop:Gamma_subcrit} there exists a sequence $v_n \in L^2(\mu_n)$ such that $\GE_{n,\eps_n, \zeta_n}^{con}(v_n) \to 0$ as $n \to \infty$. Since $u_{n_j}$ are minimizers 
$\GE_{n,\eps_n, \zeta_n}(u_{n_j}-c_{n_j}) \to 0$ as $n \to \infty$. We conclude by the liminf part of $\Gamma$-convergence that $\E(v) = 0$. Since $\int v d \mu =0$ this implies that $v \equiv 0$, which contradicts the assumption about the sequence.
\end{proof}

We note that the analogue of the negative result in Corollary \ref{cor:neg0} for the standard graph Laplacian (corresponding to $\gamma \equiv 1$) was proved in \cite{SleTho17plap}[Theorem 2.1].
The following corollary then follows by the squeeze theorem for $\Gamma$-convergence.
\begin{corollary} \label{cor:gen_neg}
Under the assumptions of Proposition \ref{prop:Gamma_subcrit} consider any sequence of graph based functionals $\mathcal F_n$ such that for $\GE_{n, \eps_n, 1} \leq \mathcal F_n \leq \GE_{n, \eps_n, \zeta_n}$ (where we note that $\GE_{n, \eps_n, 1} $ is just a convenient way to write the standard graph Laplacian). Let  $u_n$ be the minimizers of \eqref{eq:LapLearn} for $\F_n$ and 
let $c_n$ be the average of $u_n$ (with respect to measure $\mu_n$). Then $(\mu_n, u_n-c_n)$ converges \red almost surely \nc   in $TL^2$ to $(\mu,0)$. 
\end{corollary}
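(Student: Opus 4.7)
The plan is to invoke the sandwich (squeeze) theorem for $\Gamma$-convergence, which asserts that if $A_n \leq B_n \leq C_n$ pointwise and both $A_n$ and $C_n$ $\Gamma$-converge to the same limit $F$, then so does $B_n$: the liminf inequality passes through $A_n$, while any recovery sequence for $C_n$ is automatically one for $B_n$. We apply this with $A_n = \GE^{con}_{n,\eps_n,1}$ (the constrained standard graph Dirichlet energy, corresponding to $\gamma\equiv 1$) and $C_n = \GE^{con}_{n,\eps_n,\zeta_n}$; the hypothesis $\GE_{n,\eps_n,1}\leq \F_n \leq \GE_{n,\eps_n,\zeta_n}$ gives the pointwise sandwich on configurations satisfying the Dirichlet constraint.

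By Proposition \ref{prop:Gamma_subcrit}, $C_n\ \Gamma$-converges almost surely in $TL^2$ to the \emph{unconstrained} continuum energy $\theta_\eta \E$. In the regime $\alpha\leq d-2$ (equivalently $d\geq 2$ when $\gamma\equiv 1$), the analogous $\Gamma$-convergence for $A_n$ is the content of \cite{SleTho17plap}[Theorem 2.1]; the logarithmic-capacity recovery sequence used there is essentially the same $\ln\ln(1/|x|)$ construction that appears in the proof of Proposition \ref{prop:Gamma_subcrit}. The sandwich theorem then yields that $\F_n^{con}\ \Gamma$-converges almost surely in $TL^2$ to $\theta_\eta \E$.

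With the $\Gamma$-convergence of the constrained $\F_n^{con}$ to the \emph{unconstrained} limit in hand, I would then repeat the argument of Corollary \ref{cor:neg0} verbatim. The limsup inequality applied to $u\equiv 0$ produces a sequence $v_n$ with $v_n = g$ on $\Gamma$, $(\mu_n,v_n)\to(\mu,0)$ in $TL^2$, and $\F_n(v_n) \to 0$; minimality of $u_n$ then gives $\F_n(u_n)\to 0$. Combining the lower bound $\F_n \geq \GE_{n,\eps_n,1}$ with the discrete Poincar\'e inequality for the standard graph Laplacian (valid almost surely under the scaling $\eps_n\gg d_\infty(\mu,\mu_n)$) yields $\|u_n - c_n\|_{L^2(\mu_n)}\to 0$, which is exactly $TL^2$ convergence of $(\mu_n, u_n - c_n)$ to $(\mu,0)$.

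The only nontrivial ingredient is the $\Gamma$-convergence of the lower endpoint $A_n$ to the unconstrained energy; this must be imported from \cite{SleTho17plap}. Once this is accepted, the sandwich theorem and the previously established machinery deliver the conclusion with no additional analytic work.
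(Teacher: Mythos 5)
Your overall skeleton (upper bound gives a vanishing-energy recovery sequence at $u\equiv 0$, lower bound controls the minimizers) is the same as the paper's one-line appeal to a squeeze argument, but the literal step you lean on is wrong: the two endpoints do \emph{not} $\Gamma$-converge to the same limit, so the sandwich theorem does not apply. The lower endpoint $\GE^{con}_{n,\eps_n,1}$ has weight $\gamma_1\equiv 1$ and $\Gamma$-converges to the \emph{unweighted} Dirichlet energy $\frac{\theta_\eta}{2}\int_\Omega|\nabla u|^2\rho^2\,dx$, whereas the upper endpoint $\Gamma$-converges to $\theta_\eta\E$ with the singular weight $\gamma>1$ near $\Gamma$; these differ on every nonconstant function. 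Consequently your intermediate claim that $\F_n^{con}$ $\Gamma$-converges to $\theta_\eta\E$ is false in general (take $\F_n=\GE_{n,\eps_n,1}$). The argument survives only because you never actually need $\Gamma$-convergence of $\F_n$: what is used is (i) the recovery sequence for the \emph{upper} functional at $u\equiv 0$, where both candidate limits vanish, which together with $\F_n\leq \GE_{n,\eps_n,\zeta_n}$ and minimality forces $\F_n(u_n)\to 0$ and hence $\GE_{n,\eps_n,1}(u_n)\to 0$; and (ii) a liminf-type statement for the \emph{lower} functional to conclude that any limit of $u_n-c_n$ is the zero function. You should restructure the proof around (i) and (ii) and drop the sandwich-theorem claim.

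For step (ii) you invoke a discrete Poincar\'e inequality with a constant uniform in $n$; this is true in the stated scaling regime but is not proved anywhere in the paper and is not a free import --- it requires a lower bound on the spectral gap of the random geometric graph (e.g.\ via the spectral convergence results of \cite{GTS18spectral}). The paper's own route, as in Corollary \ref{cor:neg0}, avoids this: the maximum principle gives $\|u_n\|_{L^\infty(\mu_n)}\leq\max_\Gamma|g|$, Lemma \ref{lem:Gamma1} then yields $TL^2$-precompactness of $(\mu_n,u_n-c_n)$, and the liminf inequality for $\GE_{n,\eps_n,1}$ forces any subsequential limit $v$ to satisfy $\int_\Omega|\nabla v|^2\rho^2\,dx=0$ and $\int v\,d\mu=0$, hence $v\equiv 0$. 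Either finish is acceptable, but if you keep the Poincar\'e route you must supply a reference or proof for the uniform constant.
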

A particular consequence of this corollary is that the minimizers of the algorithm in \cite{shi2017weighted} converge to a constant as $n \to \infty$. 

\bigskip

\subsection{Estimates for the discrete to continuum convergence} \label{sec:dcestimates}

Here we establish several results needed in the proofs of the main results above. We follow a similar strategy as \cite{SleTho17plap}. Let us define the nonlocal continuum energy as
\begin{equation}\label{eq:nlene}
\E_{\eps, \zeta}(u) = \frac{1}{\eps^{2}} \iint \gamma_\zeta(x)\eta_\eps(x-y)|u(x)-u(y)|^2 d \mu(x) d \mu(y).
\end{equation}
It serves as an intermediary between the discrete graph based functionals and the continuum derivative-based functionals. 
\medskip

\begin{lemma}[discrete to nonlocal control]  \label{lem:disc-nonlocal}
Consider $\Omega$, $\mu$, $\eta$, $\zeta$, and $x_i$ as in Theorem \ref{thm:conv}.
Let  $\tilde{\eta}(|x|) = 1$ for $|x|\leq 1$ and $\tilde{\eta}(|x|) = 0$ otherwise, and so $\tilde \eta \leq \eta$.
Let $T_n$ be a sequence of transport maps satisfying the conclusions of Theorem~\ref{thm:TransBound} and let $\tilde{\eps}_n = \eps_n-2\|T_n-\Id\|_{L^\infty(\Omega)}$.
Define $\GE_{n, \eps_n, \zeta_n}(\tacka ;\eta)$ by~\eqref{eq:energy} 
\red and $\E_{\tilde \eps_n, \tilde \zeta_n}(\tacka;  \tilde \eta)$ be \eqref{eq:nlene}, \nc 
 where we explicitly denote the dependence of $\eta$.
Let $\tilde \zeta_n >0$ be such that $\zeta_n \geq \tilde\zeta_n\;$ and 
\red
$ \left(\frac{r_0}{2C} \right)^\alpha \ell_n^{-\alpha} \geq \tilde\zeta_n\;$ where $C$ is the constant from Theorem~\ref{thm:TransBound} and $\ell_n$ is the transportation length scale from the same theorem. \nc
Then there exists $n_0 \in \N$ and a constant $\overline C>0$ (independent of $n$ and $u_n$) such that for all $n\geq n_0$
\[\E_{\tilde \eps_n, \tilde \zeta_n}(u_n\circ T_n;  \tilde \eta) \leq  \overline C\, \GE_{n,\eps_n,\zeta_n}(u_n; \eta)\]
\end{lemma}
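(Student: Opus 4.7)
My plan is to change variables in the continuum nonlocal energy by means of the transport map $T_n$, thereby reducing it to a double sum over $\X_n\times \X_n$ that can be controlled term by term by the discrete energy $\GE_{n,\eps_n,\zeta_n}(u_n;\eta)$. Writing $U_i := T_n^{-1}(\{x_i\})$, so that $\mu(U_i)=1/n$, and $\delta_n := \|T_n - \Id\|_{L^\infty(\Omega)}$, I start from
\[ \E_{\tilde\eps_n,\tilde\zeta_n}(u_n\circ T_n;\tilde\eta) = \frac{1}{\tilde\eps_n^2}\sum_{i,j=1}^n |u_n(x_i)-u_n(x_j)|^2 \int_{U_i}\!\int_{U_j}\gamma_{\tilde\zeta_n}(x)\,\tilde\eta_{\tilde\eps_n}(x-y)\,d\mu(x)\,d\mu(y), \]
so the task reduces to establishing the per-cell bound
\[ \int_{U_i}\!\int_{U_j}\gamma_{\tilde\zeta_n}(x)\,\tilde\eta_{\tilde\eps_n}(x-y)\,d\mu(x)\,d\mu(y) \leq \frac{\overline C\,\tilde\eps_n^2}{n^2\,\eps_n^2}\,\gamma_{\zeta_n}(x_i)\,\eta_{\eps_n}(x_i-x_j) \]
uniformly in $(i,j)$ for $n\geq n_0$; summing over $(i,j)$ and dividing by $\tilde\eps_n^2$ then yields the lemma.

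Next I would handle the kernel: if $x\in U_i$, $y\in U_j$ and $\tilde\eta_{\tilde\eps_n}(x-y)\neq 0$, then $|x-y|\leq \tilde\eps_n$, and the triangle inequality with $|x-x_i|,|y-x_j|\leq \delta_n$ gives $|x_i-x_j|\leq \tilde\eps_n+2\delta_n = \eps_n$. The normalisation \eqref{eq:eta} ($\boldeta\geq 1$ on $[0,1]$) then forces $\eta_{\eps_n}(x_i-x_j)\geq \eps_n^{-d}$, so the trivial bound $\tilde\eta_{\tilde\eps_n}(x-y)\leq \tilde\eps_n^{-d}$ yields $\tilde\eta_{\tilde\eps_n}(x-y) \leq (\eps_n/\tilde\eps_n)^d\,\eta_{\eps_n}(x_i-x_j)$, and the ratio tends to $1$ since $\delta_n\to 0$.

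The harder step is the weight comparison $\gamma_{\tilde\zeta_n}(x)\leq C\,\gamma_{\zeta_n}(x_i)$ for $x\in U_i$, and I would split on $\dist(x,\Gamma)$. In the far regime $\dist(x,\Gamma)\geq 2\delta_n$, the $1$-Lipschitz bound on $\dist(\cdot,\Gamma)$ gives $\dist(x_i,\Gamma)\in[\tfrac12\dist(x,\Gamma),\,2\dist(x,\Gamma)]$, so \eqref{eq:g} yields $\gamma(x)\leq 2^\alpha\gamma(x_i)$; since $\zeta_n\geq \tilde\zeta_n$ implies $\gamma_{\zeta_n}\geq \gamma_{\tilde\zeta_n}$ pointwise, the comparison follows. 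In the near regime $\dist(x,\Gamma)<2\delta_n$, one has $\gamma_{\tilde\zeta_n}(x)\leq \tilde\zeta_n$, while $\dist(x_i,\Gamma)\leq 3\delta_n\leq 3C\ell_n$ using $\delta_n\leq C\ell_n$ from Theorem \ref{thm:TransBound}; the hypothesis $(r_0/2C)^\alpha\ell_n^{-\alpha}\geq \tilde\zeta_n$ (absorbing a fixed factor into $\overline C$ to account for the $3/2$) then forces $\gamma(x_i)\geq (r_0/\dist(x_i,\Gamma))^\alpha\gtrsim \tilde\zeta_n$, so $\gamma_{\zeta_n}(x_i)\geq \min\{\gamma(x_i),\zeta_n\}\gtrsim \tilde\zeta_n \geq \gamma_{\tilde\zeta_n}(x)$.

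Combining the two comparisons with $\mu(U_i)\mu(U_j)=n^{-2}$ delivers the per-cell bound, and the lemma follows. The main obstacle is exactly this near-regime weight comparison: a priori the singular weight $\gamma_{\tilde\zeta_n}$ could concentrate integrand mass on cells whose representative $x_i$ is slightly farther from $\Gamma$ than $x$, and then $\gamma_{\zeta_n}(x_i)$ need not be large enough to compensate. The scaling hypothesis linking $\tilde\zeta_n$ to $\ell_n$ is calibrated precisely so that the truncation plateau of $\gamma_{\tilde\zeta_n}$ extends at least out to the transport displacement scale, which is what closes the estimate.
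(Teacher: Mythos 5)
Your proof is correct and follows essentially the same route as the paper's: the cell decomposition $U_i=T_n^{-1}(\{x_i\})$ is just an explicit form of the pushforward identity the paper uses, the kernel comparison via the triangle inequality and $\boldeta\geq 1$ on $[0,1]$ is identical, and your two-regime weight comparison (splitting at $2\delta_n$ rather than at the truncation radius $r_{\tilde\zeta_n}$, with the hypothesis $\tilde\zeta_n\lesssim \ell_n^{-\alpha}$ closing the near regime) is the same argument as the paper's estimate \eqref{temp22} up to the choice of threshold. The constants you track ($(\eps_n/\tilde\eps_n)^{d+2}$ and a fixed power of $2$ or $3$ from the distance comparison) match the paper's.
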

\begin{proof}
If  $\left| \frac{x-z}{\tilde{\eps}_n}\right|  < 1$ then
\[ |T_n(x) - T_n(z)| \leq 2 \| T_n - \Id \|_{L^\infty(\Omega)} + | x-z| \leq 2 \| T_n - \Id \|_{L^\infty(\Omega)} +  \tilde{\eps}_n = \eps_n. \]
So,
\[ \left|  \frac{x-z}{\tilde{\eps}_n} \right|  < 1 \quad \te{ implies }\quad \left|  \frac{T_n(x) - T_n(z)}{\eps_n} \right|  \leq 1 \]
and therefore
\[ \left|  \frac{x-z}{\tilde{\eps}_n} \right|  < 1 \quad \te{ implies } \quad \tilde{\eta}\left( \frac{|x-z|}{\tilde{\eps}_n}\right) = 1 = \tilde{\eta}\left( \frac{|T_n(x)-T_n(z)|}{\eps_n}\right). \]
Hence,
\[ \tilde{\eta}\left( \frac{|x-z|}{\tilde{\eps}_n} \right) \leq \tilde{\eta}\left( \frac{|T_n(x) - T_n(z)|}{\eps_n} \right) \leq \eta\left( \frac{|T_n(x) - T_n(z)|}{\eps_n} \right). \]
From the assumptions on $\tilde \zeta_n$ and $T_n$ follows that 
\[  2 \| T_n - \Id \|_{L^\infty(\Omega)} \leq  r_0 (\tilde \zeta_n -1)^{-\frac{1}{\alpha}} = r_{\tilde \zeta_n} \]
where $r_{\tilde \zeta_n}$ is the length scale such that $1 + \left( \frac{r_0}{\dist(x,\Gamma)} \right)^{\alpha} > \tilde \zeta_n$ if $\dist(x,\Gamma) < r_{\tilde \zeta_n}$.
We claim that for a.e. $x \in \Omega$
\begin{equation} \label{temp22}
 \min \left\{1 + \left( \frac{r_0}{\dist(x,\Gamma)} \right)^{\alpha}, \tilde \zeta_n \right\}  \leq {2^\alpha} \min \left\{1 + \left( \frac{r_0}{\dist(T_n(x),\Gamma)} \right)^{\alpha}, \tilde \zeta_n \right\}  
 \end{equation}
Namely if $ d(x, \Gamma) \leq r_{\tilde \zeta_n}$ then $ d(T_n, \Gamma) \leq |T_n(x) - x| + r_{\tilde \zeta_n} \leq 2 r_{\tilde \zeta_n}$ for a.e. such $x$. Thus 
\[ 1 + \left( \frac{r_0}{\dist(T_n(x),\Gamma)} \right)^{\alpha} \geq 1 + \frac{1}{2^\alpha} \left( \frac{r_0}{r_{\tilde \zeta_n}} \right)^{\alpha}  \geq \frac{1}{2^\alpha} \tilde \zeta_n \]
If $ d(x, \Gamma) > r_{\tilde \zeta_n}$ then $d(x, \Gamma) \geq \frac12 d(T_n(x), \Gamma)$ for a.e. such $x$. Thus $\left( \frac{r_0}{\dist(T_n(x),\Gamma)} \right)^{\alpha} \geq \frac{1}{2^\alpha} 
\left( \frac{r_0}{\dist(x,\Gamma)} \right)^{\alpha}$. 
\medskip

Using \eqref{temp22} we conclude
\begin{align*}
& \E_{\tilde \eps_n,\tilde \zeta_n}(u_n\circ T_n;\tilde{\eta})  \\
& = \frac{1}{{\tilde \eps_n}^{2}} \iint  
\min \left\{1 + \left( \frac{r_0}{\dist(x,\Gamma)} \right)^{\alpha}, \tilde \zeta_n \right\} \tilde\eta_{\tilde \eps_n}(x-y)|u_n(T_n(x))-u_n(T_n(y))|^2 d \mu(x) d \mu(y) \\
& \leq 2^\alpha
 \frac{\eps_n^d}{\tilde{\eps}_n^{d+2}} \iint  \min \left\{1 + \left( \frac{r_0}{\dist(T_n(x),\Gamma)} \right)^{\alpha},  \zeta_n \right\}  \\
 & \phantom{   \leq 2^\alpha \frac{\eps_n^d}{\tilde{\eps}_n^{d+2}} \iint  \;} 
     \eta_{\eps_n}(|T_n(x)-T_n(y)|) \left|  u_n(T_n(x)) - u_n(T_n(z)) \right| ^2d \mu(x) d \mu(y) \\ 
& = 2^\alpha \frac{\eps_n^{d+2}}{\tilde{\eps}_n^{d+2}}\GE_{n,\eps_n,\zeta_n}(u_n; \eta).
\end{align*}
\end{proof}

In the next lemma we show that boundedness of non-local energies implies regularity at scales greater than $\eps$ with weight $\gamma_{\tilde \zeta}$.
This allows us to relate non-local bounds to local bounds after mollification using a mollifier $J\in C^\infty_c(\R^d,[0, \infty))$, with $\int_{\R^d}J(x)\, d x = 1$, and $J_\eps(x)=\frac{1}{\eps^d} J(x/\eps)$.

\begin{lemma} [nonlocal to weak local control] \label{lem:nonlocal-wloc}
There exists a constant $C\geq 1$ and a radially symmetric mollifier $J$ with $\supp(J)\subseteq \overline{B(0,1)}$ such that for all $\eps>0$, $u\in L^2(\Omega)$, and any
$\Omega^\prime\subset\subset \Omega$ (i.e. for every $\Omega^\prime$ that is compactly contained in $\Omega$) with $\dist(\Omega^\prime,\partial \Omega)> \eps$ it holds that\begin{equation} \label{eq:nonlocal-wloc}
 \E(u*J_\eps; \gamma_{\tilde \zeta}, \Omega^\prime) \leq C \E_{\eps, \tilde \zeta}(u;\Omega) 
\end{equation}
 where for both functionals we explicitly denote the dependence of the domain.
\end{lemma}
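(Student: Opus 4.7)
The plan is to choose a radially symmetric mollifier $J \in C_c^\infty(\R^d,[0,\infty))$ with $\supp(J)\subseteq \overline{B(0,1)}$ and $\int J = 1$, and then bound $|\nabla(u*J_\eps)(x)|^2$ pointwise by the inner integrand of the nonlocal energy, using the fact that $\eta \geq 1$ on $[0,1]$ to dominate the mollifier's gradient.

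The starting observation is that, because $\int J_\eps(x-y)\, dy = 1$ for each $x$, differentiation in $x$ gives
\[
\nabla(u*J_\eps)(x) \;=\; \int_{\R^d} u(y)\,\nabla_x J_\eps(x-y)\, dy \;=\; -\int_{\R^d}(u(x)-u(y))\,\nabla_x J_\eps(x-y)\, dy.
\]
Provided $x\in\Omega'$ and $\dist(\Omega',\partial\Omega)>\eps$, the integration over $y$ is effectively over $B(x,\eps)\subset\Omega$, so only values of $u$ inside $\Omega$ are used. Next I would apply Cauchy--Schwarz with the weight $\eta_\eps(x-y)$, splitting $\nabla_x J_\eps = (\eta_\eps)^{1/2}\cdot(\nabla_x J_\eps/(\eta_\eps)^{1/2})$, to obtain
\[
|\nabla(u*J_\eps)(x)|^2 \;\leq\; \Bigl(\int \eta_\eps(x-y)\,|u(x)-u(y)|^2\, dy\Bigr)\cdot\Bigl(\int \frac{|\nabla_x J_\eps(x-y)|^2}{\eta_\eps(x-y)}\, dy\Bigr).
\]

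The second factor is the key scaling computation. Using $\nabla J_\eps(z) = \eps^{-d-1}(\nabla J)(z/\eps)$, the assumption $\boldeta(t)\geq 1$ on $[0,1]$, and $\supp(\nabla J) \subseteq \overline{B(0,1)}$, on the support of $\nabla J_\eps(x-\cdot)$ one has $\eta_\eps(x-y)\geq \eps^{-d}$, and hence
\[
\int \frac{|\nabla_x J_\eps(x-y)|^2}{\eta_\eps(x-y)}\, dy \;\leq\; \|\nabla J\|_\infty^2\,\eps^{-2d-2}\cdot\eps^d\cdot|B(0,\eps)| \;\leq\; C\eps^{-2}.
\]
Multiplying by $\gamma_{\tilde\zeta}(x)\rho(x)^2$, integrating over $\Omega'$, and using $\rho(x)^2 \leq (\sup \rho / \inf \rho)\,\rho(x)\rho(y)$ on the right-hand side (valid since $\rho$ is continuous and bounded above and below by positive constants) converts $dx\, dy$ into $d\mu(x)\,d\mu(y)$ up to a constant, giving
\[
\int_{\Omega'}\gamma_{\tilde\zeta}(x)\rho(x)^2|\nabla(u*J_\eps)(x)|^2\, dx \;\leq\; \frac{C}{\eps^2}\iint_{\Omega\times\Omega}\gamma_{\tilde\zeta}(x)\eta_\eps(x-y)|u(x)-u(y)|^2\, d\mu(x)\,d\mu(y),
\]
which is exactly $\eqref{eq:nonlocal-wloc}$ after absorbing constants.

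I do not expect a substantive obstacle here: the argument is a standard nonlocal-to-local comparison and the structural asymmetry of the weight (only $\gamma_{\tilde\zeta}(x)$, not $\gamma_{\tilde\zeta}(y)$) is harmless because the same factor $\gamma_{\tilde\zeta}(x)$ multiplies the local integrand. The only mild care is in choosing $J$ so that its gradient is controlled by $\eta_\eps^{1/2}/\eps$; the lower bound $\boldeta\geq 1$ on $[0,1]$ combined with $\supp J\subseteq\overline{B(0,1)}$ makes this automatic, which is presumably why the lemma asks only for the existence of such a $J$ (any fixed standard bump suffices).
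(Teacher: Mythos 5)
Your proposal is correct and follows essentially the same route as the paper's proof: use $\int_{\R^d}\nabla J_\eps=0$ to rewrite $\nabla(u*J_\eps)(x)$ in terms of the differences $u(x)-u(y)$, exploit $\dist(\Omega',\partial\Omega)>\eps$ so only values of $u$ in $\Omega$ enter, dominate $|\nabla J|$ using $\boldeta\geq 1$ on $[0,1]$, and convert $dx\,dy$ to $d\mu\,d\mu$ via the upper and lower bounds on $\rho$. The only cosmetic difference is that you apply weighted Cauchy--Schwarz where the paper bounds $|\nabla J|\leq\beta\eta$ and then uses Jensen's inequality with the probability measure $\eta_\eps/\theta_\eta$; the two give the same estimate up to constants.
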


\begin{proof}
Let $J$ be a radially symmetric mollifier whose support is contained in $\overline{B(0,1)}$.
There exists $\beta>0$ such that $J \leq \beta \eta(|\cdot|)$ and $|\nabla J| \leq \beta \eta(|\cdot|)$.
Let $u_\eps = J_\eps * u$.
For arbitrary  $x\in \Omega$ with $\dist(x,\partial \Omega)> {\eps}$ we have
\begin{align*}
|\nabla {u_\eps}(x) |  & = \left|\int_\Omega \nabla J_{{\eps}} \left(x-z\right)u(z) \, d z \right|\\
& =  \left| \int_\Omega \nabla J_{{\eps}}\left(x-z \right)\left(u(z) - u(x) \right)\, d z - \int_{\R^d\setminus \Omega} \nabla J_{{\eps}} \left(x-z \right)u(x) \, d z \right|\\
& \leq \frac{\beta}{{\eps}^{d+1}} \int_\Omega \eta\left(\frac{|x-z|}{{\eps}} \right)\left|u(z) - u(x) \right|\, d z + \frac{1}{{\eps^{d+1}}} \int_{\R^d\setminus \Omega} \left|\nabla J \left(\frac{x-z}{\eps}\right)\right|\left|u(x) \right|\, d z.
\end{align*}
where the second line follows from $ \int_{\R^d} \nabla J(w) \, d w = 0$.
For the second term we have
\begin{align*}
\frac{1}{{\eps}^{d+1}} \int_{\R^d\setminus \Omega} \left| \nabla J \left( \frac{x-z}{{\eps}}\right)\right| |u(x)| \, d z & =0 
\end{align*}
since for all $z\in\R^d\setminus\Omega$ and $x\in\Omega$ with $\dist(x,\partial \Omega)> {\eps}$ it follows that $|x-z| > \eps$ and thus $\nabla J \left( \frac{x-z}{{\eps}}\right)=0$.
Therefore, for $\theta_\eta=\int_{\R^d} \eta(|w|) \, d w$,
\begin{align*}
|\nabla {u_\eps}(x) |^2 & \leq \beta^2 \left(\int_\Omega \frac{1}{{\eps}} {\eta}_{{\eps}}(|x-z|) \left| u(z) - u(x) \right|\, d z \right)^2 \\
 & = \frac{\theta_\eta^2 \beta^2}{\eps^2} \left(\int_\Omega \frac{\eta_\eps(|x-z|)}{\theta_\eta} |u(z)-u(x)| \, d z\right)^2 \\
 & \leq \theta_\eta \beta^2 \int_\Omega {\eta}_{{\eps}}(|x-z|) \frac{\left|u(z) - u(x) \right|^2}{{\eps}^2} \, d z 
\end{align*}
by Jensen's inequality (since $\frac{1}{\theta_\eta}\int_{\R^d} \eta_\eps(|x-z|)\, d z = 1$).
Hence,
\begin{align*}
\int_{\Omega^\prime} \left|\nabla {u_\eps}(x) \right|^2 \gamma_{\tilde \zeta} (x) \rho^2(x) \, d x & \leq \theta_\eta \beta^2 \int_\Omega \int_\Omega {\eta}_{{\eps}}(|x-z|) \left|\frac{u(z) - u(x)}{\eps} \right|^2 \gamma_{\tilde \zeta} (x)  \rho^2(x) \, d z \, d x \\
 & \leq \frac{\theta_\eta \beta^2 \sup_{x\in \Omega} \rho(x)}{\inf_{x\in \Omega} \rho(x)} \,\E_{\eps, \tilde \zeta}(u;\Omega) 
\end{align*}
which completes the proof.
\end{proof}

We now show that controlling the local energy with cut-off near the singularity is sufficient to be able to find a nearby (in $H^1_\gamma$) function which has a similarly bounded energy without a cut-off.

\begin{lemma}[weak local to strong local control] \label{lem:wloca-loc}
Consider $\tilde \zeta>1$ such that  $r_{\tilde \zeta}$ defined in \eqref{eq_rzeta} satisfies $r_{\tilde \zeta} \leq  \frac12 \min \{ |x-y| \: :\: x,y \in \Gamma, x \neq y \}$. Let $ \bar r =  r_{\tilde \zeta}$. Then there exists a constant $C>0$ such that for every $u \in H^1(\Omega)$ there exists 
$v \in H^1_\gamma(\Omega)$ such that 
\begin{align} 
v|_{B(z,\bar r/2)} & \equiv \frac{1}{|B(0, \bar r/2)|} \int_{B(z, \bar r /2)} u(x) dx \quad \te{ for all } z \in \Gamma, \label{tline1} \\
v & = u \quad \te{ on } \Omega \setminus \Gamma_{\bar r}   \label{tline2} \\%
\E(v; \gamma) & \leq C \E(u, \gamma_{\tilde \zeta}).  \label{eq:wloc-loc}
\end{align}
\end{lemma}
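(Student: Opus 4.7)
The plan is to define $v$ by freezing $u$ at its mean value on each ball $B(z,\bar r/2)$, $z\in\Gamma$, and interpolating smoothly back to $u$ on the surrounding annulus using a standard cutoff. Because $\bar r = r_{\tilde\zeta} \leq \tfrac12\min\{|x-y|:x,y\in\Gamma,\,x\neq y\}$, the balls $\{B(z,\bar r)\}_{z\in\Gamma}$ are pairwise disjoint, so the construction may be carried out one label at a time.

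Concretely, for each $z\in\Gamma$ choose $\phi_z\in C^\infty_c(B(z,\bar r))$ with $\phi_z\equiv 1$ on $B(z,\bar r/2)$, $0\leq\phi_z\leq 1$, and $|\nabla\phi_z|\leq C/\bar r$. Let $c_z := \fint_{B(z,\bar r/2)} u\,dx$, and set
\[
v(x) = \begin{cases}
\phi_z(x)\,c_z + (1-\phi_z(x))\,u(x), & x\in B(z,\bar r),\ z\in\Gamma,\\[2pt]
u(x), & x\in\Omega\setminus\bigcup_{z\in\Gamma}B(z,\bar r).
\end{cases}
\]
Properties \eqref{tline1} and \eqref{tline2} are then immediate (with $\Gamma_{\bar r}:=\bigcup_{z\in\Gamma}B(z,\bar r)$), and outside $\Gamma_{\bar r}$ the energies $\E(v;\gamma)$ and $\E(u;\gamma_{\tilde\zeta})$ have identical integrands since $\gamma_{\tilde\zeta}=\gamma$ there.

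The remaining contribution is the integral over each annulus $A_z := B(z,\bar r)\setminus B(z,\bar r/2)$, where $\nabla v = \nabla\phi_z\,(c_z-u)+(1-\phi_z)\nabla u$, so $|\nabla v|^2 \leq 2|\nabla\phi_z|^2(c_z-u)^2 + 2|\nabla u|^2$. The key geometric observation is that for $x\in B(z,\bar r)$ one has $\dist(x,\Gamma)=|x-z|$, and hence $\gamma_{\tilde\zeta}\equiv\tilde\zeta$ on $B(z,\bar r)$, while on $A_z$ we have $\dist(x,\Gamma)\geq\bar r/2$ so that (using \eqref{eq:g}, with the bounded-range case $\bar r>R/4$ absorbed into the constant) $\gamma(x)\leq 1+2^\alpha(r_0/\bar r)^\alpha \leq C\tilde\zeta$. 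Thus on $A_z$, $\gamma\leq C\gamma_{\tilde\zeta}$, so $\int_{A_z}\gamma\,(1-\phi_z)^2|\nabla u|^2\,\rho^2\,dx \leq C\int_{A_z}\gamma_{\tilde\zeta}|\nabla u|^2\rho^2\,dx$, which is controlled by $\E(u;\gamma_{\tilde\zeta})$ since $\rho$ is bounded above and below.

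For the gradient-of-cutoff term we bound
\[
\int_{A_z}\gamma\,|\nabla\phi_z|^2(c_z-u)^2\,dx \leq \frac{C\tilde\zeta}{\bar r^{2}}\int_{A_z}(c_z-u)^2\,dx.
\]
Writing $\bar u := \fint_{B(z,\bar r)}u\,dx$, the Poincar\'e inequality on $B(z,\bar r)$ gives $\int_{B(z,\bar r)}(u-\bar u)^2\,dx\leq C\bar r^{2}\int_{B(z,\bar r)}|\nabla u|^2\,dx$, and Jensen yields $(c_z-\bar u)^2 \leq C\bar r^{\,2-d}\int_{B(z,\bar r)}|\nabla u|^2\,dx$; combining via $(c_z-u)^2 \leq 2(c_z-\bar u)^2+2(u-\bar u)^2$ and integrating over $A_z$ (whose volume is $\lesssim\bar r^{d}$) produces $\int_{A_z}(c_z-u)^2\,dx \leq C\bar r^{2}\int_{B(z,\bar r)}|\nabla u|^2\,dx$. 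Plugging back in, and using that $\gamma_{\tilde\zeta}\equiv\tilde\zeta$ on $B(z,\bar r)$, we get
\[
\int_{A_z}\gamma\,|\nabla\phi_z|^2(c_z-u)^2\,dx \leq C\tilde\zeta\int_{B(z,\bar r)}|\nabla u|^2\,dx \leq C\int_{B(z,\bar r)}\gamma_{\tilde\zeta}|\nabla u|^2\,dx.
\]
Summing over $z\in\Gamma$ and using the bounds on $\rho$, this yields \eqref{eq:wloc-loc}. The only subtle point is calibrating the competition between the blow-up of $\gamma$ and the Poincar\'e gain of $\bar r^2$ on a ball of radius comparable to $\bar r$; the choice of the averaging ball $B(z,\bar r/2)$ is precisely what matches these scales, and it is also why the estimate degenerates in the regime $\alpha\leq d-2$ (covered separately in the negative-result Proposition~\ref{prop:Gamma_subcrit}).
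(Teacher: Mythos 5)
Your proof is correct and follows essentially the same route as the paper: the paper delegates the cutoff-interpolation construction ($v=\phi_z c_z+(1-\phi_z)u$ plus the Poincar\'e inequality on $B(z,\bar r)$) to an auxiliary Lemma \ref{lem:lewl} in the appendix and then runs the same weight comparison $\gamma\leq 2^\alpha\tilde\zeta=2^\alpha\gamma_{\tilde\zeta}$ on the annulus together with $\nabla v=0$ on the inner half-ball. You have simply inlined that auxiliary lemma; the scaling bookkeeping ($|\nabla\phi_z|\lesssim\bar r^{-1}$ against the Poincar\'e gain $\bar r^2$) matches the paper's rescaling argument exactly.
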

\begin{proof}
Using the finiteness of $\Gamma$,
from Lemma \ref{lem:lewl} via translations and a rescaling follows that there exists $ c\geq 1$, and $v \in H^1(\Omega)$ satisfying \eqref{tline1} and \eqref{tline2} 
 such that 
\[ \int_{\Gamma_{\bar r}} |\nabla v(x)|^2 dx \leq c  \int_{\Gamma_{\bar r}} |\nabla u(x)|^2 dx. \]
Using that $\nabla v = 0$ on $\Gamma_{\frac{\bar r}{2}}$ and $1 + \left( \frac{ r_0}{\bar r}\right)^\alpha  = \tilde \zeta$  we obtain
\begin{align*}
\E(v; \gamma) & =  \int_{\Gamma_{\bar r} }\gamma(x) |\nabla v(x)|^2 \rho(x) dx  +  \int_{\Omega \setminus \Gamma_{\bar r} } \gamma(x) |\nabla u(x)|^2 \rho(x) dx \\
& \leq  \int_{\Gamma_{\bar r} \setminus \Gamma_{\frac{\bar r}{2}}  } \left( 1 + \left( \frac{2 r_0}{\bar r}\right)^\alpha  \right) \, |\nabla v(x)|^2 \rho(x) dx  +
 \int_{\Omega \setminus \Gamma_{\bar r} } \gamma_{\tilde \zeta}  |\nabla u(x)|^2 \rho(x) dx  \\
 & \leq 2^\alpha \int_{\Gamma_{\bar r} \setminus \Gamma_{\frac{\bar r}{2}}  }  \tilde \zeta \,  |\nabla v(x)|^2 \rho(x) dx  
 +  \int_{\Omega \setminus \Gamma_{\bar r} } \gamma_{\tilde \zeta}  |\nabla u(x)|^2 \rho(x) dx  \\
 & \leq 2^\alpha c \int_{\Gamma_{\bar r}  }  \tilde \zeta \,  |\nabla u(x)|^2 \rho(x) dx   +
    \int_{\Omega \setminus \Gamma_{\bar r} } \gamma_{\tilde \zeta}  |\nabla u(x)|^2 \rho(x) dx   \\
 & \leq 2^\alpha c \int_{\Omega } \gamma_{\tilde \zeta}     |\nabla u(x)|^2 \rho(x) dx 
\end{align*}
\end{proof}

\red
\begin{lemma} \label{lem:avgest}
There exists $C>0$ such that for all $0< \eps <r  \leq 1$, for all $u \in H^1(B(0,r))$ such that $u \leq 0$ on $B(0, \eps)$
\begin{equation}\label{eq:avgest}
 \dashint_{B(0,r)} u(x) dx \leq C \sqrt{\int_{B(0,r) \setminus B(0, \eps)} |\nabla u|^2 dx } \:
 \begin{cases}
 \eps^{\frac{2-d}{2}} \quad & \te{if } d \geq 3 \\
 \sqrt{-\ln \eps} & \te{if } d=2.
 \end{cases} 
\end{equation}
\end{lemma}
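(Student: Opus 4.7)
The plan is to reduce to a one-dimensional computation via spherical averages. Let
\[ f(\rho) = \dashint_{\partial B(0,\rho)} u \, dS \quad \text{for a.e. } \rho \in (0,r). \]
Since $u \leq 0$ a.e.\ on $B(0,\eps)$, Fubini in polar coordinates gives $f(\rho) \leq 0$ for a.e.\ $\rho \in (0,\eps)$. Standard ACL characterization of $H^1$ functions (applied after writing $u$ in spherical coordinates) yields that $f$ is absolutely continuous on $(0,r)$ away from a null set, with $f'(\rho)$ equal to the average of the radial derivative on $\partial B(0,\rho)$. Thus Cauchy--Schwarz on the sphere gives
\[ |f'(\rho)|^2 \leq \frac{1}{|\partial B(0,\rho)|} \int_{\partial B(0,\rho)} |\nabla u|^2 \, dS, \]
so that
\[ \int_\eps^r \rho^{d-1} |f'(\rho)|^2 \, d\rho \leq \frac{1}{c_d} \int_{B(0,r)\setminus B(0,\eps)} |\nabla u|^2 \, dx =: \frac{E}{c_d}. \]

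Next, pick $\eps' < \eps$ with $f(\eps') \leq 0$ (which exists since $f \leq 0$ a.e.\ on $(0,\eps)$). For every $s \in [\eps', r]$, the fundamental theorem of calculus and Cauchy--Schwarz yield
\[ |f(s) - f(\eps')|^2 \leq \Bigl(\int_{\eps'}^s \rho^{1-d} \, d\rho\Bigr) \Bigl(\int_{\eps'}^s \rho^{d-1} |f'(\rho)|^2 \, d\rho\Bigr) \leq I(\eps',r)^2 \cdot \frac{E}{c_d}, \]
where $I(\eps',r)^2 = \int_{\eps'}^r \rho^{1-d} \, d\rho$ evaluates to at most $\tfrac{(\eps')^{2-d}}{d-2}$ when $d \geq 3$ and to $-\ln \eps'$ when $d=2$ (using $r \leq 1$). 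Combined with $f(\eps') \leq 0$, this gives the pointwise bound
\[ f(s) \leq |f(s) - f(\eps')| \leq C\sqrt{E} \cdot \kappa(\eps'), \qquad s \in [\eps', r], \]
with $\kappa$ being the desired factor $(\eps')^{(2-d)/2}$ or $\sqrt{-\ln \eps'}$.

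To conclude, write the target average in polar coordinates and split:
\[ \dashint_{B(0,r)} u \, dx = \frac{d}{r^d} \int_0^{\eps'} s^{d-1} f(s) \, ds + \frac{d}{r^d} \int_{\eps'}^r s^{d-1} f(s) \, ds. \]
The first integral is non-positive, and the second is at most $C\sqrt{E} \cdot \kappa(\eps') \cdot \tfrac{d}{r^d}\int_{\eps'}^r s^{d-1} ds \leq C\sqrt{E} \cdot \kappa(\eps')$. Letting $\eps' \nearrow \eps$ along a sequence where the estimate holds (the bound $\kappa$ is continuous in $\eps'$) gives the claim. The only even mildly delicate point is justifying the ACL-style FTC for $f$ — standard, and not a real obstacle; everything else is Cauchy--Schwarz and the explicit evaluation of the radial integral that produces the dimensional dichotomy.
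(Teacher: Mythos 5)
Your proof is correct, but it takes a genuinely different route from the paper. The paper's argument is a Neumann Green's function trick: after reducing to $u=0$ on $B(0,\eps)$ via $\tilde u=\max\{u,0\}$, it writes $\int_{B(0,1)}u\,dx=\int_{B(0,1)\setminus B(0,\eps)}u\,\Delta(v+\Phi)\,dx$ with $v(x)=\tfrac{1}{2d}|x|^2$ and $\Phi$ the fundamental solution (so that $\Delta(v+\Phi)=1$ and $\partial_\nu(v+\Phi)=0$ on $\partial B(0,1)$), integrates by parts once (both boundary terms vanish), and applies Cauchy--Schwarz to $\int|\nabla u|\,|x|^{1-d}$; the dichotomy comes from $\int_{B(0,1)\setminus B(0,\eps)}|x|^{2-2d}dx$. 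You instead reduce to one dimension via the spherical average $f(\rho)$ and run FTC plus Cauchy--Schwarz in the radial variable, with the same integral $\int\rho^{1-d}d\rho$ producing the dichotomy. Both are sound; the paper's version avoids the ACL justification entirely at the cost of knowing the right test function, while yours is more elementary in spirit and exploits $u\le0$ directly (no truncation needed) by splitting the polar integral at $\eps'$. One small point to tidy: for fixed $\eps'<\eps$ your bound $\int_{\eps'}^{s}\rho^{d-1}|f'|^2\,d\rho\le E/c_d$ is not literally available, since the hypothesis controls the gradient only on $B(0,r)\setminus B(0,\eps)$; you should either take the base point to be $\eps$ itself (legitimate, since $f$ is continuous on $(0,r)$ and $\le0$ a.e.\ on $(0,\eps)$, hence $f(\eps)\le0$), or note that the extra energy $\int_{B(0,\eps)\setminus B(0,\eps')}|\nabla u|^2$ tends to $0$ as $\eps'\nearrow\eps$ so your limiting step absorbs it. Either fix is one line.
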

\begin{proof}
We only prove the claim for $r=1$. The claim for general $r$ follows by a simple change of variables $y = x/r$. Furthermore we note that we can assume that $u =0$ on $B(0, \eps)$, since for general $u$ one can consider $\tilde u = \max \{u,0\}$ and note that 
\[ \int_{B(0,1)} u dx \leq \int_{B(0,1)} \tilde u dx \quad \te{ and } \quad 
 \sqrt{\int_{B(0,1) \setminus B(0, \eps)} |\nabla \tilde u|^2 dx}\,  \leq \sqrt{\int_{B(0,1) \setminus B(0, \eps)} |\nabla  u|^2 dx} \,.
\]
Let $v(x)=\frac{1}{2d}|x|^2$ and 
\[\Phi(x) =
\begin{cases}
-\frac{1}{2}\log|x|,&\text{if }d=2\\
\frac{1}{d(d-2)|x|^{d-2}},&\text{if }d\geq 3.
\end{cases}\]
Then $\Delta(v+\Phi) = 1$ for $x\neq 0$ and $\frac{\partial v}{\partial \nu}+\frac{\partial \Phi}{\partial \nu} =0$ on $\partial B(0,1)$. Since $u=0$ on $B(0,\eps)$ we have
\begin{align*}
\int_{B(0,1)}u\, dx&=\int_{B(0,1)\setminus B(0,\eps)}u\Delta(v+\Phi)\, dx\\
&=-\int_{B(0,1)\setminus B(0,\eps)}\nabla u\cdot (\nabla v + \nabla \Phi)\, dx\\
&=\frac{1}{d}\int_{B(0,1)\setminus B(0,\eps)}\nabla u \cdot x\left( \frac{1}{|x|^d}-1 \right)\, dx\\
&\leq \frac{2}{d}\int_{B(0,1)\setminus B(0,\eps)} |\nabla u||x|^{1-d}\, dx\\
&\leq \frac{2}{d}\sqrt{\int_{B(0,1)\setminus B(0,\eps)} |\nabla u|^2\, dx}\sqrt{\int_{B(0,1)\setminus B(0,\eps)} |x|^{2-2d}\, dx}.
\end{align*}
The proof is completed by integrating the last term on the line above.
\end{proof}
\nc

\subsection{Proof of Proposition \ref{prop:Gamma}. }
\label{sec:propproof}

\begin{proof}
To show the $\limsup$ inequality recall that $\mathcal S$, the set of smooth functions which are constant in some neighborhood of $\Gamma$, 
 is dense in $H^1_\gamma(\Omega)$, by Corollary \ref{cor:dense}. 
The fact that for every $f \in \mathcal S$, $\GE_{n, \eps_n, \zeta_n}(f) \to \theta_\eta \E(f)$ follows by a standard argument, which was for example presented for total variation in Section~5 of~\cite{GTS16}. The existence of a recovery sequence for arbitrary $f \in H^1_\gamma(\Omega)$ follows by a density argument.

To show the $\liminf$ inequality consider a sequence $(\mu_n, u_n)$ converging in $TL^2$ to $(\mu,u)$. We can assume without a loss of generality that $u_n|_\Gamma = g$ and that $\liminf_{n \to \infty}     \GE_{n,\eps_n, \zeta_n}(u_n)$ is finite. 
Since $\eps_n \gg d_\infty(\mu, \mu_n) \gtrsim n^{-1/d}$ it follows that $\zeta_n \to \infty$ as $n \to \infty$. 
By Lemma \ref{lem:Gamma1}, discrete energy $\GE_{n,\eps_n, \zeta_n}$ $\Gamma$-converges to $\E$. Thus $u \in H^1_\gamma(\Omega)$ and 
\[ \liminf_{n \to \infty}     \GE_{n,\eps_n, \zeta_n}(u_n) \geq \theta_\eta \E(u). \]

What remains to be shown is that $u|_\Gamma = g$. The fact that $u|_\Gamma$ is a well defined object follows from Lemma \ref{lem:lebesgue}.
Let us assume that $\liminf_{n \to \infty}     \GE_{n,\eps_n, \zeta_n}(u_n)  = \lim_{n \to \infty}     \GE_{n,\eps_n, \zeta_n}(u_n)$. For the general case one needs to consider a subsequence, which we omit for notational simplicity. 
We have that $E_{max} = \sup_n  \GE_{n,\eps_n, \zeta_n}(u_n)  < \infty.$

We first show that near points  $z \in \Gamma$, the values of $u_n$ remain, on average, close to $g(z)$. More precisely
\[ E_{max} \geq \GE_{n,\eps_n, \zeta_n}(u_n) \geq \frac{1}{2n^2\eps_n^{2}} \sum_{x \in \X_n} \zeta_n \eta_{\eps_n}(x-z) |u_n(x) - g(z)|^2. \]
and thus 
\[ \frac{1}{n} \sum_{x \in \X_n} \eta_{\eps_n}(x-z) |u_n(x) - g(z)|^2 \leq 2E_{max} \, \frac{n \eps_n^2}{\zeta_n}. \]
Since $\eta\geq 1$ on $B(0,1)$
\[ \frac{1}{n} \sum_{x \in \X_n, |x-z| < \eps_n }  |u_n(x) - g(z)|^2 \leq 2E_{max} \, \frac{n \eps_n^{d+2}}{\zeta_n}. \]
Let $T_n$ be a sequence of transport maps satisfying the conclusions of Theorem~\ref{thm:TransBound} and let $\tilde{\eps}_n = \eps_n-2\|T_n-\Id\|_{L^\infty(\Omega)}$.

Then for a.e. $x \in B(z, \tilde \eps_n)$, $T_n(x) \in B(z, \eps_n)$ and thus 
\[ \int_{B(z, \tilde \eps_n)} |u_n (T_n(x)) - g(z)|^2 \rho(x)  dx  \leq 2E_{max} \, \frac{n \eps_n^{d+2}}{\zeta_n}. \]
Therefore
\begin{equation} \label{eq:oscest}
 \dashint_{B(z, \tilde \eps_n)} |u_n (T_n(x)) - g(z)|^2 \rho(x)  dx  \lesssim \frac{n \eps_n^2}{\zeta_n} \ll 1
\end{equation}
by the assumption on $\zeta_n$. \red Consequently for all $y \in B\left( z, \frac{\tilde \eps_n}{2} \right)$
\begin{equation} \label{eq:oscest2}
 \dashint_{B(y, \, \tilde \eps_n/2)} |u_n (T_n(x)) - g(z)|^2  dx  \lesssim \frac{n \eps_n^2}{\zeta_n} \ll 1. 
\end{equation}
\nc

By Lemma \ref{lem:disc-nonlocal} we know that, for $\tilde \zeta_n = \min \left\{ \zeta_n , \left(\frac{r_0}{2C} \right)^\alpha \left(\frac{n}{\ln n}\right)^{\alpha/d}  \right\}$ where $C$ is the constant from Theorem~\ref{thm:TransBound}
\begin{equation} \label{eq:tempest1}
\E_{\tilde \eps_n, \tilde \zeta_n}(u_n \circ T_n;  \tilde \eta) \lesssim  \GE_{n,\eps_n,\zeta_n}(u_n; \eta)
\end{equation}
\red We note that since $\alpha > d-2$, and $\eps_n \gg \left(\frac{\ln n}{n}\right)^{1/d}, $ 
 $\; \tilde \zeta_n \gg \eps_n^{2-d}$ if $d >2$ and $\tilde \zeta_n \gg -\ln \eps_n$ 
Let $\hat \eps_n  =  \frac{\tilde \eps_n}{2} $. By definition of $\E_{\hat \eps_n, \tilde \zeta_n}$
\begin{equation} \label{eq:tempest1b}
\E_{\hat \eps_n, \tilde \zeta_n}(u_n \circ T_n;  \tilde \eta) \lesssim  \E_{\tilde \eps_n, \tilde \zeta_n}(u_n \circ T_n;  \tilde \eta) 
\end{equation}
\nc

Let $J$ be a mollifier used in the proof of Lemma \ref{lem:nonlocal-wloc} and let \red $\tilde u_n = (u_n \circ T_n) * J_{\hat \eps_n}$. \nc From \eqref{eq:oscest2} follows that \red for all
$y \in B\left( z, \hat \eps_n \right)$
\begin{equation} \label{eq:ungest}
 |\tilde u_n(y) - g(z)| \lesssim \frac{n \eps_n^2}{\zeta_n} 
\end{equation}
for all $z \in \Gamma$. \nc 
Combining the estimate of the lemma with \eqref{eq:tempest1} yields
\[  \E(\tilde u_n; \gamma_{\tilde \zeta_n}, \Omega^\prime_n)  \lesssim   \GE_{n,\eps_n,\zeta_n}(u_n; \eta) \]
\red where $\Omega_n' = \{ y \in \Omega \::\: d(y, \partial \Omega) > \eps_n \}$. \nc
%
%
Finally by Lemma \ref{lem:wloca-loc} there exist $v_n \in H^1_\gamma(\Omega^\prime_n)$ such that \red for all $z \in \Gamma,$
$\; v_n(z) = \dashint_{B\left( z, \bar r/2 \right)} \tilde u_n(y) dy$ \nc 
where $\bar r = r_{\tilde \zeta_n}$
 and
\begin{equation} \label{eq:vnene}
 \E(v_n, \Omega^\prime_n)  \lesssim   \GE_{n,\eps_n,\zeta_n}(u_n; \eta). 
 \end{equation}
\red From \eqref{eq:ungest} follows that for some $C$ independent of $n$, for all $z \in \Gamma$, and all $y \in B(z, \hat \eps_n)$, using that $\int_{B(0, \bar r/2)} |\nabla \tilde u_n|^2 dx \leq 
\frac{1}{\tilde \zeta_n} \E(\tilde u_n; \gamma_{\tilde \zeta_n}, \Omega^\prime_n) $, 
$u(y) -g(z) - C \frac{n \eps_n^2}{\zeta_n} \leq 0$. Thus by Lemma \ref{lem:avgest}
\[  \dashint_{B(0,\bar r/2)} \tilde u_n(x) dx \leq g(z) + C \frac{n \eps_n^2}{\zeta_n}  
+ C_1
 \tilde \zeta_n^{-1/2} \,
 \begin{cases}
 \eps^{\frac{2-d}{2}} \quad & \te{if } d \geq 3 \\
 \sqrt{-\ln \eps} & \te{if } d=2.
 \end{cases}  
 \]
for some $C_1$ independent of $n$. By the assumptions on $\zeta_n$ and definition of $\tilde \zeta_n$ the right hand side converges to zero an $n \to \infty$. Analogous lower bound is obtained following the same argument. 
Therefore $v_n(z) - g(z) = \dashint_{B(0,\bar r/2)} \tilde u_n(x) dx - g(z)$ converges to zero as $n \to \infty$.
 \nc

We note that by construction $d_{TL^2}((\mu_n, u_n), (\mu, \tilde u_n)) \to 0$ and thus $\tilde u_n \to u$ in $L^2(\Omega)$. By construction $\| v_n - \tilde u_n \|_{L^2(\Omega^\prime
_n)} \lesssim 
\| \tilde u_n \|_{L^2(\Gamma_{\bar r_n}) }$, where $\bar r_n = r_{\tilde \zeta_n}$ and for any $s >0$
\begin{equation} \label{eq:thickenedGamma}
\Gamma_{s} = \{ x \in \Omega \::\: \dist(x,\Gamma)< s\}.
\end{equation}
Since $(\mu_n, u_n)  \overset{TL^2}{\longrightarrow} (\mu,u)$ it follows that 
\[ \int_{T_n^{-1}(\Gamma_{\bar r_n})} |u_n(T_n(x))-u(x)|^2 d\mu(x) \to 0 \qquad \te{ as } n \to \infty. \]
Since $u \in L^2(\mu)$, $\lim_{\delta \to 0} \sup\left\{ \int_A u^2(x) dx \: : \: \mu(A)< \delta \right\} = 0$.
 Therefore 
\[ \| \tilde u_n \|_{L^2(\Gamma_{\bar r_n}) } \leq 2 \int_{T_n^{-1}(\Gamma_{\bar r_n})} |u_n(T_n(x))-u(x)|^2 + u^2(x) d\mu(x) \to 0 \qquad \te{as } n \to \infty. \]
Thus $\| v_n - \tilde u_n \|_{L^2(\Omega^\prime_n)} \to 0$ and $n \to \infty$ and consequently $v_n \to u$ in $L^2(\mu)$. From \eqref{eq:vnene} follows that $v_n$ is a bounded sequence in $H^1_\gamma(K)$ for any compact subset $K \subset \subset \Omega$. Combining this with the fact that $v_n \to u$ in $L^2(\mu)$ implies, via estimate \eqref{eq:traceest} of the Trace Theorem (Theorem \ref{thm:trace}), that $v_n(z) \to \Tr u(z) $ as $n \to \infty$ for all $z \in \Gamma$. 
\red  Since $v_n|_{\Gamma} \to  g$ as $n \to \infty$ \nc  we conclude that
 $\Tr u(z) = g(z)$ for all $z \in \Gamma$.
\end{proof}


\section{Regularity of minimizers of the graph properly-weighted Laplacian} \label{sec:reg}

\subsection{H\"older estimate near labeled points}
\label{sec:near}
Our main result in this section is a type of H\"older estimate near the labeled points, which shows that solutions of the graph-based learning problem \eqref{eq:optimality} attain their boundary values on $\Gamma$ continuously, with high probability. The proof is a graph-based version of the barrier argument from Theorem \ref{thm:wellposedness} that established continuity at labels in the continuum PDE, given in Eq.~\eqref{eq:holderpde}.  Barrier arguments for proving H\"older regularity of solutions of PDEs are standard techniques for first order equations, such as Hamilton-Jacobi equations \cite{bardi2008optimal}. Normally, barrier arguments do not work for second order elliptic equations (since fundamental solutions are unbounded), though there are a handful of exceptions, such as the $p$-Laplace equation for $p>d$ \cite{calder2017game}, level set equations for affine curvature motion \cite{calder2018limit}, and our continuum equation \eqref{eq:pde}. 

\red
Our proof uses the barrier $v(x) = C|x-y|^{\beta}$, which is a supersolution of the continuum PDE \eqref{eq:pde} for $0 < \beta < \alpha + 2-d$, due to Proposition \ref{prop:barrier}. For $\beta < 2$, the barrier has a singularity at $x=y$, which has to be treated carefully in the translation to the graph setting. We show in Lemma \ref{lem:barrier} that $v$ is a supersolution on the graph with high probability away from a small ball $B(y,C\eps)$. Due to the singularity in the barrier, we cannot prove the supersolution property within the ball $B(y,C\eps)$. To fill in the gap within this ball, we require a local regularity result, given in Lemma \ref{prop:micro_holder}, that relies on the variational structure of the problem. At a high level, the proof is similar to the proof of H\"older regularity of solutions to the graph-based game theoretic $p$-Laplace equation, given in \cite{calder2017game}, though many of the ingredients are different. In particular, in \cite{calder2017game} there is no variational interpretation of the problem, and the local argument utilizes another barrier construction.  

We now proceed to present the main results in this section. Throughout we always assume $n\eps^d \geq 1$.  \nc Our main result is the following H\"older-type estimate.
\begin{theorem}[H\"older estimate]\label{thm:holder}
\red Let $\alpha>d-2$, and assume $\gamma$ satisfies \eqref{eq:g} and $\rho\in C^{1,\sigma}(\bar{\Omega})$ is bounded above and below by positive constants. Let $0< \eps \leq 1$, $\zeta\geq 1 +\eps^{-\alpha}$, $0 < \beta < \alpha + 2-d$, and let $u\in L^2(\X_n)$ be the solution of \eqref{eq:optimality}. Then \nc for each $z\in \Gamma$ the event that
\begin{equation}\label{eq:holder}
|u(x_i) - u(z)| \leq C|x_i-z|^\beta+Cn^{1/2}\eps^{1 + \alpha/2}
\end{equation}
holds for all $x_i\in \X_n$ occurs with probability at least $1 -C\exp\left(  -cn\eps^{d +4} + \log(n) \right)$.
\end{theorem}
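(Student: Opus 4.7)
The plan is to prove \eqref{eq:holder} by a discrete barrier argument, in close analogy with the continuum argument of Theorem \ref{thm:wellposedness}. Fix $z\in \Gamma$ and consider the candidate upper barrier
\[
\bar v(x) \;=\; g(z) + K_1 |x-z|^\beta + K_2\, n^{1/2}\eps^{1+\alpha/2},
\]
with constants $K_1,K_2$ to be chosen. Since $u$ solves \eqref{eq:optimality}, a discrete comparison principle for $\L_{n,\eps,\zeta}$ holds: if $w:\X_n\to\R$ satisfies $\L_{n,\eps,\zeta}w\leq 0$ on $X_n$ and $w\geq u$ on the ``boundary'' of the region under consideration, then $w\geq u$ on that region. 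The goal is to arrange $\bar v \geq u$ on all of $\X_n$; a symmetric lower barrier gives the two-sided estimate \eqref{eq:holder}. The full-strength statement requires also $K_1 R^\beta \geq 2\max_\Gamma |g|$ (so that $\bar v$ dominates $u$ at the other points of $\Gamma$, using the a priori bound $\|u\|_{L^\infty(\X_n)}\leq \max_\Gamma |g|$ from the discrete maximum principle).

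The argument splits the graph into an ``outer'' region $\X_n\setminus B(z,K\eps)$ and an ``inner'' region $\X_n\cap B(z,K\eps)$, where $K$ is a sufficiently large absolute constant. In the outer region, the discrete operator $\L_{n,\eps,\zeta}$ applied to the smooth profile $|x-z|^\beta$ is a Monte-Carlo approximation of the continuum operator $\frac{1}{\rho}\divv(\gamma\rho^2\nabla \cdot)$; the hypothesis $\zeta\geq 1+\eps^{-\alpha}$ together with $|x-z|>K\eps$ ensures $\gamma_\zeta=\gamma$ at every pair contributing to the sum, so Proposition \ref{prop:barrier} provides a quantitative strict inequality $-\divv(\gamma\rho^2\nabla |x-z|^\beta) \gtrsim |x-z|^{-(\alpha+2-\beta)}$. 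I would then prove a barrier lemma asserting that, on an event of probability at least $1-Ce^{-cn\eps^{d+4}}$, this strict-sign information survives the Monte-Carlo fluctuation, so that $\L_{n,\eps,\zeta}\bar v(x)\leq 0$ for every $x\in X_n$ with $|x-z|>K\eps$. The fluctuation bound is by Bernstein's inequality applied to the sum of bounded independent random variables in \eqref{eq:graphlap}; the scale $\eps^{d+4}$ comes from the variance $\sim n^{-1}\eps^{-d}$ times the square of the strict-sign margin $\eps^{-(\alpha+2-\beta)}$, after absorbing $\beta<\alpha+2-d$. A union bound over the $n$ points of $X_n$ produces the stated probability $1-C\exp(-cn\eps^{d+4}+\log n)$.

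In the inner region, where $|x-z|^\beta$ is too singular for a Taylor-expansion/Monte-Carlo argument to succeed, I would use the variational structure of \eqref{eq:LapLearn}. The key observation is that the heavy weight $\gamma_\zeta \geq \eps^{-\alpha}$ near $z$ penalizes oscillations there very strongly: testing the energy $\GE_{n,\eps,\zeta}$ against a smooth competitor matching the boundary data at $\Gamma$ gives an a priori bound $\GE_{n,\eps,\zeta}(u)\leq C$, and combining this with a discrete Poincar\'e-type inequality on $B(z,K\eps)$ (with the singular weight) converts the energy bound into the pointwise estimate $|u(x_i)-g(z)|\leq C\, n^{1/2}\eps^{1+\alpha/2}$ for every $x_i\in \X_n\cap B(z,K\eps)$. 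Choosing $K_2$ larger than this constant yields $\bar v\geq u$ on the inner region. Applying the discrete comparison principle to $\bar v-u$ on the outer region, with boundary data the inner-region estimate and the bound at $\Gamma\setminus\{z\}$ just discussed, completes the proof.

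The main obstacle is the joint calibration of constants across the two regions: the Monte-Carlo argument needs $K_1$ large relative to the fluctuation margin (which scales like $\eps^{-(\alpha+2-\beta)/2}n^{-1/2}\eps^{-d/2}$ and determines the $n\eps^{d+4}$ exponent after balancing), while the variational argument needs $K_2$ large relative to the Poincar\'e constant on the scale-$\eps$ ball. Both estimates must meet continuously at the interface $|x-z|\sim \eps$, where $K_1\eps^\beta \sim K_2 n^{1/2}\eps^{1+\alpha/2}$, and the resulting constants must be independent of $n$ and $\eps$. Verifying that this calibration is consistent with the regime $n\eps^d \geq 1$ and $\zeta \geq 1+\eps^{-\alpha}$ is the core technical step.
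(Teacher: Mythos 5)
Your proposal is correct and follows essentially the same route as the paper: the same decomposition into a macroscopic region (where $|x-z|^\beta$ is shown to be a discrete supersolution via pointwise consistency of $\L_{n,\eps,\zeta}$ with the continuum operator, Proposition \ref{prop:barrier}, a Bernstein-type concentration bound, and a union bound over the $n$ points) and a local region $B(z,C\eps)$ handled by the variational structure, where the energy bound plus the weight $\gamma_\zeta\gtrsim\eps^{-\alpha}$ yields the $Cn^{1/2}\eps^{1+\alpha/2}$ oscillation term; the two pieces are then glued by the graph comparison principle exactly as in Propositions \ref{prop:macro_holder} and \ref{prop:micro_holder}. The only cosmetic difference is that the paper's barrier carries $\sup_{B(z,\delta_{\eps,\zeta})}u$ as its additive constant (so no calibration at the interface is needed), and the local step is realized as a common-neighbor estimate plus chaining rather than a weighted Poincar\'e inequality.
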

The proof of Theorem \ref{thm:holder}, given at the end of the section, relies on some preliminary results that we establish after a few remarks.
\begin{remark}
For the result in Theorem \ref{thm:holder} to be useful, we must choose $\eps_n\to 0$ so that $n\eps_n^{d+4}\gg \log(n)$ and $n\eps_n^{\alpha+2} \ll 1$. Therefore, we must have $\alpha > d+2$ and 
\begin{equation}\label{eq:epsbounds}
\left( \frac{\log(n)}{n} \right)^{1/(d+4)} \ll \eps_n \ll \left( \frac{1}{n} \right)^{1/(\alpha+2)}.
\end{equation}
\end{remark}
\begin{remark}\label{rem:HC}
If we replace $\gamma_\zeta$ with $\gamma_{\zeta,C\eps}$, as defined in Remark \ref{rem:modified}, then we can improve Theorem \ref{thm:holder} to read
\begin{equation}\label{eq:holder2}
|u(x_i) - u(z)| \leq C|x_i-z|^\beta+C\zeta^{-1/2}n^{1/2}\eps,
\end{equation}
under the same assumptions and with the same probability, except we also require $\zeta\geq 1 + C\eps^{-\alpha}$. In this model, the restrictive upper bound in \eqref{eq:epsbounds} is not required.
\end{remark}

We now turn to the proof of Theorem \ref{thm:holder}. We first recall a useful lemma from \cite{calder2017game}.
\begin{lemma}[Remark 7 from \cite{calder2017game}]\label{lem:ch}
Let  $Y_1,Y_2,Y_3,\dots,Y_n$  be  a sequence of \emph{i.i.d}~random variables on  $\R^d$ with Lebesgue density $\rho:\R^d\to \R$, let $\psi:\R^d \to \R$ be bounded and Borel measurable with compact support in a ball $B(x,h)$ for some $h>0$, and define
\[ Y = \sum_{i=1}^n \psi(Y_i).\]
Then for any $0 \leq \lambda \leq 1$
\begin{equation}\label{eq:con2}
\P\left( |Y-\mathbb{E}(Y)|\geq C\|\psi\|_{L^\infty(B(x,h))} nh^d\lambda\right) \leq 2\exp(-cnh^d\lambda^2),
\end{equation}
for all $0 < \lambda \leq 1$, where $C,c>0$ are constants depending only on $\|\rho\|_{L^\infty(B(x,h))}$ and $d$. 
\end{lemma}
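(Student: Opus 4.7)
The statement is a standard Bernstein-type concentration inequality for sums of i.i.d.\ bounded random variables whose support is concentrated in a small ball. The plan is to derive it by combining a second-moment estimate that exploits the compact support of $\psi$ with the classical Bernstein inequality.

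First, I would set $Z_i = \psi(Y_i) - \mathbb{E}[\psi(Y_i)]$, so that $Z_1,\dots,Z_n$ are i.i.d.\ mean-zero random variables with $|Z_i| \leq 2\|\psi\|_{L^\infty(B(x,h))}$ almost surely, and $Y - \mathbb{E}[Y] = \sum_i Z_i$. The key quantitative input is the variance bound
\[
\mathrm{Var}(\psi(Y_i)) \leq \mathbb{E}[\psi(Y_i)^2] \leq \|\psi\|_{L^\infty(B(x,h))}^2\, \mathbb{P}(Y_i \in B(x,h)) \leq C\|\psi\|_{L^\infty(B(x,h))}^2 \|\rho\|_{L^\infty(B(x,h))}\, h^d,
\]
where $C$ depends only on $d$ (through the volume of the unit ball). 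This single-variable variance estimate is the only place where the compact support of $\psi$ in $B(x,h)$ and the boundedness of $\rho$ enter, and it is what gives rise to the factor $h^d$ in the exponent.

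Next, I would apply the classical Bernstein inequality to the $Z_i$, obtaining, for every $t>0$,
\[
\mathbb{P}\bigl(|Y - \mathbb{E}[Y]| \geq t\bigr) \leq 2\exp\!\left(-\frac{t^2/2}{n\mathrm{Var}(\psi(Y_i)) + \tfrac{2}{3}\|\psi\|_{L^\infty(B(x,h))} t}\right).
\]
Setting $t = C_0 \|\psi\|_{L^\infty(B(x,h))}\, n h^d \lambda$ with $C_0$ chosen suitably large, the numerator becomes $\tfrac{1}{2}C_0^2 \|\psi\|_\infty^2 n^2 h^{2d}\lambda^2$, while the denominator is bounded, using the variance estimate, by $C_1 \|\psi\|_\infty^2 n h^d + \tfrac{2}{3} C_0 \|\psi\|_\infty^2 n h^d \lambda \leq C_2 \|\psi\|_\infty^2 n h^d$ since $\lambda \leq 1$. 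Taking the ratio yields an exponent of the form $-c\, n h^d \lambda^2$, which is exactly \eqref{eq:con2}.

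There is no real obstacle here; the only minor point to track carefully is that the assumption $\lambda\leq 1$ is used to ensure one is in the Gaussian (sub-Gaussian) regime of Bernstein's inequality, where the variance term dominates the linear term in the denominator. If one preferred to avoid quoting Bernstein, an equivalent route is to compute the moment generating function of $Z_i$ directly, using $|Z_i|\leq 2\|\psi\|_\infty$ together with the variance bound above to control $\mathbb{E}[e^{sZ_i}] \leq \exp(C s^2 \|\psi\|_\infty^2 h^d)$ for $|s| \lesssim 1/\|\psi\|_\infty$, and then apply a Chernoff bound and optimize in $s$, arriving at the same estimate.
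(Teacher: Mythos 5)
Your argument is correct, and it is the standard route: the variance bound $\mathrm{Var}(\psi(Y_i))\leq \|\psi\|_{L^\infty}^2\|\rho\|_{L^\infty(B(x,h))}\omega_d h^d$ coming from the compact support, followed by Bernstein's inequality with $t=C\|\psi\|_{L^\infty}nh^d\lambda$, where $\lambda\leq 1$ keeps the variance term dominant in the denominator. The paper itself gives no proof — it quotes the lemma as Remark 7 of \cite{calder2017game} — and the derivation there is exactly this Bernstein/Chernoff argument, so your proposal matches the intended proof.
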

We can use Lemma \ref{lem:ch} to prove pointwise consistency for our properly-weighted graph Laplacian. It extends, in a refined form, the results of \cite{calder2017game}[Theorem 5]. 
It is related to well known results on the pointwise consistency of the graph Laplacian \cite{Singer06}.

For simplicity we set 
\begin{equation}\label{eq:lapr}
\Delta_\rho \varphi =\rho^{-1}\divv\left( \gamma \rho^2 \nabla \varphi \right).
\end{equation}
\begin{theorem}\label{thm:consistency}
For $\delta>0$, let $D_{n,\eps,\delta}$ be the event that
\begin{equation}\label{eq:consistency}
\left|\L_{n,\eps, \infty}\varphi(x_i) - \tfrac{1}{2}\sigma_\eta\Delta_\rho \varphi(x_i)\right|\leq C( (\eps\beta_1 + \eps^2 \beta_2)M^{-(\alpha+2)} + \eps\beta_2 M^{-(\alpha+1)} + \delta\beta_3 M^{-\alpha})
\end{equation}
holds for all $x_i$ with $2\eps < \dist(x_i,\Gamma)\leq R/4$ and all $\varphi\in C^3(B(x_i,2\eps))$, where $\beta_k = \|\varphi\|_{C^k(B(x_i,2\eps))}$ and $M =\dist(x_i,\Gamma)-2\eps$. Then for $\eps \leq \delta\leq 1$ we have
\begin{equation}\label{eq:prob}
\P(D_{n,\eps,\delta}) \geq 1 -C\exp\left(  -c\delta ^2n\eps^{d +2} + \log(n) \right)
\end{equation}
\end{theorem}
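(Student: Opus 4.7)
The plan is to fix $x_i$ with $2\eps<\dist(x_i,\Gamma)\leq R/4$, condition on $x_i$, and split the error $\L_{n,\eps,\infty}\varphi(x_i)-\tfrac{1}{2}\sigma_\eta\Delta_\rho\varphi(x_i)$ into a variance piece (the centered empirical sum) and a bias piece (conditional expectation minus continuum operator). Because $\eta_\eps$ is supported in $B(0,2\eps)$ and $\dist(x_i,\Gamma)>2\eps$, the labels $z\in\Gamma$ (where $\gamma$ blows up) contribute nothing to $\L_{n,\eps,\infty}\varphi(x_i)$, so conditionally on $x_i$ one has $\L_{n,\eps,\infty}\varphi(x_i)=\sum_{j\neq i}\psi(x_j)$ with $\psi(y)=\frac{1}{2n\eps^2}(\gamma(x_i)+\gamma(y))\eta_\eps(x_i-y)(\varphi(y)-\varphi(x_i))$ supported in $B(x_i,2\eps)$ and $\{x_j\}_{j\neq i}$ i.i.d.\ with density $\rho$.

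For the bias, I would change variables $y=x_i+\eps z$ in the conditional expectation and Taylor-expand $\varphi$ to second order (with $O(\eps^3\beta_3|z|^3)$ remainder), $\gamma$ to first order (with $O(\eps^2 M^{-(\alpha+2)}|z|^2)$ remainder, using $|D^2\gamma|\lesssim M^{-(\alpha+2)}$ on $B(x_i,2\eps)\subset\{\dist(\cdot,\Gamma)\geq M\}$), and $\rho$ to first order. Radial symmetry $\int\eta(z)\,zz^T\,dz=\sigma_\eta I_d$ kills all odd-in-$z$ monomials, and the three surviving terms of total order $\eps^2$ combine into $\tfrac{\sigma_\eta}{2}(\rho\nabla\gamma\cdot\nabla\varphi+2\gamma\nabla\rho\cdot\nabla\varphi+\gamma\rho\Delta\varphi)(x_i)=\tfrac{1}{2}\sigma_\eta\Delta_\rho\varphi(x_i)$, as direct expansion of $\rho^{-1}\divv(\gamma\rho^2\nabla\varphi)$ confirms. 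Bounding each Taylor remainder in absolute value (foregoing further cancellation) against the pointwise bounds $\gamma\lesssim M^{-\alpha}$, $|\nabla\gamma|\lesssim M^{-(\alpha+1)}$, $|D^2\gamma|\lesssim M^{-(\alpha+2)}$ yields exactly the claimed deterministic error: $\eps\beta_3 M^{-\alpha}$ from the $\varphi$-remainder against $2\gamma\rho$ (absorbed in $\delta\beta_3 M^{-\alpha}$ since $\eps\leq\delta$); $\eps\beta_1 M^{-(\alpha+2)}$ and $\eps^2\beta_2 M^{-(\alpha+2)}$ from the $\gamma$-remainder against the first two terms of the $\varphi$-expansion; and $\eps\beta_2 M^{-(\alpha+1)}$ from $\nabla\gamma\cdot z$ against the first-order $\varphi$-remainder $O(\eps^2\beta_2|z|^2)$.

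For the variance, I would apply Lemma~\ref{lem:ch} to $\psi$ with support radius $h=2\eps$ and parameter $\lambda=\delta\eps$. From $|\varphi(y)-\varphi(x_i)|\leq 2\eps\beta_1$ and $\gamma(x_i)+\gamma(y)\leq CM^{-\alpha}$ on $\supp\psi$, one has $\|\psi\|_{L^\infty}\leq CM^{-\alpha}\beta_1/(n\eps^{d+1})$, so the lemma yields deviation bounded by $C\|\psi\|_\infty n\eps^d\lambda\leq CM^{-\alpha}\beta_1\delta\leq CM^{-\alpha}\beta_3\delta$ with probability at least $1-2\exp(-c\delta^2 n\eps^{d+2})$, matching the last term in the claim. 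To handle the ``for all $\varphi\in C^3(B(x_i,2\eps))$'' quantification, I would apply Lemma~\ref{lem:ch} separately to the finitely many random monomial averages $\frac{1}{n}\sum_j(\gamma(x_i)+\gamma(x_j))\eta_\eps(x_i-x_j)(x_j-x_i)^a$ with $|a|\leq 2$, whose linear combinations with coefficients $D^a\varphi(x_i)$ carry the fluctuation, while the cubic $\varphi$-remainder is bounded deterministically.

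A union bound over the $n$ vertices $x_i$ produces the $\log n$ term in the exponent of \eqref{eq:prob}. The main technical obstacle is the asymmetric Taylor bookkeeping forced by the singular weight: $\gamma$ must be expanded to only \emph{first} order, since a second-order expansion would leave an $M^{-(\alpha+3)}$ remainder worse than every term in the claim, while $\varphi$ must go to \emph{second} order to recover the Laplacian leading term. A secondary subtlety is that only $C^{1,\sigma}$ regularity of $\rho$ is in the standing hypotheses, but since $\rho$'s first-order remainder appears in the integrand only paired with the odd factor $\nabla\varphi\cdot z$, the resulting contribution falls within those already listed without invoking a second derivative of $\rho$.
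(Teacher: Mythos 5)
Your proposal is correct and follows essentially the same route as the paper's proof: condition on $x_i$, Taylor-expand $\varphi$ to second order so that the fluctuation is carried by finitely many monomial sums controlled via Lemma~\ref{lem:ch} with $h\sim\eps$ and $\lambda=\delta\eps$, expand $\gamma$ and $\rho$ to first order with radial symmetry producing $\tfrac12\sigma_\eta\Delta_\rho\varphi$, bound $\|\gamma\|_{C^k(B(x_i,2\eps))}\lesssim M^{-\alpha-k}$, and union-bound over the $n$ vertices. The only quibble is that your "odd factor" justification for discarding the $C^{1,\sigma}$ remainder of $\rho$ does not actually apply (the remainder itself has no parity), but the paper's own proof is no more careful on this point, and your monomial scheme with $|a|\leq 2$ correctly subsumes the needed probabilistic degree bound.
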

\begin{proof}
Let us write $\L$ in place of $\L_{n,\eps,\infty}$ for simplicity. We also define
\[w(x,y) = \frac{1}{2n\eps^2}(\gamma(x) + \gamma(y))\eta_{\eps}(x-y).\]
Then
\[\L u(x) = \sum_{y\in \X_n}w(x,y)(u(y) - u(x)).\]
By conditioning on the location of $x\in X_n$, we can assume without loss of generality that $x\in \Omega$ is a fixed (non-random) point, $B(x,2\eps)\subset \Omega$ and $\text{dist}(x,\Gamma)> 2\eps$. Let $\varphi \in C^3(B(x,2\eps))$, $p=D\varphi (x)$ and  $A=D^2\varphi (x)$. Note that 
\begin{align}\label{eq:ts}
 \L\varphi (x)=\sum_{i=1}^dp_i\sum_{y\in X_n}w(x,y)(y_i -x_i) &+\frac{1}{2}\sum_{i,j=1}^da_{ij}\sum_{y\in X_n}w(x,y)(y_i -x_i)(y_j -x_j) \nonumber\\
&\hspace{1.5in}+O\left( \eps^3\beta_3 \text{deg}(x) \right),
\end{align}
where $\text{deg}(x)$ is the degree given by
\[\text{deg}(x) = \sum_{y\in \X_n}w(x,y).\]
Since $\dist(y,\Gamma) \geq \dist(x,\Gamma) - 2\eps$ we have
\begin{align*}
w(x,y)&\leq \frac{C}{2n\eps^{d+2}}\left( \frac{1}{\dist(x,\Gamma)^\alpha} + \frac{1}{\dist(y,\Gamma)^\alpha} \right)\\
&\leq\frac{C}{n\eps^{d+2}(\dist(x,\Gamma)-2\eps)^\alpha}.
\end{align*}
By Lemma  \ref{lem:ch} 
\[\left|\text{deg}(x) -n\int_{B(x,2\eps)}w(x,y)\rho(y)\, dy  \right|\leq C \eps^{-2}(\dist(x,\Gamma)-2\eps)^{-\alpha},\] 
holds with probability at least $1-2\exp\left( -cn\eps^d \right)$. This implies
\[\text{deg}(x) \leq C \eps^{-2}(\dist(x,\Gamma)-2\eps)^{-\alpha}. \]
By another application of Lemma  \ref{lem:ch}, both
\[\left|\sum_{y\in X_n}w(x,y)(y_i -x_i) -n\int_{B(x,2\eps)}w(x,y)(y_i -x_i)\rho(y)\, dy  \right|\geq C\delta (\dist(x,\Gamma)-2\eps)^{-\alpha},\]
and
\begin{align*}
\left|\sum_{y\in X_n}w(x,y)(y_i -x_i)(y_j-x_j) -n\int_{B(x,2\eps)}w(x,y)(y_i -x_i)(y_j-x_j)\rho(y)\, dy  \right|&\\
&\hspace{-3cm}\geq C\delta \eps (\dist(x,\Gamma)-2\eps)^{-\alpha},
\end{align*}
occur with probability at most $2\exp\left(  -c\delta ^2n\eps^{d +2} \right)$ provided $0 < \delta \eps \leq 1$.
Thus, if $\eps \leq \delta\leq \eps^{-1}$ we have 
\begin{align}\label{eq:app}
\L\varphi (x)&=\frac{1}{2}\int_{B(0,2)}(\gamma(x) + \gamma(x+z\eps))\rho(x+z\eps)\boldeta(|z|)\left( \frac{1}{\eps}p\cdot z +\frac{1}{2}z\cdot A z \right)\, dz \\
& \hspace{2.5in}+O\left( \delta \beta_3 (\dist(x,\Gamma)-2\eps)^{-\alpha}\right)\notag
\end{align}
holds for all  $\varphi \in C^3(\R^d)$ with probability at least $1 -C\exp\left(  -c\delta ^2n\eps^{d +2} \right).$
Note that
\begin{align*}
\frac{1}{2}(\gamma(x) + \gamma(x+z\eps)) \rho(x+z\eps)&=\gamma(x)\rho(x) + \gamma(x)\nabla \rho(x)\cdot z \eps + \frac{1}{2}\rho(x)\nabla \gamma(x)\cdot z \eps\\
&\hspace{0.25in} + O(\eps^2\|\gamma\|_{C^2(B(x,2\eps))}+ \eps^3\|\gamma\|_{C^1(B(x,2\eps))}).
\end{align*}
We now have
\begin{align*}
\int_{B(0,2)}\gamma(x)\rho(x)\boldeta(|z|)\left( \frac{1}{\eps}p\cdot z +\frac{1}{2}z\cdot A z \right)\, dz &= \frac{1}{2}\gamma(x)\rho(x)\sum_{i,j=1}^d a_{ij}\int_{B(0,2)}\boldeta(|z|)z_iz_j\, dz\\
&=\frac{1}{2}\gamma(x)\rho(x)\sum_{i=1}^d a_{ii}\int_{B(0,2)}\boldeta(|z|)z_i^2\, dz\\
&=\frac{\sigma_\eta}{2}\gamma(x)\rho(x)\text{Trace}(A),
\end{align*}
\begin{align*}
&\int_{B(0,2)}\gamma(x)(\nabla \rho(x)\cdot z \eps)\boldeta(|z|)\left( \frac{1}{\eps}p\cdot z +\frac{1}{2}z\cdot A z \right)\, dz\\
&\hspace{2cm}= \gamma(x)\nabla \rho(x)\cdot\int_{B(0,2)}\boldeta(|z|)(p\cdot z)z\, dz + O(\eps \beta_2 \dist(x,\Gamma)^{-\alpha})\\
&\hspace{2cm}= \gamma(x)\nabla \rho(x)\cdot\sum_{i=1}^dp_i\int_{B(0,2)}\boldeta(|z|)z_iz\, dz + O(\eps \beta_2 \dist(x,\Gamma)^{-\alpha})\\
&\hspace{2cm}= \sigma_\eta\gamma(x)\nabla \rho(x)\cdot p + O(\eps \beta_2 \dist(x,\Gamma)^{-\alpha}),
\end{align*}
and
\begin{align*}
&\frac{1}{2}\int_{B(0,2)}\rho(x)(\nabla \gamma(x)\cdot z \eps)\boldeta(|z|)\left( \frac{1}{\eps}p\cdot z +\frac{1}{2}z\cdot A z \right)\, dz\\
&\hspace{4cm}= \frac{\sigma_\eta}{2}\rho(x)\nabla \gamma(x)\cdot p + O(\eps \beta_2 \|\gamma\|_{C^1(B(x,2\eps))}).
\end{align*}
Assembling these together with \eqref{eq:app} we have that
\begin{align*}
\L\varphi(x) &=\frac{\sigma_\eta}{2\rho(x)}\divv\left( \gamma \rho^2 \nabla \varphi \right) + O\Big(\eps\beta_1\|\gamma\|_{C^2(B(x,2\eps))} + \eps\beta_2\|\gamma\|_{C^1(B(x,2\eps))} \\
&\hspace{2in}+ \eps^2\beta_2 \|\gamma\|_{C^2(B(x,2\eps))}+ \delta\beta_3(\dist(x,\Gamma)-2\eps)^{-\alpha}\Big)
\end{align*} 
holds for all $\varphi\in C^3(B(x,2\eps))$ with probability at least $1 -C\exp\left(  -c\delta ^2n\eps^{d +2} \right).$ The proof is completed by computing
\[\|\gamma\|_{C^k(B(x,2\eps))} \leq C(\dist(x,\Gamma)-2\eps)^{-\alpha-k},\]
and applying a union bound over $x_1,\dots,x_n$.
\end{proof}

We now establish that the function $|x|^\beta$ for $0 < \beta < \alpha+2-d$ serves as a barrier (e.g., is a supersolution) on the graph with high probability.
\begin{lemma}[Barrier lemma]\label{lem:barrier}
Let $\alpha > d-2$ and fix any $0 < \beta < \alpha + 2-d$.  For $y\in \Gamma$ define $\varphi(x) = |x-y|^\beta$. Then the event that
\begin{equation}\label{eq:supersolution}
\L_{n,\eps,\infty}\varphi(x_i) \leq -c|x_i-y|^{-(\alpha+2-\beta)}
\end{equation}
for all $x_i$ with $C\eps < |x_i-y| \leq c$ occurs with probability at least $1 -C\exp\left(  -cn\eps^{d +4} + \log(n) \right)$.
\end{lemma}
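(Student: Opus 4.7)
The plan is to combine the continuum supersolution property of $|x-y|^\beta$ (Proposition~\ref{prop:barrier}) with the pointwise consistency of $\L_{n,\eps,\infty}$ established in Theorem~\ref{thm:consistency}, applied to the test function $\varphi(x)=|x-y|^\beta$. Since $y\in\Gamma$ and we restrict to $|x_i-y|\leq c$ for a small constant $c$ (which we may take $\leq R/4$), the closest point of $\Gamma$ to any $z$ near $y$ is $y$ itself, so $\gamma(z) = 1 + (r_0/|z-y|)^{\alpha}$ in this region. The function $\varphi$ is $C^{\infty}$ away from $y$, hence is an admissible test function in Theorem~\ref{thm:consistency} on every ball $B(x_i,2\eps)$ with $|x_i-y|>2\eps$.

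First I would apply (the proof of) Proposition~\ref{prop:barrier}, translated to be centered at $y$ and with the harmless extra constant $r_0^\alpha$ absorbed into the coefficient, to obtain constants $c_0,c_1>0$ such that
\[\Delta_\rho \varphi(x) = \tfrac{1}{\rho(x)}\divv\!\left(\gamma \rho^2 \nabla \varphi\right)(x) \leq -c_1 |x-y|^{-(\alpha+2-\beta)} \qquad \text{for all } 0<|x-y|\leq c_0.\]
This is the continuum strict supersolution property that I want to transplant to the graph. Choosing $c_0 \leq R/4$ ensures $\dist(\cdot,\Gamma) = |\cdot-y|$ throughout.

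Next I would invoke Theorem~\ref{thm:consistency} with $\delta=\eps$ applied to $\varphi$ on $B(x_i,2\eps)$ for every $x_i$ with $2\eps < |x_i-y| \leq c_0$. A direct computation yields $\|\varphi\|_{C^k(B(x_i,2\eps))} \leq C_k\, r^{\beta-k}$ for $k=1,2,3$, where $r := |x_i-y|$; and imposing $r \geq 4\eps$ guarantees $M = r - 2\eps \geq r/2$. Substituting into the error bound \eqref{eq:consistency} gives
\[\bigl|\L_{n,\eps,\infty}\varphi(x_i) - \tfrac{1}{2}\sigma_\eta \Delta_\rho \varphi(x_i)\bigr| \leq C_2\bigl(\eps\, r^{\beta-\alpha-3} + \eps^2\, r^{\beta-\alpha-4}\bigr),\]
and this estimate holds simultaneously for every $x_i\in\X_n$ on an event of probability at least $1 - C\exp(-cn\eps^{d+4}+\log n)$ by \eqref{eq:prob} with $\delta=\eps$.

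Combining the continuum and graph estimates on this event yields
\[\L_{n,\eps,\infty}\varphi(x_i) \leq -\tfrac{1}{2}\sigma_\eta c_1\, r^{\beta-\alpha-2} + C_2\bigl(\eps + \eps^2/r\bigr)\, r^{\beta-\alpha-3}.\]
The correction scales as $(\eps/r)$ times the main negative term, so choosing the threshold constant $C$ in the hypothesis $r > C\eps$ large enough (depending only on $\sigma_\eta$, $c_1$, $C_2$) absorbs the correction into at most half of the main term, giving $\L_{n,\eps,\infty}\varphi(x_i) \leq -c\,|x_i-y|^{-(\alpha+2-\beta)}$ as required. The main technical point is precisely this scaling check: it is what dictates the lower bound $r > C\eps$ in the statement and explains why the choice $\delta=\eps$ is natural, since a larger $\delta$ would inflate the $\delta\beta_3 M^{-\alpha}$ error term without compensating gain, while a smaller $\delta$ would degrade the probability exponent without improving the final constant.
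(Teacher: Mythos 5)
Your proposal is correct and follows essentially the same route as the paper's proof: apply Proposition~\ref{prop:barrier} (translated to $y$, using that $\gamma(x)=1+(r_0/|x-y|)^\alpha$ near $y$) to get the continuum strict supersolution bound, invoke Theorem~\ref{thm:consistency} with $\delta=\eps$ and $\beta_k\lesssim|x_i-y|^{\beta-k}$, and absorb the resulting $O(\eps|x_i-y|^{\beta-\alpha-3})$ error into the main negative term by restricting $|x_i-y|\geq C\eps$. The scaling check you emphasize is exactly the final step of the paper's argument.
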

\begin{proof}
Let us write $\L$ in place of $\L_{n,\eps,\infty}$ for simplicity. We use Theorem  \ref{thm:consistency} and Proposition \ref{prop:barrier}. Note in Theorem \ref{thm:consistency} that if we restrict $3\leq \eps |x_i-y|\leq R/4$ then 
\[M = \dist(x_i,\Gamma) - 2\eps = |x_i-y| - 2\eps \geq \frac{1}{3}|x_i-y|.\]
Also, for $\beta_k = \|\varphi\|_{C^k(B(x_i,2\eps))}$ we compute
\[\beta_k \leq C\beta|x_i-y|^{\beta-k}.\]
Hence, setting $\delta=\eps$ in Theorem \ref{thm:consistency} we obtain that
\begin{equation}\label{eq:consistencybarrier}
\left|\L\varphi(x_i) - \tfrac{1}{2}\sigma_\eta\Delta_\rho \varphi(x_i)\right|\leq C\eps |x_i-y|^{\beta-\alpha-3}( 1 + \eps|x_i-y|^{-1})
\end{equation}
holds for all $x_i$ with $3\eps\leq |x_i-y|\leq r$ with probability at least
\[1 -C\exp\left(  -cn\eps^{d +4} + \log(n) \right).\]
For the rest of the proof we restrict to the event that \eqref{eq:consistencybarrier} holds. 

Note that since $\beta-\alpha-3 < 0$ and $|x_i-y|\geq 3\eps$, it follows from \eqref{eq:consistencybarrier} that
\begin{equation}\label{eq:consistencybarrier2}
\L\varphi(x_i)\leq  \tfrac{1}{2}\sigma_\eta\Delta_\rho \varphi(x_i) + C\eps |x_i-y|^{\beta-\alpha-3}.
\end{equation}
Combining this with Proposition \ref{prop:barrier} we have
\[\L\varphi(x_i)\leq  -c|x_i-y|^{\beta-\alpha-2} + C\eps |x_i-y|^{\beta-\alpha-3} = -c|x_i-y|^{\beta-\alpha-2}(1 - C\eps|x_i-y|^{-1}),\]
provided $3\eps\leq |x_i-y|\leq c$. The proof is completed by restricting $|x_i-y|\geq 2C\eps$.
\end{proof}

The barrier lemma (Lemma \ref{lem:barrier}) establishes the barrier property away from the local neighborhood $B(x,C\eps)$. The singularity in the barrier (for $\alpha < d$) and the singularity in $\gamma$ prevent us from pushing the barrier lemma inside this local neighborhood.  Hence, the barrier can only be used to establish the following macroscopic continuity result.
\begin{proposition}[Macroscopic H\"older estimate]\label{prop:macro_holder}
Let $u\in L^2(\X_n)$ be the solution of \eqref{eq:optimality}, let $\alpha > d-2$, and fix any $0 < \beta < \alpha + 2-d$. For each $y\in \Gamma$ the event that
\begin{equation}\label{eq:macro_holder}
u(x_i) - u(y) \leq C|x_i-y|^\beta+\sup_{x\in \X_n\cap B(y,\delta_{\eps,\zeta})}(u(x)-u(y))
\end{equation}
holds for all $x_i\in \X_n$ occurs with probability at least $1 -C\exp\left(  -cn\eps^{d +4} + \log(n) \right)$, where $\delta_{\eps,\zeta}=\max\{C\eps,r_\zeta+2\eps\}$.
\end{proposition}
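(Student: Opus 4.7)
The argument is a discrete barrier/comparison-principle version of the continuum argument used in Theorem \ref{thm:wellposedness}. Work on the event of Lemma \ref{lem:barrier}, which has probability at least $1 - C\exp(-cn\eps^{d+4}+\log n)$. Define
\[v(x) := u(y) + C_0|x-y|^\beta + M, \qquad M := \sup_{x \in \X_n \cap B(y,\delta_{\eps,\zeta})}(u(x)-u(y)) \geq 0,\]
for a constant $C_0>0$ to be fixed. The claim \eqref{eq:macro_holder} is exactly $v \geq u$ on $\X_n$, so it suffices to prove this inequality. Split $\X_n$ into the annular region $A := \{x_i \in X_n : \delta_{\eps,\zeta} < |x_i-y| \leq c\}$, where $c$ is the small constant from Proposition \ref{prop:barrier} (shrink $c$ so that $c < R/2$, hence $y$ is the unique nearest label for every $x_i \in A$), and its complement in $\X_n$.

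On $A$ I will verify the strict supersolution inequality $\L_{n,\eps,\zeta}v(x_i)<0$. The role of the term $r_\zeta + 2\eps$ in the definition of $\delta_{\eps,\zeta}$ is that for every $x_i \in A$ and every $z \in \X_n$ with $|x_i-z|\leq 2\eps$, one has $\dist(z,\Gamma) = |z-y| \geq \delta_{\eps,\zeta}-2\eps \geq r_\zeta$; hence $\gamma_\zeta \equiv \gamma$ on the entire support of $\eta_\eps(x_i - \cdot)$, and therefore $\L_{n,\eps,\zeta}v(x_i) = C_0\L_{n,\eps,\infty}\varphi(x_i)$ for $\varphi(x) = |x-y|^\beta$. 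Since also $|x_i-y| > \delta_{\eps,\zeta} \geq C\eps$, Lemma \ref{lem:barrier} applies and yields $\L_{n,\eps,\zeta}v(x_i) \leq -cC_0|x_i-y|^{-(\alpha+2-\beta)}<0$.

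The comparison check on $\X_n \setminus A$ is routine and merely fixes $C_0$. For $x_i \in X_n$ with $|x_i-y|\leq \delta_{\eps,\zeta}$ (and for $x_i = y$), the definition of $M$ directly gives $u(x_i)-u(y)\leq M \leq v(x_i)-u(y)$. For $x_i \in \Gamma\setminus\{y\}$ we use $|x_i-y|\geq R$, and for $x_i \in X_n$ with $|x_i-y|>c$ we use the discrete maximum-principle bound $\min g \leq u \leq \max g$; both are handled by choosing $C_0 \geq (\max g-\min g)\max\{R^{-\beta},c^{-\beta}\}$. To finish, apply the strong discrete maximum principle to $w := v-u$: on $A$, $\L_{n,\eps,\zeta}u = 0$ and $\L_{n,\eps,\zeta}v<0$ give $\L_{n,\eps,\zeta}w<0$, while $w \geq 0$ on $\X_n \setminus A$ by the above. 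If $w$ attained its global minimum at some $x^* \in A$, each summand
\[\frac{1}{2n\eps^2}(\gamma_\zeta(x^*)+\gamma_\zeta(z))\eta_\eps(x^*-z)(w(z)-w(x^*))\]
would be nonnegative, contradicting $\L_{n,\eps,\zeta}w(x^*)<0$; hence the minimum lies in $\X_n \setminus A$, where $w\geq 0$, and thus $v \geq u$ throughout $\X_n$. The main conceptual point is the calibration of $\delta_{\eps,\zeta}$: the set $A$ must lie simultaneously in the ``classical'' region where $\gamma_\zeta = \gamma$ (forcing $\delta_{\eps,\zeta}\geq r_\zeta+2\eps$) and outside the singular core of the barrier (forcing $\delta_{\eps,\zeta}\geq C\eps$). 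The strict inequality provided by Lemma \ref{lem:barrier} is what lets us sidestep any graph-connectivity hypothesis in the maximum principle.
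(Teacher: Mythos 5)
Your proof is correct and follows essentially the same route as the paper's: the same barrier $\sup_{\X_n\cap B(y,\delta_{\eps,\zeta})}u + K|x-y|^\beta$, the event of Lemma \ref{lem:barrier}, and comparison on the annulus $\delta_{\eps,\zeta}<|x_i-y|\leq c$. Your write-up is in fact slightly more explicit than the paper's on two points it compresses — why $\gamma_\zeta\equiv\gamma$ on the $2\eps$-neighborhood of the annulus (so that Lemma \ref{lem:barrier}, stated for $\L_{n,\eps,\infty}$, applies to $\L_{n,\eps,\zeta}$), and the use of strictness to run the maximum principle without connectivity — though note the a priori bound $\min g\leq u\leq\max g$ you invoke for $|x_i-y|>c$ still implicitly assumes every component meets $\Gamma$, which is why the paper restricts to the connectivity event at the outset.
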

\begin{proof}
We note the graph is connected with probability at least $1-C\exp(-cn\eps^d + \log(n))$. The proof uses the barrier function 
\begin{equation}\label{eq:barrierfunction}
\varphi(x) = \sup_{B(y,\delta_{\eps,\zeta})}u + K|x-y|^{\beta}
\end{equation}
constructed in Lemma \ref{lem:barrier} for a sufficiently large $K$, and the maximum principle on a connected graph. By Lemma \ref{lem:barrier} we have
\begin{equation}\label{eq:super}
\L_{n,\eps,\zeta}\varphi(x_i) \leq -cK|x_i-y|^{-(\alpha+2-\beta)}
\end{equation}
for all $x_i$ with $\delta_{\eps,\zeta}\leq |x_i-y|\leq c$.  By the maximum principle we have
\[\min_{\Gamma}g \leq u \leq \max_{\Gamma}g.\]
Therefore, we can choose $K$ large enough so that $\varphi(x_i)> u(x_i)$ for $|x_i-y|\geq c$. We trivially have $u(x_i) < \varphi(x_i)$ for $|x_i-y|\leq \delta_{\eps,\zeta}$. Since $\L_{n,\eps,\zeta}(u - \varphi) \leq 0$ for $\delta_{\eps,\zeta}\leq|x_i-y|\leq c$, the maximum principle on a graph yields $u\leq \varphi$ on $\X_n$, which completes the proof.
\end{proof}

We now establish a local regularity result that allows us to fill in the gap within the ball $B(x,C\eps)$. The local result depends only on the variational structure of the problem, and does not use a barrier argument.
\begin{proposition}[Local H\"older estimate]\label{prop:micro_holder}
Let $u\in L^2(\X_n)$. For each $z\in \Gamma$ the event that
\begin{equation}\label{eq:micro_holder}
|u(x)-u(y)|^2 \leq \frac{Cn\eps^2}{\min\{\gamma_\zeta(x),\gamma_\zeta(y)\}}\GE_{n,\eps, \zeta}(u),
\end{equation}
holds for all $x,y\in \X_n\cap B(z,r)$ with $|x-y|\leq \eps$ occurs with probability at least $1 -C\exp\left(  -cn\eps^{d} + \log(n) \right)$.
\end{proposition}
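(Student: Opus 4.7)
\medskip

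\noindent\emph{Proof plan for Proposition \ref{prop:micro_holder}.} The idea is to bound $|u(x)-u(y)|$ by averaging, over a common neighborhood of $x$ and $y$, the triangle inequality $|u(x)-u(y)|^2 \leq 2|u(x)-u(w)|^2 + 2|u(y)-u(w)|^2$, and to recognize each averaged sum as controlled by the edge contributions of the graph energy $\GE_{n,\eps,\zeta}(u)$ incident to $x$ (resp.\ $y$). The one-sided weight $\gamma_\zeta(x)$ in the representation \eqref{eq:energy} produces exactly the factor $1/\min\{\gamma_\zeta(x),\gamma_\zeta(y)\}$.

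First I would set up the deterministic bound. Fix $x \in \X_n$. From \eqref{eq:energy} we have
\[
\GE_{n,\eps,\zeta}(u) \;\geq\; \frac{\gamma_\zeta(x)}{n^2\eps^2}\sum_{w\in\X_n} \eta_\eps(x-w)\,|u(x)-u(w)|^2,
\]
so $\sum_{w\in\X_n}\eta_\eps(x-w)|u(x)-u(w)|^2 \leq n^2\eps^2\gamma_\zeta(x)^{-1}\GE_{n,\eps,\zeta}(u)$, and similarly for $y$. For $x,y\in\X_n$ with $|x-y|\leq\eps$, set $B_{xy} := B\bigl((x+y)/2,\eps/2\bigr)$; every $w\in B_{xy}$ satisfies $|x-w|\leq \eps$ and $|y-w|\leq\eps$, hence $\eta_\eps(x-w),\eta_\eps(y-w)\geq \eps^{-d}$ by \eqref{eq:eta}. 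Therefore
\[
\sum_{w\in\X_n\cap B_{xy}}|u(x)-u(w)|^2 \;\leq\; \eps^d\sum_{w\in\X_n}\eta_\eps(x-w)|u(x)-u(w)|^2 \;\leq\; \frac{n^2\eps^{d+2}}{\gamma_\zeta(x)}\GE_{n,\eps,\zeta}(u),
\]
with the analogous bound for $y$.

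The probabilistic step is to guarantee, uniformly in the pair $(x,y)$, that $\#(\X_n\cap B_{xy}) \geq c\, n\eps^d$. I would obtain this by an $\eps$-net covering: choose a maximal $(\eps/10)$-separated set $\{c_1,\dots,c_N\}\subset\Omega$, so $N \leq C\eps^{-d}$, and apply Lemma~\ref{lem:ch} to each ball $B(c_j,\eps/4)$ to get $\#(\X_n\cap B(c_j,\eps/4))\geq c n\eps^d$ with probability at least $1-2\exp(-cn\eps^d)$; a union bound yields the joint event with probability at least $1 - C\eps^{-d}\exp(-cn\eps^d) \geq 1 - C\exp(-cn\eps^d + \log n)$, where the last inequality uses the standing assumption $n\eps^d \geq 1$. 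Then for any $x,y\in\X_n$ with $|x-y|\leq\eps$, some $c_j$ lies within $\eps/10$ of $(x+y)/2$, so $B(c_j,\eps/4)\subset B_{xy}$, giving the required lower bound on $\#(\X_n\cap B_{xy})$.

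Finally I would combine. On the good event, averaging the triangle inequality over $w\in\X_n\cap B_{xy}$ gives
\[
|u(x)-u(y)|^2 \;\leq\; \frac{2}{\#(\X_n\cap B_{xy})}\sum_{w\in\X_n\cap B_{xy}}\bigl(|u(x)-u(w)|^2 + |u(y)-u(w)|^2\bigr) \;\leq\; \frac{Cn\eps^2}{\min\{\gamma_\zeta(x),\gamma_\zeta(y)\}}\GE_{n,\eps,\zeta}(u),
\]
using $\tfrac{1}{\gamma_\zeta(x)}+\tfrac{1}{\gamma_\zeta(y)} \leq \tfrac{2}{\min\{\gamma_\zeta(x),\gamma_\zeta(y)\}}$. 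The main (only) non-routine step is the probabilistic covering estimate; everything else is a one-line energy bound combined with the triangle inequality.
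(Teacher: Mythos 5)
Your proposal is correct and follows essentially the same route as the paper's proof: a concentration estimate (Lemma~\ref{lem:ch}) plus a covering/union bound to guarantee $\gtrsim n\eps^d$ sample points in a common $\eps$-neighborhood of $x$ and $y$, then the triangle inequality through those common neighbors combined with the one-sided energy bound, which is exactly where the factor $\min\{\gamma_\zeta(x),\gamma_\zeta(y)\}^{-1}$ arises. The only cosmetic differences are that the paper partitions a cube around $z$ into hypercubes of side $\eps/(2\sqrt d)$ rather than using a maximal separated net of balls, and phrases the triangle-inequality step via $\max\{|u(x)-u(w)|,|u(y)-u(w)|\}\geq\tfrac12|u(x)-u(y)|$ instead of averaging.
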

\begin{proof}
Let $z\in \Gamma$, and fix $r>0$. Partition the cube $K:=\prod_{i=1}^d [z_i-r,z_i+r]$ into hypercubes $K_1,\dots,K_m$ of side length $h>0$, where $m = (2r/h)^d$. Let $Z_i$ denote the number of random variables falling in cube $K_i$. By Lemma \ref{lem:ch} we have
\begin{equation}\label{eq:conmeas}
\P(Z_i \leq \mathbb{E}[Z_i] - Cnh^d\lambda) \leq \exp\left( -cnh^d\lambda^2 \right)
\end{equation}
for any $0  <\lambda\leq 1$. Since $\mathbb{E}[Z_i] = nh^d$ we have
\begin{equation}\label{eq:bound}
\P\left(\min_{1\leq i \leq m}Z_i \leq \tfrac{1}{2}nh^d\right) \leq m\exp\left( -cnh^d \right).
\end{equation}
Let $x,y\in \X_n \cap B(z,r)$ such that $|x-y|\leq \eps$, and let $\bar{x} = (x+y)/2\in B(z,r)$. Let $K_i$ denote the cube to which $\bar{x}$ belongs. Then for all $w\in K_i$ we have $|\bar{x}-w|\leq \sqrt{d}h$. Therefore, if $\sqrt{d}h\leq \eps/2$ then
\[|x-w| \leq |x-\bar{x}| + |\bar{x} - w| \leq \frac{\eps}{2} + \sqrt{d}h \leq \eps,\]
and $|y-w|\leq \eps$ for all $w \in K_i$. It follows that
\[K_i \subset B(x,\eps)\cap B(y,\eps).\]
For the remainder of the proof, we set $h = \eps/(2\sqrt{d})$ and restrict ourselves to the event that
\begin{equation}\label{eq:event}
\min_{1\leq i \leq m}Z_i \geq cn\eps^d
\end{equation}

Let
\[K = \X_n\cap B(x,\eps)\cap B(y,\eps).\]
Note that for any $z\in K$ we have
\[\min\{|u(x)-u(z)|,|u(y)-u(z)|\}\geq \frac{1}{2}|u(x)-u(y)|.\]
Now we have
\begin{align*}
\GE_{n,\eps, \zeta}(u) &= \frac{1}{2n^2\eps^{2}}\sum_{x,y\in \X_n}(\gamma_\zeta(x) + \gamma_\zeta(y))\eta_\eps(x-y)|u(x)-u(y)|^2\\
&\geq \frac{1}{2n^2\eps^{2}}\left[\sum_{z\in \X_n}\gamma_\zeta(x)\eta_\eps(x-z)|u(x)-u(z)|^2 + \gamma_\zeta(y)\eta_\eps(y-z)|u(y)-u(z)|^2 \right]\\
&\geq \frac{c\min\{\gamma_\zeta(x),\gamma_\zeta(y)\}|K|}{n^2\eps^{d+2}}|u(x)-u(y)|^2.
\end{align*}
Since $K_i\cap \X_n\subset K$, we have $|K|\geq cn\eps^d$, and hence
\begin{equation}\label{eq:est}
|u(x)-u(y)|^2 \leq \frac{Cn\eps^2}{\min\{\gamma_\zeta(x),\gamma_\zeta(y)\}}\GE_{n,\eps, \zeta}(u),
\end{equation}
which completes the proof.
\end{proof}
We are now equipped to give the proof of Theorem \ref{thm:holder}.
\begin{proof}[Proof  of Theorem \ref{thm:holder}]
The proof combines the macroscopic H\"older estimate (Proposition \ref{prop:macro_holder}), and the local H\"older estimate (Proposition \ref{prop:micro_holder}), and is split into two steps.

1. We note that
\begin{equation}\label{eq:rzbound}
r_\zeta = \frac{r_0}{(\zeta-1)^{1/\alpha}} \leq C\eps,
\end{equation}
as $\zeta\geq 1+\eps^{-\alpha}$. 
By \eqref{eq:rzbound} and Theorem \ref{prop:macro_holder}, the event that
\begin{equation}\label{eq:macrobound}
|u(x) - u(z)| \leq C|x-z|^\beta + \sup_{x\in \X_n \cap B(z,C\eps)}|u(x)-u(y)|,
\end{equation}
holds for all $x\in \X_n$ occurs with probability at least $1-C\exp(-cn\eps^{d+4}+ \log(n))$.  

2. We note that with probability at least $1-C\exp\left( -cn\eps^d +\log(n) \right)$ we have $\GE_{n\eps,\zeta}(u) \leq C$ for a constant $C$. Therefore, by Proposition \ref{prop:micro_holder} we have that
\begin{equation}\label{eq:pathstep}
|u(x)-u(y)|^2\leq \frac{Cn\eps^2}{\min_{B(z,C\eps)}\gamma_\zeta}
\end{equation}
holds for all $x,y\in \X_n\cap B(z,C\eps)$ with $|x-y|\leq \eps$ with probability at least $1-C\exp\left( -cn\eps^d + \log(n) \right)$.
As in the proof of Proposition \ref{prop:micro_holder}, we partition the cube $K:=\prod_{i=1}^d [z_i-C\eps,z_i+C\eps]$ into hypercubes of side length $h < \eps$, and find that all cubes have at least one point from $\X_n$ with probability at least $1-C\exp(-cn\eps^d + \log(n))$. Thus, by traversing neighboring cubes, we can construct a path from $z\in \Gamma$ to any $x\in B(z,C\eps)$ consisting of at most a constant number of points from $\X_n\cap B(z,C\eps)$, with each step in the path smaller than $\eps$. Applying \eqref{eq:pathstep} along the path yields 
\[|u(x)-u(z)|^2\leq \frac{Cn\eps^2}{\min\{(C\eps)^{-\alpha},\zeta\}}\]
for all $x\in \X_n\cap B(z,C\eps)$ with probability at least $1-C\exp\left( -cn\eps^d + \log(n) \right)$. Since $\zeta\geq \eps^{-\alpha}$ we deduce
\[\sup_{x\in \X_n \cap B(z,C\eps)}|u(x)-u(z)|^2\leq Cn\eps^{2+\alpha},\]
which completes the proof.
\end{proof}

\section{Numerical experiments}
\label{sec:numerics}

\begin{figure}
\centering
\subfloat[$\alpha=0$]{\includegraphics[width=0.33\textwidth,clip = true, trim = 30 30 30 30]{GL_n1e5_eps2.png}}
\subfloat[$\alpha=0.5, \zeta = 50n\eps^2$]{\includegraphics[width=0.33\textwidth,clip = true, trim = 30 30 30 30]{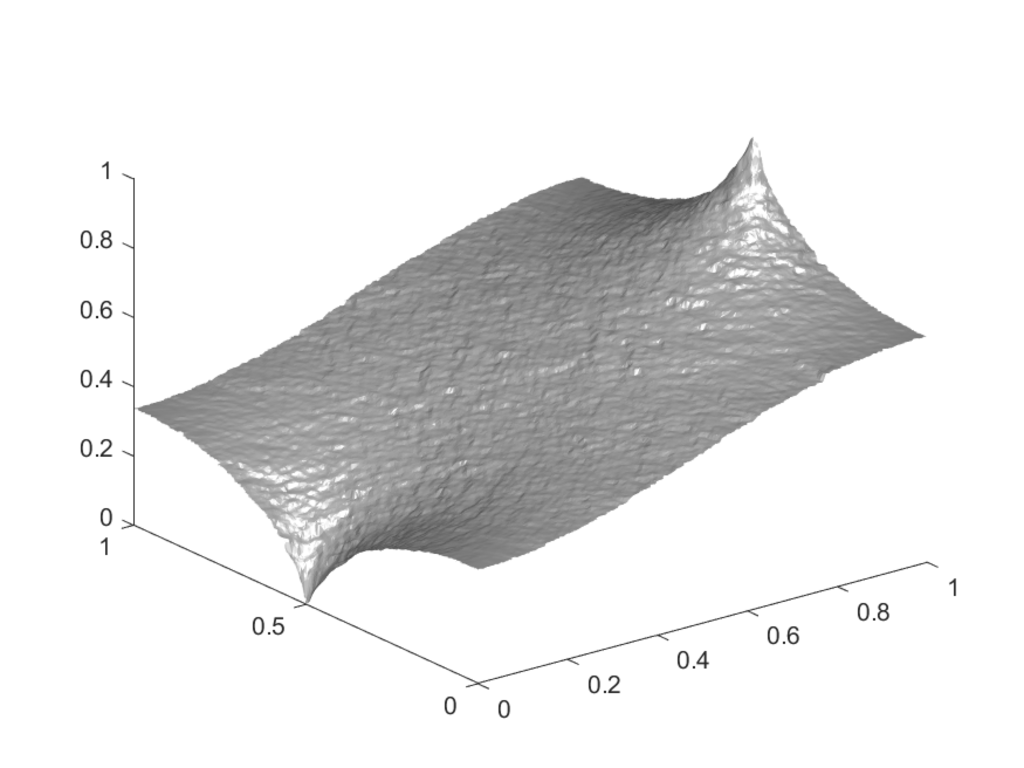}}
\subfloat[$\alpha=1, \zeta = 50n\eps^2$]{\includegraphics[width=0.33\textwidth,clip = true, trim = 30 30 30 30]{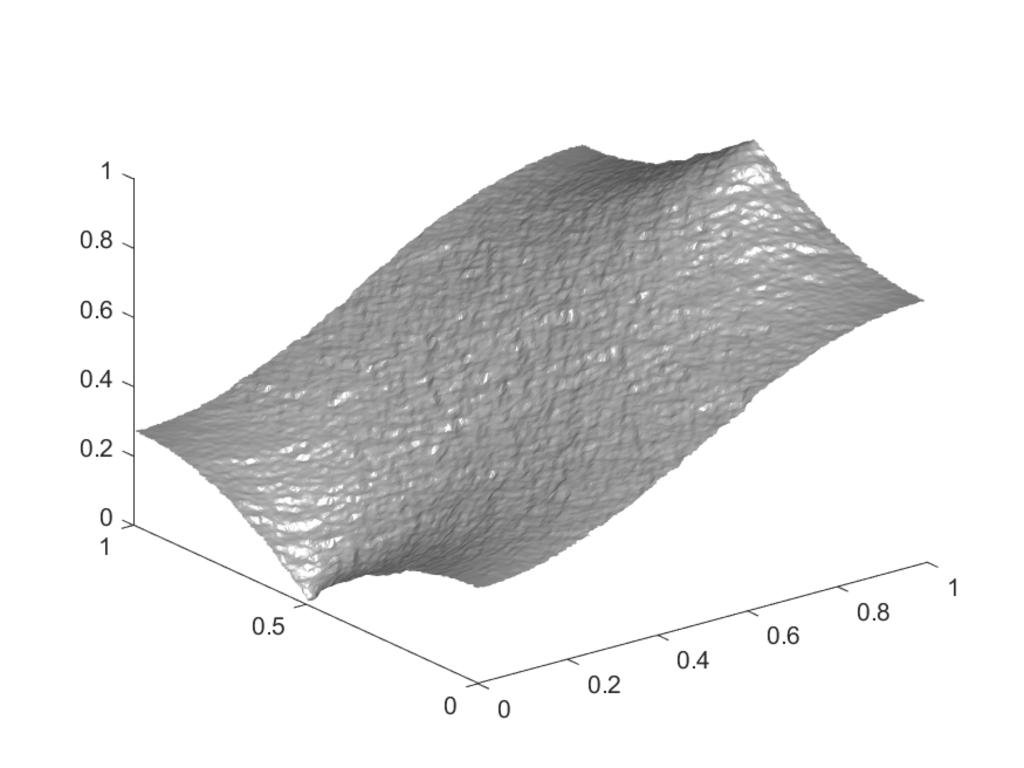}}\\
\subfloat[$\alpha=2,\zeta = 50n\eps^2$]{\includegraphics[width=0.33\textwidth,clip = true, trim = 30 30 30 30]{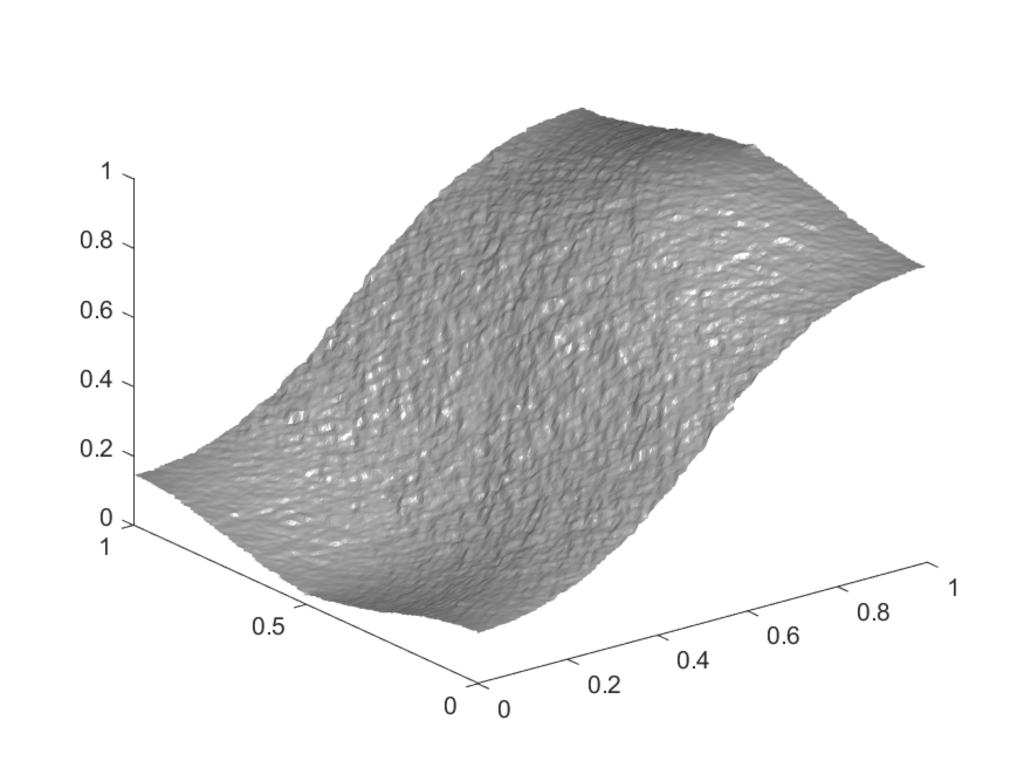}}
\subfloat[$\alpha=5, \zeta = 10^3 n\eps^2$]{\includegraphics[width=0.33\textwidth,clip = true, trim = 30 30 30 30]{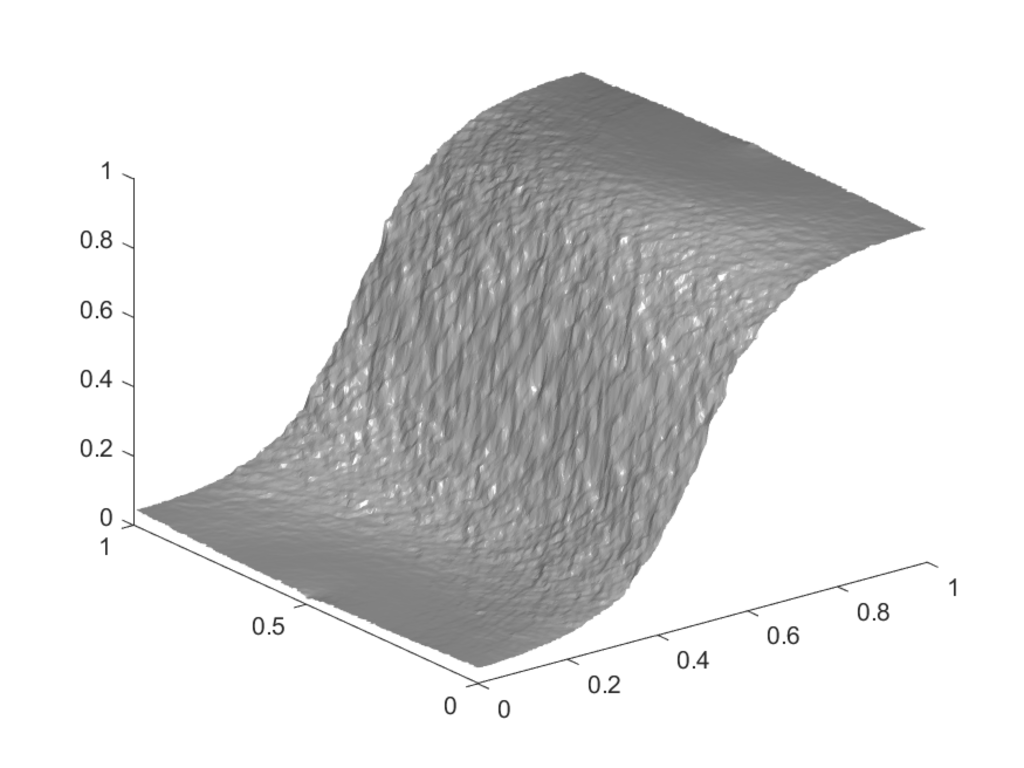}}
\subfloat[$\alpha=10, \zeta = 10^5n\eps^2$]{\includegraphics[width=0.33\textwidth,clip = true, trim = 30 30 30 30]{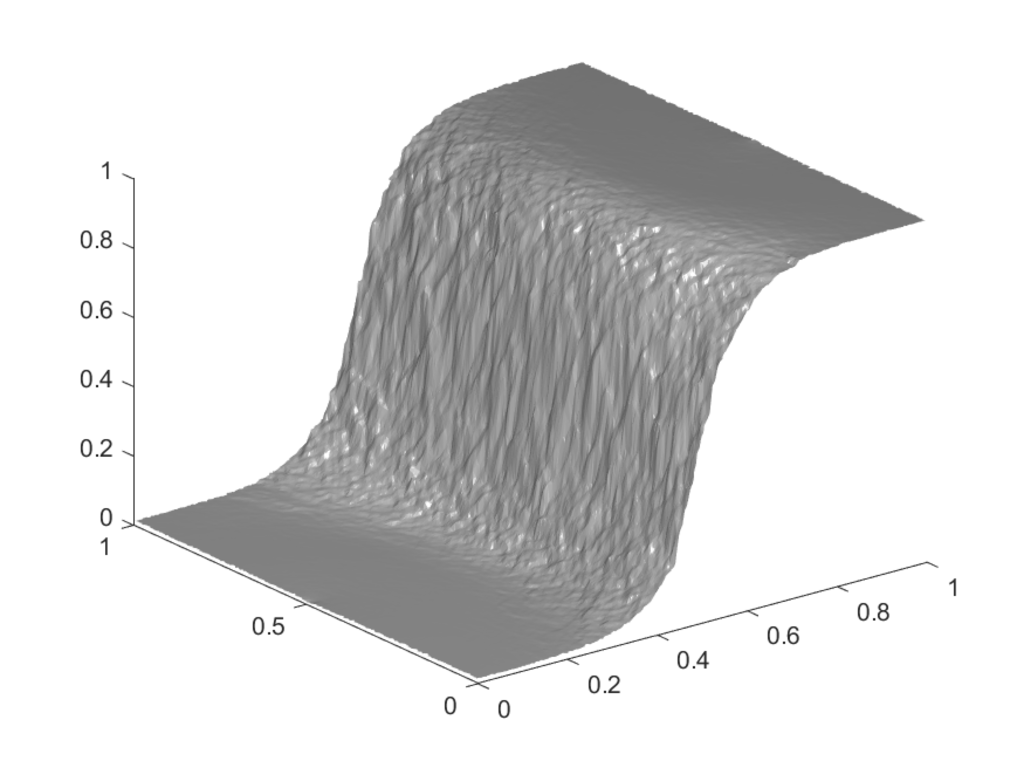}}
\caption{Comparison of (A) the standard graph Laplacian, and (B)--(F) our weighted graph Laplacian with various values of $\alpha$ and $\zeta$. The graph is a random geometric graph (described in the text) on $[0,1]^2$, and the labels are $g(0,0.5)=0$ and $g(1,0.5)=1$. }
\label{fig:alpha} 
\end{figure}

\begin{figure}
\centering
\subfloat[Graph Laplacian]{\includegraphics[width=0.33\textwidth,clip = true, trim = 30 30 30 30]{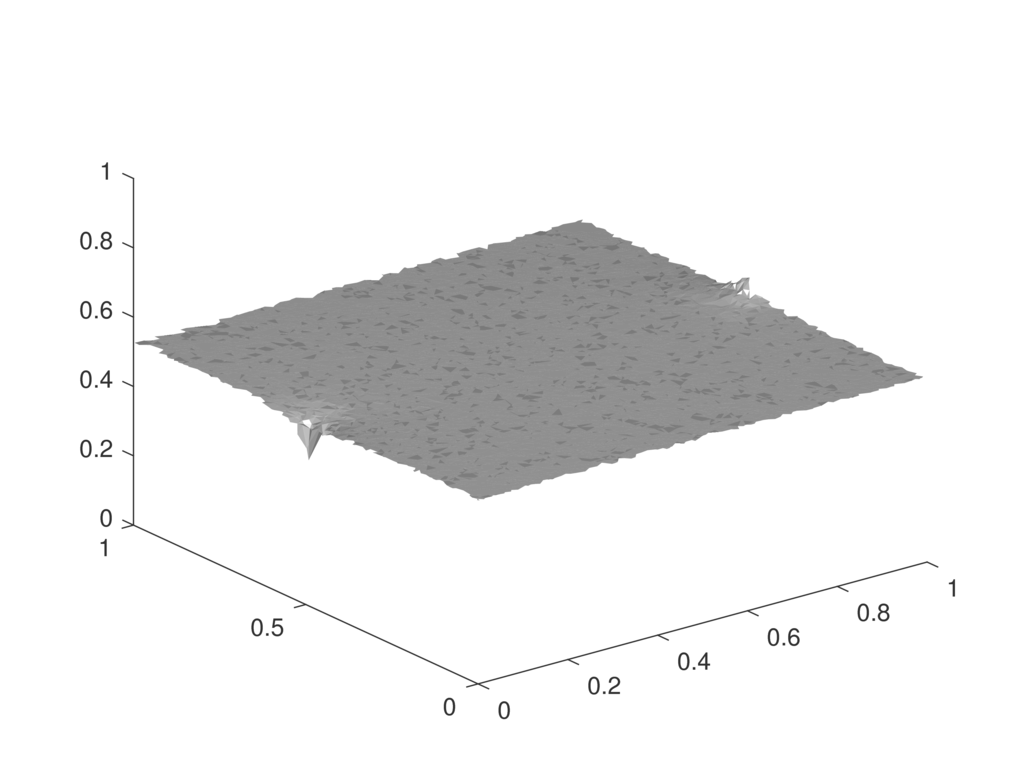}}
\subfloat[Weighted Lap. ~\cite{shi2017weighted}]{\includegraphics[width=0.33\textwidth,clip = true, trim = 30 30 30 30]{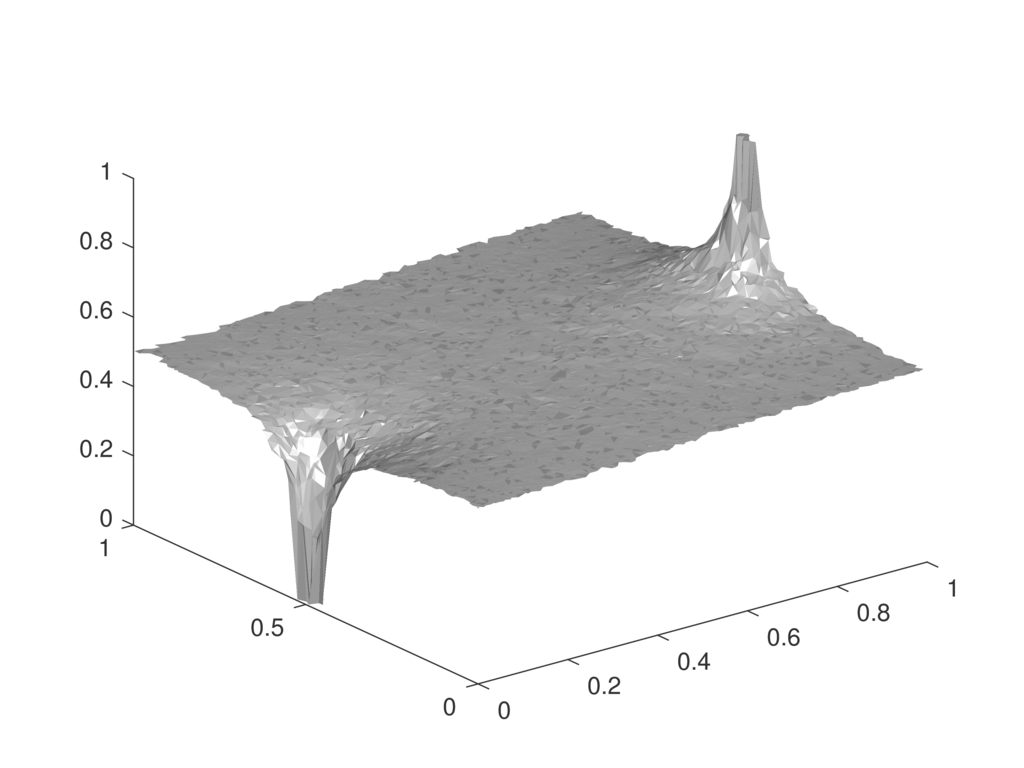}\label{fig:shi}}
\subfloat[PW Laplacian ($\alpha=2$)]{\includegraphics[width=0.33\textwidth,clip = true, trim = 30 30 30 30]{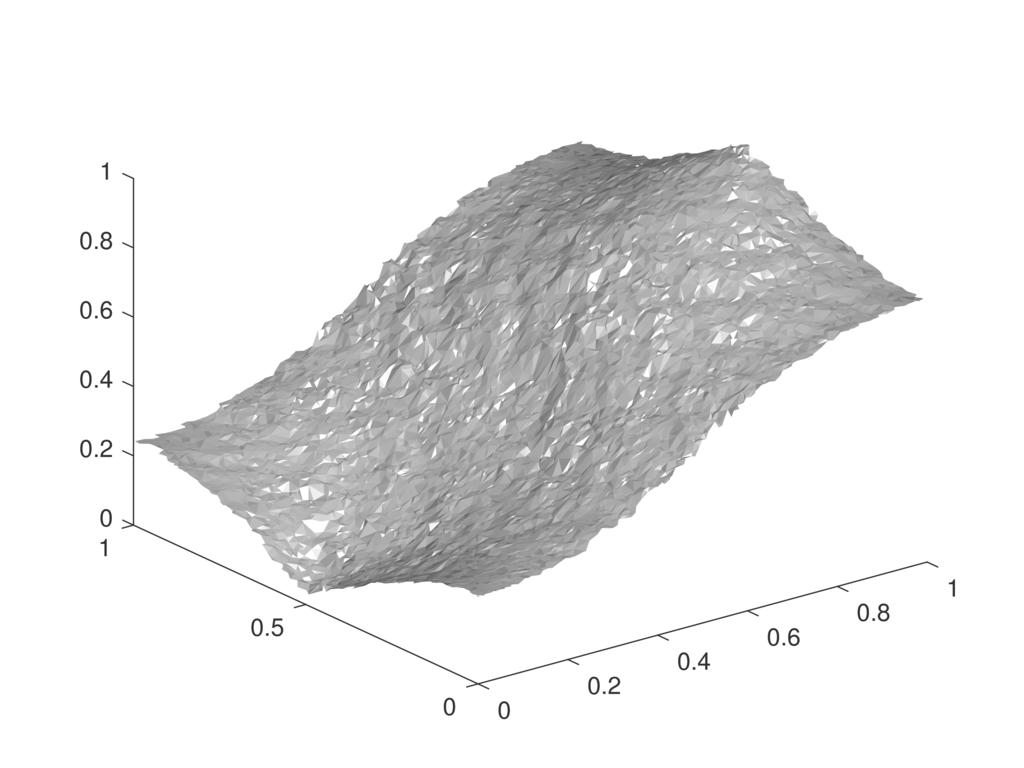}\label{fig:ours}}
\caption{Comparison of (a) the standard graph Laplacian, (b) the nonlocal graph Laplacian \cite{shi2017weighted}, and (c) our properly-weighted Laplacian. The graph is a random geometric graph (described in the text) on $[0,1]^3$, and the surfaces plotted are a slice of the learned function $u$ near $x_3=0.5$. The labels are $g(0,0.5,0.5) = 0$ and $g(1,0.5,0.5) = 1$. }
\label{fig:shiosherzhu} 
\end{figure}

We present the results of several numerical experiments illustrating the properly-weighted Laplacian and comparing it with the nonlocal \cite{shi2017weighted} and standard graph Laplacian on real and synthetic data. All experiments were performed in Matlab and use Matlab backslash to solve the graph Laplacian system. We mention there are indirect solvers that may be faster in certain applications, such as preconditioned conjugate gradient \cite{greenbaum1997iterative}, algebraic multigrid \cite{greenbaum1997iterative,ruge1987algebraic,brandt2011multigrid}, or more recent fast Laplacian solvers \cite{spielman2004nearly,kyng2015algorithms}. Thus, the CPU times reported below have the potential to be improved substantially.

\subsubsection{Comparison of the profiles obtained}
First, we perform an experiment with two labels on the box $[0,1]^d$ to illustrate our method and the differences with the nonlocal graph Laplacian \cite{shi2017weighted}. The graph is a sequence of $n$ \emph{i.i.d.}~random variables uniformly distributed on the unit box $[0,1]^d$ in $\R^d$, and two labels are given $g(0,0.5,\dots,0.5)=0$ and $g(1,0.5,\dots,0.5)=1$. We set $\eps = 2/n^{1/d}$ and $r_0=1$. The weights follow a Gaussian distribution with $\sigma = \eps/2$. In Figure \ref{fig:alpha} we show plots of the triangulated surface representing the learned function $u$ on the graph for various values of $\alpha$ and $\zeta$. Here, $n=10^5$ and $d=2$, and each simulation took approximately 1.5 seconds of CPU time. We notice that as $\alpha$ is increased, the learned functions are smoother in a vicinity of each label. The case of $\alpha=0$ corresponds to the standard graph Laplacian, and returns an approximately constant label $u=0.5$, which illustrates the degeneracy of the standard Laplacian with few labels. We note that as $\alpha$ is increased, we must increase $\zeta$ as well (recall \eqref{eq:gn}), otherwise the ball $B(x,r_\zeta)$ on which $\gamma$ is truncated to value $\zeta$ becomes very large, and the method reduces to the standard graph Laplacian. This simply illustrates that the implicit rate in the condition $\zeta \gg n\eps^2$ in Theorem \ref{thm:conv} depends on $\alpha$.

In Figure \ref{fig:shiosherzhu}, we use the same model, but with $n=2\times 10^5$ points in dimension $d=3$, and set $\zeta = 50n\eps^2$. For visualization, we show the learned function restricted to a neighborhood of the slice $x_3=0.5$. Figure \ref{fig:shi} illustrates the degeneracy of the nonlocal graph Laplacian \cite{shi2017weighted}, which returns a nearly constant label function. In contrast, our method, show in in Figure \ref{fig:ours}, smoothly interpolates between the two labels. Each simulation in Figure \ref{fig:shiosherzhu} took approximately 25 seconds to compute.

\begin{figure}
\centering
\subfloat[Graph Laplacian]{\includegraphics[width=0.33\textwidth,clip = true, trim = 30 0 0 0]{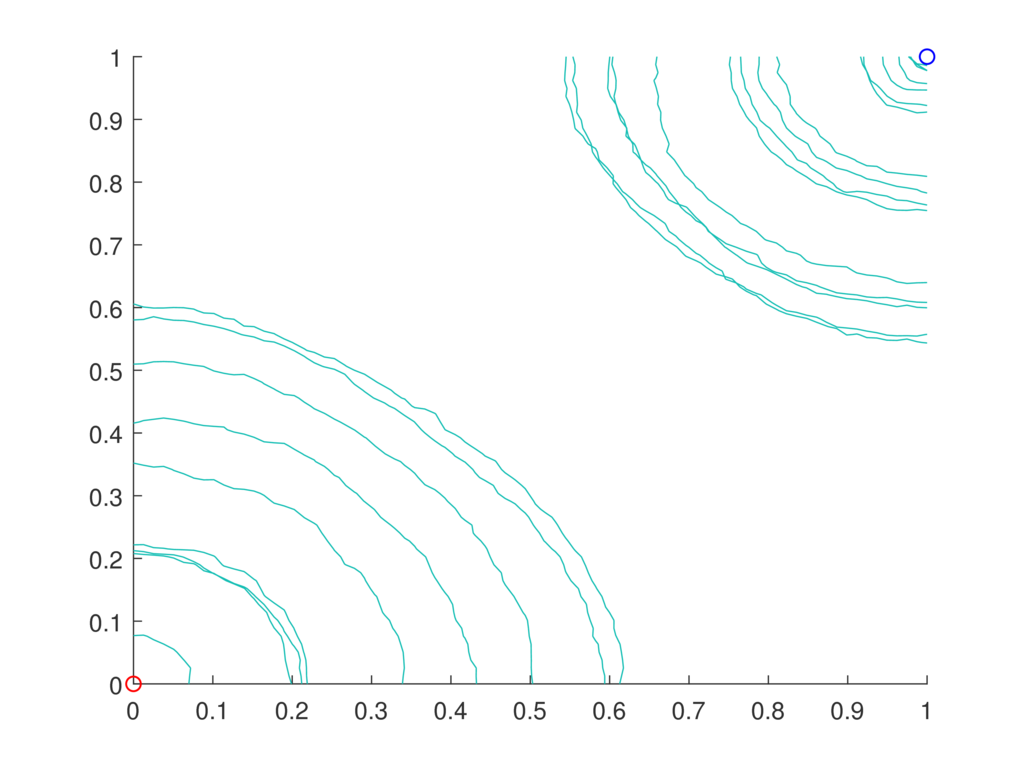}}
\subfloat[Weighted Lap.~\cite{shi2017weighted}]{\includegraphics[width=0.33\textwidth,clip = true, trim = 30 0 0 0]{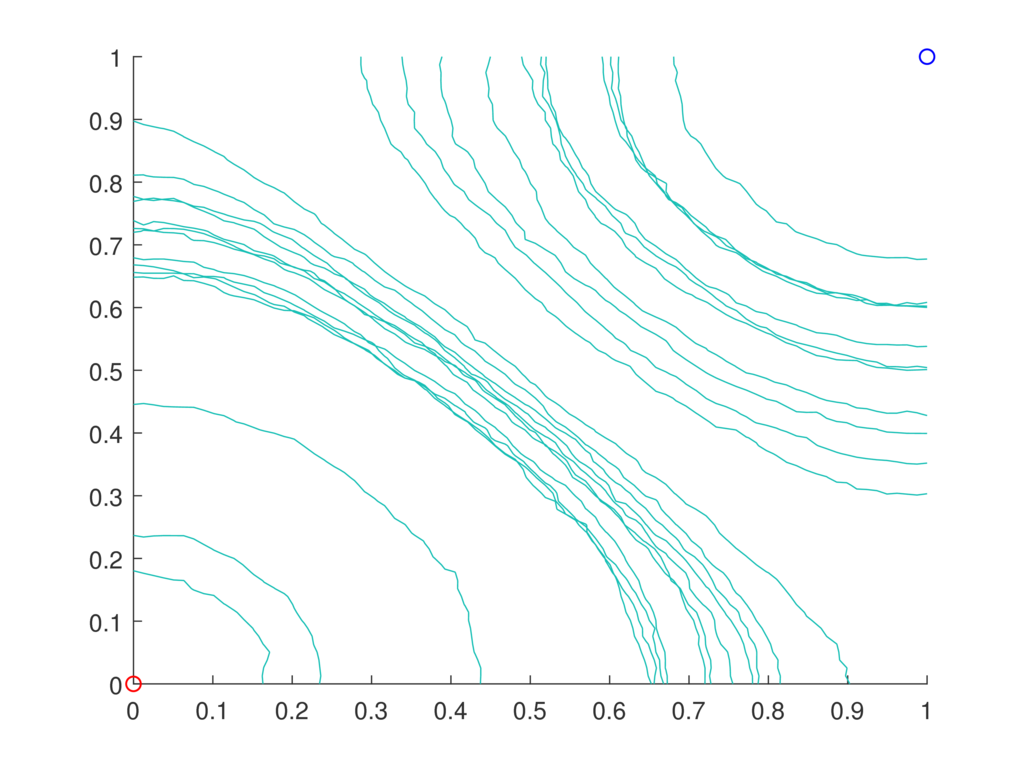}}
\subfloat[PW Laplacian ($\alpha=5$)]{\includegraphics[width=0.33\textwidth,clip = true, trim = 30 0 0 0]{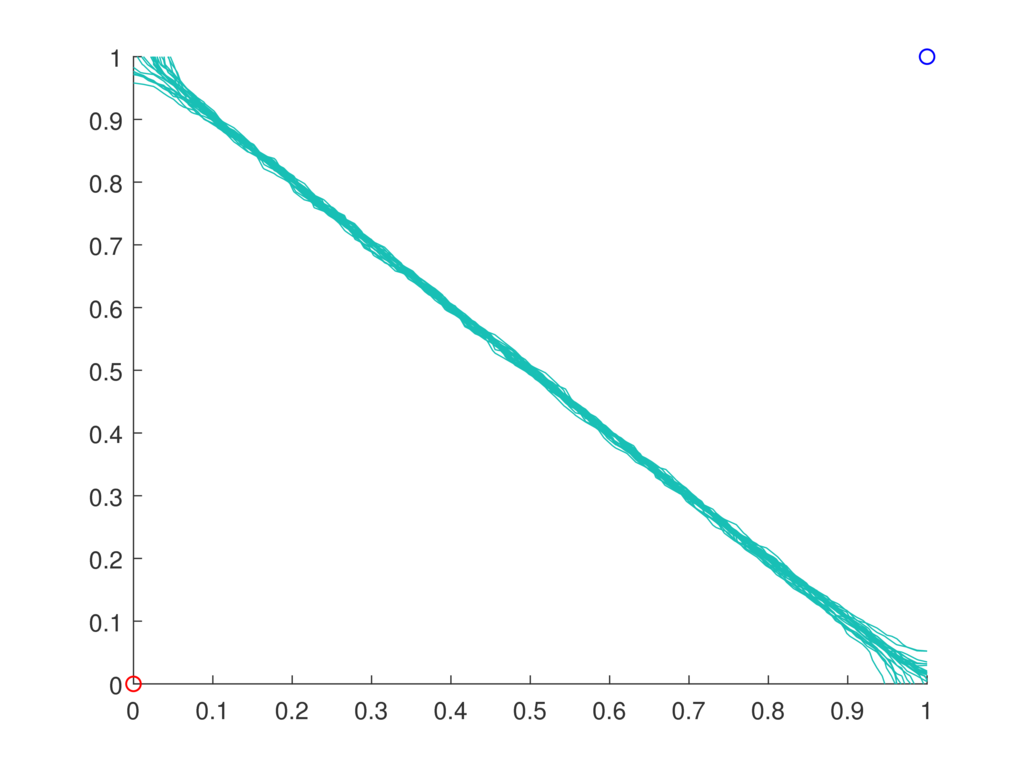}}
\caption{Comparison of decision boundaries plotted over 25 random trials for a synthetic classification problem for (A) the standard graph Laplacian, (B) the nonlocal graph Laplacian \cite{shi2017weighted}, and (C) our properly-weighted graph Laplacian. Two labels are given, $g(0,0)=0$ (red point) and $g(1,1)=1$ (blue point), and the graph is a uniform random geometric graph (described in the text). }
\label{fig:unif} 
\end{figure}

\subsubsection{Comparison of decision boundaries in 2D}
We now give a synthetic classification example. The graph consists of $n=10^5$ \emph{i.i.d} uniform random variables on $[0,1]^2$, and the weights are chosen to be Gaussian with $\sigma=\eps/2$. We chose $\alpha=5$, $\zeta=10^6 n\eps^2$, $r_0=1$, and $\eps = 3/\sqrt{n}$. Two labels, $g(0,0) = 0$ and $g(1,1) = 1$ are provided. Figure \ref{fig:unif} shows the decision boundaries (i.e., the level-set $\{u = 0.5\}$) over 25 trials for the standard graph Laplacian, the nonlocal Laplacian, and our method. Each trial took roughly 1.5 seconds to compute. We see that the nonlocal and standard Laplacian are highly sensitive to small variations in the graph, giving a wide variety of results over the 25 trials. This is a reflection of the degeneracy, or ill-posedness, in the small label regime, and suggests the methods are very sensitive to perturbations in the data. In contrast, our method very consistently divides the square along the diagonal.


\begin{figure}
\centering
\subfloat[Graph Laplacian]{\includegraphics[width=0.33\textwidth,clip = true, trim = 60 30 90 30]{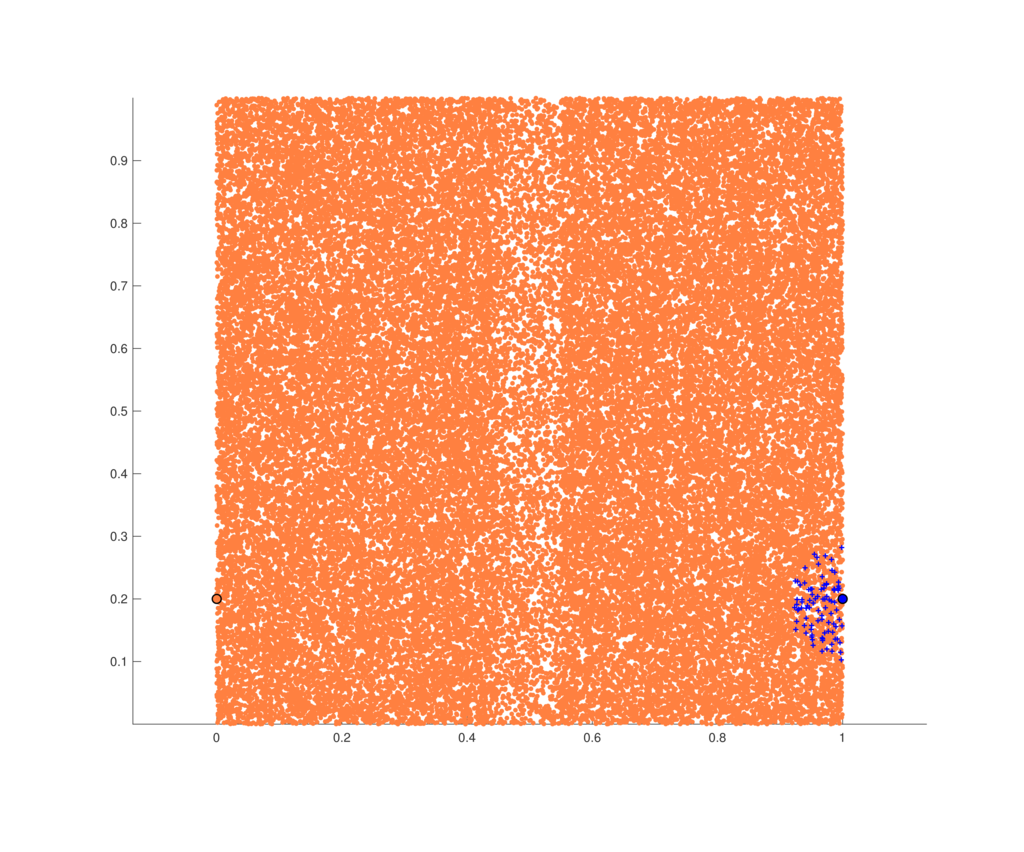}}
\subfloat[Weighted  Lap. \cite{shi2017weighted}]{\includegraphics[width=0.33\textwidth,clip = true, trim = 60 75 90 0]{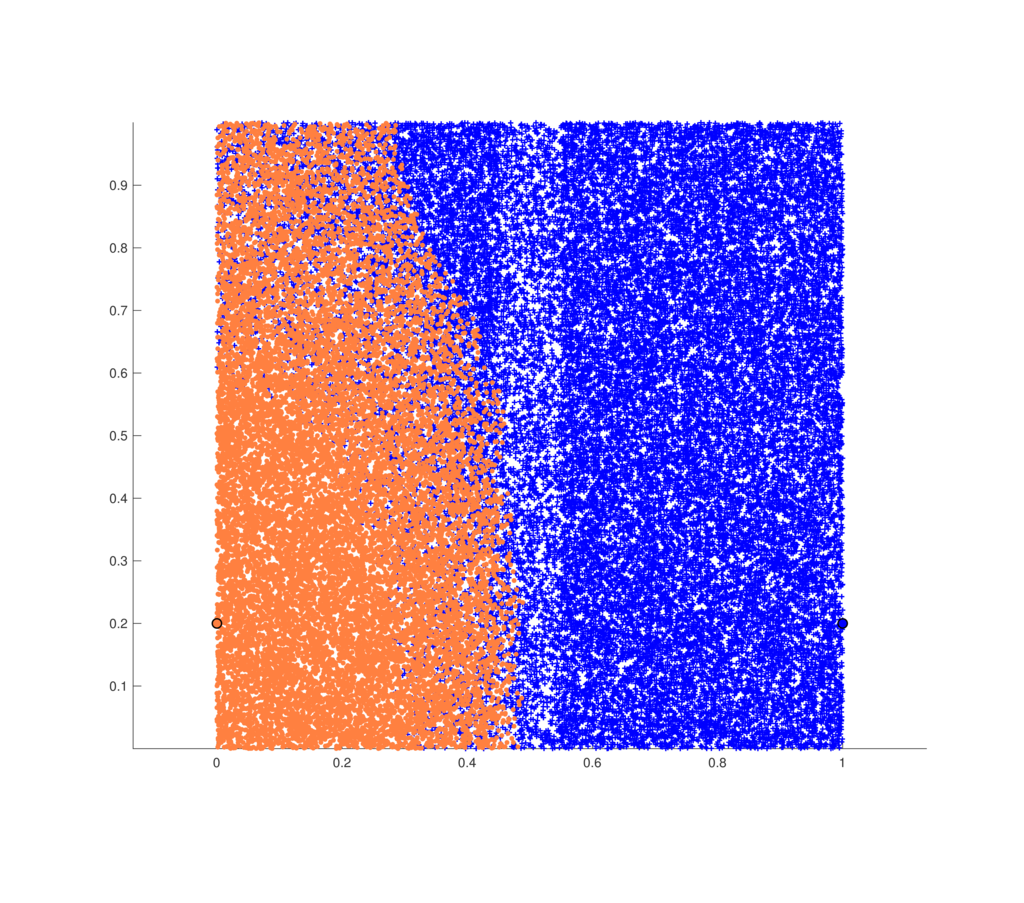}}
\subfloat[Our method ($\alpha=5$)]{\includegraphics[width=0.33\textwidth,clip = true, trim = 60 45 100 0]{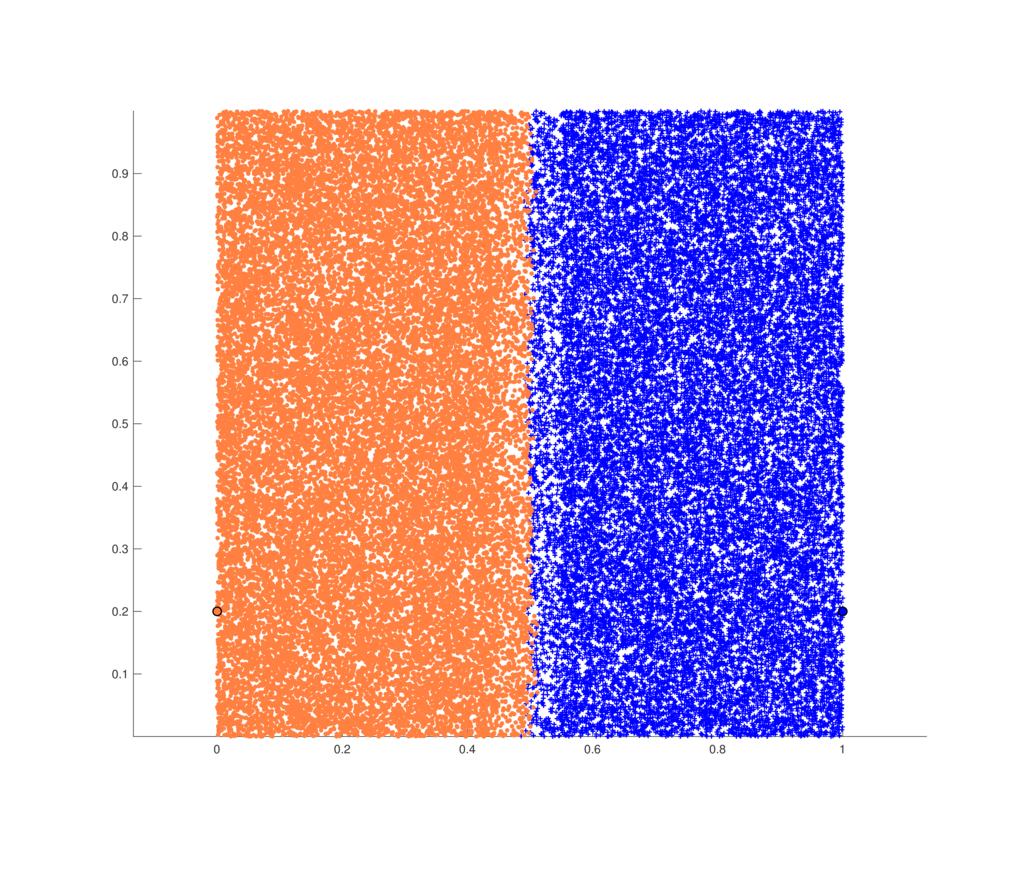}}
\caption{Comparison of results for a synthetic classification problem for (A) the standard graph Laplacian, (B) the nonlocal graph Laplacian \cite{shi2017weighted}, and (C) our weighted graph Laplacian. The domain is $[0,1]^3$ and the density is $1$ except for the strip $[0.45,0.55]\times [0,1]\times [0,1]$ where it is 0.6. The given labeled points are $g(0,0.2,0.2)=0$ and $g(1,0.2,0.2)=1$. There are $n=50,000$ points in the domain. Connectivity distance for the graph construction is $3/n^{\frac13}$ and for our method $\alpha=5$.}
\label{fig:strip} 
\end{figure}

\subsubsection{Comparison of classes obtained in 3D}

We consider samples of the measure supported on domain is $[0,1]^3$ and with density $1$ except for the strip $[0.45,0.55]\times [0,1]\times [0,1]$ where the density is 0.6. We considered 20 runs with $n=50,000$ points in the domain. The given labeled points are $g(0,0.2,0.2)=0$ and $g(1,0.2,0.2)=1$. Due to the symmetry, the correct decision boundary is the plane $x_1=0.5$. We used a connectivity distance for the graph construction of $\eps=3/n^{\frac13}$, which yielded a typical vertex degree of about $116$. We consider Gaussian weights  with $\sigma=\eps/2$. We chose $\alpha=5$, $\zeta=10^6 n\eps^2$, $r_0=1$ for our method.  A typical result for one run is illustrated on Figure \ref{fig:strip}. 
The standard graph Laplacian produced very unstable results with the average of  $49.8\%$ misclassified points. The nonlocal Laplacian of \cite{shi2017weighted} was also rather unstable with sometimes almost perfect decision boundary and sometimes large sections of misclassified points. On average it misclassified $11\%$ of points. Our method was stable and in all experiments identified the correct boundary, with average classification error of $0.25\%$. We observed similar outcomes for a variety of sets of parameters.

\begin{figure}
\centering
\includegraphics[width=\textwidth,clip=true,trim = 50 180 50 150]{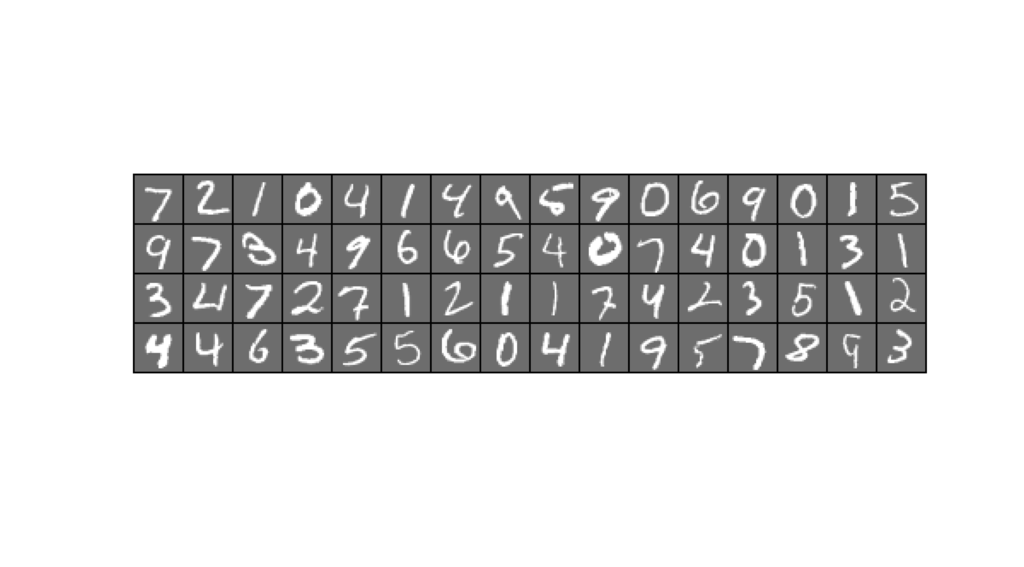}
\caption{Example of some of the handwritten digits from the MNIST dataset \cite{lecun1998gradient}.}
\label{fig:MNIST}
\end{figure}

\subsubsection{Comparison on the MNIST dataset}
Our last experiment considers classification of handwritten digits from the MNIST dataset,  which consists of 70,000 grayscale 28x28 pixel images of handwritten digits $0$--$9$ \cite{lecun1998gradient}. Figure \ref{fig:MNIST} shows examples of some of the images in the MNIST dataset. MNIST is estimated to have intrinsic dimension between $d=12$ and $d=14$~\cite{hein2005intrinsic,costa2006determining}, which suggests a larger value for $\alpha$ is appropriate. We used all $70,000$ MNIST images to construct the graph. Our construction is the same as in \cite{shi2017weighted}; we connect each data point to its nearest $50$ neighbors (in Euclidean distance), and assign Gaussian weights taking  $\sigma$ to be the distance to the $20^{\rm th}$ nearest neighbor. We symmetrize the graph by replacing the weight matrix $W$ with $\tfrac{1}{2}(W^T + W)$, which is done automatically by the variational formulation (recall \eqref{eq:stillsym}). We then take $L$ randomly chosen images from each class (digit) as labels, where $L=1,3,5,7,10$, and provide the true labels for these digits. The semi-supervised learning algorithm performs $10$ binary classifications, for each digit versus the rest, which generates functions $u^0,u^1,u^2,\dots,u^9$ on the graph. The label for each image $x$ in the dataset is chosen as the index $i$ for which $u^i(x)$ is maximal. The algorithm is standard in semi-supervised learning, and identical to the one used in \cite{shi2017weighted}.

\begin{table}[!t]
\centering
\begin{tabular}{|c|c|c|c|c|c|c|c|c|c|c|}
 \hline
\textbf{\# Labels} &\multicolumn{2}{c|}{\textbf{10}}&\multicolumn{2}{c|}{\textbf{30}} &\multicolumn{2}{c|}{\textbf{50}}&\multicolumn{2}{c|}{\textbf{70}}&\multicolumn{2}{c|}{\textbf{100}}\\
\hline
\textbf{Method} & Mean & Std & Mean & Std & Mean & Std & Mean & Std & Mean & Std\\
\hline
Graph Laplacian                       &14.2\% &6.3   &24.3\%   &11.9    &53\%    &10.9  &68\% &6.4  & 76.1\% & 7.6   \\
\hline
Weighted Lap.~\cite{shi2017weighted}  &67.9\% &8.7   &84.8\%    &2.7   &88.8\%      &1.1    & 89.6\% &1.3 &90.9\% &1.1  \\
\hline
PW-Laplacian                           &68\%  &8.6  &84.9\%     &2.7   &88.8\%       &1.1    & 89.6\% &1.3 &90.9\% &1.1  \\
\hline
\end{tabular}
\vspace*{5pt}
\caption{Accuracy for classification of MNIST handwritten digits with $10,30,50,70$ and $100$ labels via the standard graph Laplacian, the nonlocal weighted Laplacian~\cite{shi2017weighted}, and our properly-weighted Laplacian. The results are averaged over 10 trials, and the mean and standard deviation of accuracy are reported. }
\label{tab:MNIST}
\end{table}

For each value of $L\in \{1,3,5,7,10\}$, we ran the experiment described above 10 times, choosing the labels randomly (in the same way for each algorithm) every time. Each of the $500$ trials took approximately 15 minutes to compute in Matlab. The mean and standard deviation of accuracy are shown in Table \ref{tab:MNIST}. Our method performs very similarly to the nonlocal Laplacian \cite{shi2017weighted}, and both significantly outperform the standard graph Laplacian. We ran our method for $\alpha=2,5,10$, producing nearly identical results in all cases. We used $\zeta = 10^7$ and $r_0=0.1$. We found the results for our method were largely insensitive to many of the parameters in our algorithm; the accuracy begins to decrease when $\alpha<1$ and when $r_0 > 1$. We note that the accuracy scores reported in Table \ref{tab:MNIST} are much higher than those reported in \cite{shi2017weighted}; we believe this is because the authors in \cite{shi2017weighted} subsampled MNIST to 16,000 images. This observation speaks favorably to the semi-supervised paradigm that learning can be improved by access to additional unlabeled data. We note that the accuracy scores for our method and the nonlocal weighted Laplacian \cite{shi2017weighted} are identical (to one significant digit) for $30,50,70$, and $100$ labels. Most data points in MNIST are relatively far from their nearest neighbors, and so our nonlocal weights have less effect, compared to the low dimensional examples presented above. For this reason, the weight matrix for our method is very similar to the nonlocal Laplacian \cite{shi2017weighted}. We expect to see more of a difference in applications to larger datasets. For example, it would be interesting (and challenging) to apply these techniques to a dataset like ImageNet~\cite{deng2009imagenet}, which consists of over $14$ million natural images belonging to over 20,000 categories.

\appendix

\section{Background Material \label{sec:Back}}
Here we recall some of the notions our work depends on and establish an auxiliary technical result.


\subsection{\texorpdfstring{$\Gamma$}{Gamma}--Convergence} \label{subsec:Back:Gamma}

$\Gamma$-convergence was introduced by De Giorgi in 1970's to study limits of variational problems. 
We refer to~\cite{braides02,dalmaso93} for comprehensive  introduction to $\Gamma$-convergence.
We now recall the notion of $\Gamma$-convergence is in a random setting.

\begin{definition}[$\Gamma$-convergence]
\label{def:Back:Gamma:GamCon}
Let $(Z,d)$ be a metric space, $L^0(Z;\R\cup\{\pm \infty\})$ be the set of measurable functions from $Z$ to $\R\cup\{\pm \infty\}$, and $(\X,\P)$ be a probability space.
The function $\X\ni\omega\mapsto\E_n^{(\omega)} \in L^0(Z;\R\cup\{\pm \infty\})$ is a random variable.
We say $\E_n^{(\omega)}$ \textit{$\Gamma$-converge almost surely} on the domain $Z$ to $\E_\infty :Z\to \R\cup\{\pm\infty\}$ with respect to $d$, and write $\E_\infty = \Glim_{n \to \infty}\E_n^{(\omega)}$, if there exists a set $\X^\prime\subset \X$ with $\P(\X^\prime) = 1$, such that for all $\omega\in \X^\prime$ and all $f\in Z$:
\begin{itemize}
\item[(i)] (liminf inequality) for every sequence $\{u_n\}_{n=1,\dots}$ in $Z$ converging to $f$
\[\E_\infty(f) \leq \liminf_{n\to \infty}\E_n^{(\omega)}(u_n), \te{ and } \]
\item[(ii)] (recovery sequence) there exists a sequence $\{u_n\}_{n=1,2,\dots}$ in $Z$ converging to $f$ such that
\[\E_\infty(f) \geq \limsup_{n\to \infty}\E_n^{(\omega)}(u_n). \]
\end{itemize}
\end{definition}

For simplicity we suppress the dependence of $\omega$ in writing our functionals.
The almost sure nature of the convergence in our claims in ensured by 
considering the set of realizations of $\{x_i\}_{i=1,\dots}$ such that the conclusions of Theorem~\ref{thm:TransBound} hold (which they do almost surely).
 
 \red
An important result concerning $\Gamma$-convergence is that any subsequential limit of the sequence of minimizers of $\E_n$ is a minimizer of the limiting functional $\E_\infty$. So to show that the minimizers of $\E_n$ converge at least along a subsequence to a minimizer of $\E_\infty$it suffices to establish the precompactness of the set of minimizers.  \nc
We make this precise in the theorem below. 
Its proof can be found in~\cite[Theorem 1.21]{braides02} or~\cite[Theorem 7.23]{dalmaso93}.

\begin{theorem}[Convergence of Minimizers]
\label{thm:Back:Gamma:Conmin}
Let $(Z,d)$ be a metric space and $\E_n: Z\to [0,\infty]$ be a sequence of functionals.
Let $u_n$ be a minimizing sequence for $\E_n$.
If the set $\{u_n\}_{n=1,2,\dots}$ is precompact and $\E_\infty = \Glim_n\E_n$ where $\E_\infty:Z\to[0,\infty]$ is not identically $\infty$ then
\[ \min_Z\E_\infty = \lim_{n\to \infty} \inf_Z\E_n. \]
Furthermore any cluster point of $\{u_n\}_{n=1,2,\dots}$ is a minimizer of  $E_\infty$.
\end{theorem}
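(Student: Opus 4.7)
The plan is a textbook application of the two halves of the $\Gamma$-convergence definition together with the given precompactness hypothesis. By precompactness of $\{u_n\}_{n\in\N}$, I would first extract a subsequence $u_{n_k}$ converging to some $u^\star \in Z$, and aim to show simultaneously that $u^\star \in \argmin_Z \E_\infty$ and that $\E_{n_k}(u_{n_k}) \to \E_\infty(u^\star)$. Because the argument will apply uniformly to every cluster point and every extracted subsequence, this delivers both conclusions of the theorem at once.

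For the lower bound, the liminf inequality from Definition \ref{def:Back:Gamma:GamCon}, applied along $u_{n_k} \to u^\star$, gives
\[ \E_\infty(u^\star) \;\leq\; \liminf_{k\to\infty} \E_{n_k}(u_{n_k}) \;=\; \liminf_{k\to\infty} \inf_Z \E_{n_k}, \]
where the equality uses the convention that a minimizing sequence satisfies $\E_n(u_n) - \inf_Z \E_n \to 0$. For the upper bound, pick any competitor $v \in Z$ with $\E_\infty(v) < \infty$; such a $v$ exists precisely because of the non-triviality hypothesis $\E_\infty \not\equiv \infty$. The recovery-sequence clause supplies $v_n \to v$ with $\limsup_{n\to\infty} \E_n(v_n) \leq \E_\infty(v)$, and since each $v_n$ is admissible, $\inf_Z \E_n \leq \E_n(v_n)$; taking $\limsup$ yields $\limsup_{n\to\infty} \inf_Z \E_n \leq \E_\infty(v)$. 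Chaining the two halves produces
\[ \E_\infty(u^\star) \;\leq\; \liminf_{n\to\infty} \inf_Z \E_n \;\leq\; \limsup_{n\to\infty} \inf_Z \E_n \;\leq\; \E_\infty(v), \]
so $u^\star$ minimizes $\E_\infty$, the infimum is attained, and the chain collapses to equalities throughout, giving $\min_Z \E_\infty = \lim_{n\to\infty} \inf_Z \E_n$.

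There is no real analytic obstacle here: once the two $\Gamma$-convergence clauses and the minimizing-sequence property are in hand, the proof is a three-line sandwich of inequalities. The only minor points worth flagging are the convention on ``minimizing sequence'' (the statement would fail if $\E_n(u_n)$ were allowed to drift away from $\inf_Z \E_n$) and the role of the non-triviality of $\E_\infty$, which is used exactly once, to guarantee the existence of at least one admissible test function $v$ with $\E_\infty(v) < \infty$ in the upper-bound step.
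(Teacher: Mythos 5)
The paper itself does not prove this theorem: it is quoted as background, with the proof delegated to \cite[Theorem 1.21]{braides02} and \cite[Theorem 7.23]{dalmaso93}. Your argument is precisely the standard proof given in those references — sandwiching the liminf inequality along a convergent subsequence (supplied by precompactness) against the recovery-sequence upper bound for an arbitrary finite-energy competitor — and the roles you assign to the hypotheses (precompactness for the lower bound, $\E_\infty\not\equiv\infty$ for the upper bound) are exactly right.

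Two bookkeeping points deserve a sentence each in a full write-up. First, the liminf clause of Definition \ref{def:Back:Gamma:GamCon} is stated for full sequences, so applying it along $u_{n_k}\to u^\star$ needs the (easy, standard) remark that it passes to subsequences: extend $(u_{n_k})_k$ to a full sequence by setting $w_n=u^\star$ for $n\notin\{n_k\}$ and use $\liminf_{n}\E_n(w_n)\le\liminf_k\E_{n_k}(u_{n_k})$. Second, what your lower bound actually yields is $\E_\infty(u^\star)\le\liminf_k\inf_Z\E_{n_k}$, a liminf along the extracted subsequence, whereas your displayed chain silently replaces this by $\liminf_{n\to\infty}\inf_Z\E_n$ over the full sequence. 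This is enough to show that every cluster point minimizes $\E_\infty$ (since $\liminf_k\inf_Z\E_{n_k}\le\limsup_{n}\inf_Z\E_n\le\E_\infty(v)$ for every $v$), but to conclude $\lim_{n}\inf_Z\E_n=\min_Z\E_\infty$ you must either first pass to a subsequence realizing $\liminf_{n}\inf_Z\E_n$ and only then extract a convergent sub-subsequence of the $u_n$, or spell out the sub-subsequence criterion you allude to when saying the argument applies to every extracted subsequence. Either fix is one line; the proof is otherwise complete.
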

The theorem is also true if we replace minimizers with approximate minimizers.
\medskip

We note that $\Gamma$-convergence is defined for functionals on a common metric space.
Section~\ref{sec:TLp} overviews the metric space we use to analyze the asymptotics of our semi-supervised learning models, in particular it allows us to go from discrete to continuum.


\subsection{Optimal Transportation and Approximation of Measures} \label{sec:ot}

Here we recall the notion of optimal transportation between measures and the metric it introduces.
Comprehensive treatment of the topic can be found in books of Villani \cite{villani09} and Santambrogio \cite{santambrogio}.

Given a bounded, open set $\Omega\subset \R^d$, and probability measures $\mu$ and $\nu$ in $\mathcal{P}(\overline \Omega)$ we define the set $\Pi(\mu,\nu)$ of transportation plans, or couplings, between $\mu$ and $\nu$ to be the set of probability measures on the product space $\pi \in \mathcal{P}(\overline \Omega \times \overline \Omega)$ whose first marginal is $\mu$ and second marginal is $\nu$.
We then define the $p$-optimal transportation distance (a.k.a. $p$-Wasserstein distance)
by
\[ d_p(\mu, \nu) = 
\begin{cases}
\displaystyle{\inf_{\pi\in\Pi(\mu,\nu) } \left( \int_{\Omega\times \Omega} |x-y|^p \, d \pi(x,y)  \right)^\frac{1}{p}}  \quad & \te{if } 1 \leq p < \infty \\
\displaystyle{ \inf_{\pi\in\Pi(\mu,\nu)}  \pi\text{-}\esssup_{(x,y)} |x-y|  } & \te{if } p=\infty.
\end{cases} \]
If $\mu$ is absolutely continuous with respect to the Lebesgue measure on $\Omega$, then the distance can be rewritten using transportation maps, $T:  \Omega \to \Omega$, instead of transportation plans,
\[ d_p(\mu, \nu) = 
\begin{cases}
\displaystyle{\inf_{T_{\#}\mu=\nu } \left(  \int_\Omega |x-T(x)|^p \, d \mu(x)    \right)^\frac{1}{p}}  \quad & \te{if } 1 \leq p < \infty \\
\displaystyle{ \inf_{T_{\#}\mu=\nu}  \mu\text{-}\esssup_{x} |x-T(x)|  } & \te{if } p=\infty.
\end{cases} \]
where $T_{\#}\mu=\nu$ means that the push forward of the measure $\mu$ by $T$ is the measure $\nu$, namely that $T$ is Borel measurable and such that for all $U\subset\overline{\Omega}$, open, $\mu(T^{-1}(U)) = \nu(U)$. 

When $p< \infty$ the metric $d_p$ metrizes the $\text{weak}^*$ convergence of measures.

Optimal transportation plays an important role in comparing the discrete and continuum objects we study.
In particular, we use sharp estimates on the $\infty$-optimal transportation 
distance between a measure and the empirical measure of its sample.
In the form below, for $d \geq 2$, they were established in~\cite{GTS15trans}, which extended the related results in~\cite{AKT, LeightonShor, ShorYukich, Talagrand}.

\begin{theorem}
\label{thm:TransBound}
For $d \geq 2$, let $\Omega\subset\R^d$ be open, connected and bounded with Lipschitz boundary.
Let $\mu$ be a probability measure on $\Omega$ with density (with respect to Lebesgue) $\rho$ which is bounded above and below by positive constants.
Let $x_1,x_2,\dots$ be a sequence of independent random variables with distribution $\mu$ and let $\mu_n$ be the empirical measure.
Then, there exists constants $C \geq c > 0$ such that almost surely there exists a sequence of transportation maps $\{T_n\}_{n=1}^\infty$ from $\mu$ to $\mu_n$ with the property
\[ c \leq  \liminf_{n\to \infty} \frac{\|T_n-\Id\|_{L^\infty(\Omega)}}{\ell_n}  \leq \limsup_{n\to \infty} \frac{\|T_n-\Id\|_{L^\infty(\Omega)}}{\ell_n} \leq C \]
where
\begin{equation} \label{eq:elln_transport}
\ell_n =
\begin{cases}
  \frac{(\ln n)^{\frac{3}{4}}}{\sqrt{n}} \; & \text{if } d=2 \\ 
  \frac{(\ln n)^{\frac{1}{d}}}{n^{\frac{1}{d}}} & \text{if } d\geq 3.
\end{cases}
\end{equation}
\end{theorem}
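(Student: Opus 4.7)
The plan is to adapt the optimal matching arguments of Leighton--Shor (for $d=2$) and Shor--Yukich (for $d\geq 3$), which control the $\infty$-cost of matching iid uniform samples to a grid in the unit cube, and to transfer these results to the setting of a general Lipschitz domain $\Omega$ with a density $\rho$ bounded above and below by positive constants. I would organize the argument in three stages: reduction to the uniform case, a hierarchical (dyadic) construction of the transport map, and concentration estimates for the counts of points in dyadic cubes.

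\emph{Stage 1 (Reduction to uniform measure on a cube).} Using the Lipschitz boundary assumption, cover $\Omega$ by finitely many open sets on which $\Omega$ is the graph of a Lipschitz function and stitch together bi-Lipschitz charts to produce a bi-Lipschitz homeomorphism $\Phi:\Omega\to [0,1]^d$. The pushforward $\Phi_\#\mu$ has a density bounded above and below, and a Moser-type/Dacorogna--Moser construction yields a bi-Lipschitz map $\Psi:[0,1]^d\to [0,1]^d$ pushing $\Phi_\#\mu$ to $\mathcal L^d$. Both maps distort $L^\infty$ transportation distance only by multiplicative constants, so it suffices to estimate $d_\infty(\mathcal L^d,\tilde \mu_n)$ on $[0,1]^d$, where $\tilde \mu_n$ is the empirical measure of $n$ iid uniform samples.

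\emph{Stage 2 (Dyadic matching) and Stage 3 (Concentration).} Partition $[0,1]^d$ dyadically into cubes of side $2^{-k}$ for $k=1,\dots,K$ with $K\sim\tfrac1d\log_2 n$, chosen so that the finest cubes contain $O(\log n)$ sample points in expectation. For each scale $k$ and each cube $Q$ at that scale, the count $n\tilde\mu_n(Q)$ is binomial with mean $n 2^{-kd}$; Bernstein's inequality together with a union bound over the $2^{kd}$ cubes at scale $k$ and over all scales yields, almost surely for $n$ large, simultaneous control of cube-count fluctuations by $C\sqrt{n 2^{-kd}\log n}$. Using these estimates and a flow/Hall-type matching within cubes, one constructs a transport map $T_n$ scale by scale whose displacement at scale $k$ is bounded by a constant times the square root of the local fluctuation divided by $n^{1/2}\cdot 2^{-k(d-2)/2}$; for $d\geq 3$ this displacement is geometric in $k$ and dominated by the finest scale, giving $\|T_n-\Id\|_\infty\lesssim (\log n)^{1/d}n^{-1/d}$.

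\emph{Main obstacle and the $d=2$ exponent.} The delicate case is $d=2$: a naive scale-by-scale sum would produce an extra $\sqrt{\log n}$ factor and yield only $(\log n)^{1/2}n^{-1/2}$. Obtaining the sharp $(\log n)^{3/4}n^{-1/2}$ rate requires the Leighton--Shor hierarchical matching, which exploits cancellations between adjacent scales by treating the discrepancies as a harmonic-type extremal problem on a graph of dyadic cubes, rather than summing moduli scale by scale. This is where the bulk of the technical work lies and is the main obstacle; once the upper bound is established, the matching lower bound follows by identifying, with high probability, a scale at which some dyadic cube carries an anomalous number of samples and forces any transport plan to move mass across distance $\gtrsim \ell_n$. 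Finally, propagating these rates from $([0,1]^d,\mathcal L^d)$ back to $(\Omega,\mu)$ via the bi-Lipschitz maps of Stage~1 is routine and preserves the stated constants up to a fixed factor.
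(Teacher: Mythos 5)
The paper does not prove this theorem at all: it is quoted as background from Garc\'ia Trillos--Slep\v cev \cite{GTS15trans}, which in turn builds on \cite{AKT,LeightonShor,ShorYukich,Talagrand}. Your outline does follow the broad strategy of that proof (reduction to the unit cube, dyadic/hierarchical matching with Bernstein-plus-union-bound concentration, Shor--Yukich for $d\geq 3$, Leighton--Shor for $d=2$), but the one step you actually add beyond citing the matching literature --- Stage 1 --- is wrong as stated. A bounded, connected, open set with Lipschitz boundary need \emph{not} admit a bi-Lipschitz (or even topological) homeomorphism onto $[0,1]^d$: an annulus in $\R^2$ is a counterexample, being non-simply-connected. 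The correct reduction, which is a genuine piece of work in \cite{GTS15trans}, partitions $\Omega$ into finitely many pieces $\Omega_1,\dots,\Omega_K$, each bi-Lipschitz to a cube, and must then cope with the fact that the empirical mass $\mu_n(\Omega_i)$ differs from $\mu(\Omega_i)$ by $O(\sqrt{\log n/n})$; this excess has to be rerouted between adjacent pieces (using the connectivity hypothesis) by shifting density across the $(d-1)$-dimensional interfaces in a boundary layer of width $O(\sqrt{\log n/n})=o(\ell_n)$. Your proposal silently assumes this problem away.

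Two further points. First, your account of why $d=2$ is delicate has the inequality backwards: the naive scale-by-scale sum gives $(\log n)^{3/2}n^{-1/2}$ (each of the $\sim\log n$ scales contributes $\sqrt{k/n}$), which is \emph{worse} than $(\log n)^{3/4}n^{-1/2}$, not the better rate $(\log n)^{1/2}n^{-1/2}$ you state --- if the naive argument gave the latter it would contradict the lower bound. Second, the lower bound does not "follow" easily for $d=2$: an empty ball (or single anomalous cube) argument only yields $\sqrt{\log n/n}\ll(\log n)^{3/4}n^{-1/2}$, and the matching lower bound at the $(\log n)^{3/4}$ rate is itself a substantial part of the Leighton--Shor analysis, requiring discrepancies accumulated coherently across scales. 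For $d\geq 3$ your empty-region idea does work. Since the paper itself treats this theorem as a citation, the honest version of your proof is "apply \cite{GTS15trans}"; if you intend to reprove it, the domain decomposition and mass-correction step, and the two-dimensional lower bound, are exactly the places where real work is required and where your sketch currently fails.
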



\subsection{The \texorpdfstring{$TL^p$}{TLp} Space} \label{sec:TLp}

The discrete functionals we consider (e.g. $\E_{n, \eps_n, \zeta_n}$)  are defined for functions $u_n : \X_n \to \R$, while the limit functional $\E$ acts on functions $f:\Omega \to \R$, where $\Omega$ is an open set.
We can view $u_n$ as elements of $L^p(\mu_n)$ where $\mu_n$ is the empirical measure of the sample $\mu_n = \frac{1}{n} \sum_{i=1}^n \delta_{x_i}$.
Likewise $f \in L^p(\mu)$ where $\mu$ is the measure with density $\rho$ from which the data points are sampled.
In order to compare $f$ and $u_n$ in a way that is consistent with the $L^p$ topology we use the $TL^p$ space that was introduced in~\cite{GTS16}, where it was used to study the continuum limit of the graph total variation. 
Subsequent development of the $TL^p$ space has been carried out in~\cite{GTS18spectral, Thorpe2017jmiv}.

To compare the functions $u_n$ and $f$ above we need to take into account their domains, or more precisely to account for  $\mu$ and $\mu_n$. 
For that purpose the space of configurations is defined to be 
\[ TL^p(\Omega) = \left\{(\mu,f) \, : \, \mu \in \mathcal{P}(\overline \Omega), f \in L^p(\mu) \right\}. \]
The metric on the space is
\[ d_{TL^p}^p((\mu,f),(\nu,g)) = \inf \left\{ \int_{\Omega\times \Omega} |x-y|^p + |f(x) - g(y)|^p \, d \pi(x,y) \, : \, \pi\in\Pi(\mu,\nu) \right\} \]
where $\Pi(\mu,\nu) $ the set of transportation plans defined in Section~\ref{sec:ot}.
We note that the minimizing $\pi$ exists and that $TL^p$ space is a metric space,~\cite{GTS16}.

As shown in~\cite{GTS16},  when $\mu$ is absolutely continuous with respect to the  Lebesgue measure on $\Omega$, then the distance can be rewritten using transportation maps $T$, instead of transportation plans,
\[ d_{TL^p}^p((\mu,f),(\nu,g)) = \inf \left\{ \int_\Omega |x-T(x)|^p + |f(x) - g(T(x))|^p \, d \mu(x) \, : \, T_{\#} \mu = \nu \right\} \]
where the push forward of the measure $T_{\#} \mu$ is defined in Section~\ref{sec:ot}.
This formula provides an interpretation of the distance in our setting.
Namely, to compare functions $u_n: \X_n\to \R$ we define a mapping $T_n:\Omega \to \X_n$ and compare the functions $\tilde{f}_n = u_n\circ T_n$ and $f$ in $L^p(\mu)$, while also accounting for the transport, namely the $|x - T_n(x)|^p$ term.

We remark that the $TL^p(\overline \Omega)$ space is not complete, and that its completion was discussed in~\cite{GTS16}.
In the setting of this paper, since the corresponding measure is clear from context, we often say that $u_n$ converges in $TL^p$ to $f$ as a short way to say that $(\mu_n, u_n)$ converges in $TL^p$ to $(\mu,f)$.

\subsection{Local estimates for weighted Laplacian} \label{sec:lewl}

\begin{lemma} \label{lem:lewl}
There exists $C>0$ such that for each $ u \in H^1(B(0,1))$ there exists $ v \in H^1(B(0,1))$ such that 
\begin{align*}
 v|_{B(0,\frac12)} & \equiv \frac{1}{|B(0,\frac12)|} \int_{B(0,\frac12)}  u(x) dx \\
 v|_{\partial B(0,1)} & =  u|_{\partial B(0,1)} \\
 \|\nabla   v\|_{L^2(B(0,1))}  & \leq C \|\nabla  u\|_{L^2(B(0,1))}
\end{align*}
where the value on the boundary is considered in sense of the $L^2(\partial B(0,1))$ trace. 
\end{lemma}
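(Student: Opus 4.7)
The plan is to construct $v$ explicitly by radially interpolating between the mean value on $B(0,1/2)$ and $u$ on $\partial B(0,1)$. Set $c:=\dashint_{B(0,1/2)}u\,dx$. Adding a constant to $u$ changes neither the gradient, the average $c$, nor the trace on $\partial B(0,1)$ in any essential way, so for the purpose of the estimate we may reduce to $c=0$; for transparency we carry $c$ through the argument below.

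Fix once and for all a smooth function $\chi:[0,1]\to[0,1]$ with $\chi\equiv 0$ on $[0,1/2]$, $\chi(1)=1$, and $\|\chi'\|_\infty\leq C_0$, and define
\[ v(x) := \chi(|x|)\,(u(x)-c) + c. \]
The required boundary identities are immediate: $v\equiv c$ on $B(0,1/2)$ since $\chi\equiv 0$ there, and $v|_{\partial B(0,1)}=u|_{\partial B(0,1)}$ in the $L^2$-trace sense since $\chi(1)=1$. The product rule gives
\[ \nabla v(x) = \chi'(|x|)\,\frac{x}{|x|}\,(u(x)-c) + \chi(|x|)\,\nabla u(x), \]
and consequently
\[ \|\nabla v\|_{L^2(B(0,1))}^2 \leq 2C_0^2\,\|u-c\|_{L^2(B(0,1))}^2 + 2\,\|\nabla u\|_{L^2(B(0,1))}^2. \]

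The one nontrivial step — and the main technical ingredient — is to bound $\|u-c\|_{L^2(B(0,1))}$ by $\|\nabla u\|_{L^2(B(0,1))}$. Let $\bar u$ denote the average of $u$ over $B(0,1)$. The classical Poincar\'e inequality yields $\|u-\bar u\|_{L^2(B(0,1))}\leq C\,\|\nabla u\|_{L^2(B(0,1))}$, and Cauchy--Schwarz applied over $B(0,1/2)$ yields $|c-\bar u|\leq C\,\|u-\bar u\|_{L^2(B(0,1))}$. Combining these via the triangle inequality gives $\|u-c\|_{L^2(B(0,1))}\leq C\,\|\nabla u\|_{L^2(B(0,1))}$; substituting into the display above completes the proof with a constant depending only on $d$ and $C_0$. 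I anticipate no substantive obstacle beyond this step, as it is a routine consequence of the Poincar\'e inequality; the rest of the argument is just the bookkeeping for a standard cutoff construction.
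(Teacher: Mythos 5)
Your construction is identical to the paper's (the paper writes $v=\phi(|x|)\bar u+(1-\phi(|x|))u$ with $\phi=1-\chi$), and the key estimate is the same Poincar\'e-type bound $\|u-c\|_{L^2(B(0,1))}\leq C\|\nabla u\|_{L^2(B(0,1))}$ with $c$ the average over the half-ball; the paper simply cites this variant directly (Theorem 13.27 of \cite{leoni2}) where you derive it from the classical Poincar\'e inequality plus Cauchy--Schwarz. The proposal is correct and takes essentially the same approach.
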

\begin{proof}
 Let 
\[ \overline u =  \frac{1}{|B(0,\frac12)|} \int_{B(0,\frac12)}  u(x) dx. \]
Let $\phi \in C^\infty([0, 1] , [0,1])$ be such that $\phi(r) = 1$ for all $r \in [0, \frac12]$ and $\phi(1)=0$. Let $M= \max_{r \in [0,1]}  |\phi'(r)|$. Let
\[ v(x) = \phi(|x|) \overline u + (1-\phi(|x|)) u(x). \]
By Poincar\'e inequality stated in Theorem 13.27 of \cite{leoni2} there exists $C_1 >0$, independent of $u$, 
\[ \int_{B(0,1)} |u(x) - \overline u|^2 dx  \leq C_1 \int_{B(0,1)} |\nabla u(x)|^2 dx. \]
Using the Poincar\'e inequality we obtain, for $C =2+2MC_1$, 
\[ 
\int_{B(0,1)} |\nabla v|^2 dx  \leq 2 \int_{B(0,1)} |\nabla u|^2  + M | u - \overline u|^2 dx 
 \leq C \int_{B(0,1)} |\nabla u|^2 dx. 
\]
\end{proof}


\begin{thebibliography}{10}

\bibitem{AKT}
M.~Ajtai, J.~Koml{\'o}s, and G.~Tusn{\'a}dy.
\newblock On optimal matchings.
\newblock {\em Combinatorica}, 4(4):259--264, 1984.

\bibitem{alamgir2011phase}
M.~Alamgir and U.~V. Luxburg.
\newblock Phase transition in the family of p-resistances.
\newblock In {\em Advances in Neural Information Processing Systems}, pages
  379--387, 2011.

\bibitem{ando2007learning}
R.~K. Ando and T.~Zhang.
\newblock Learning on graph with laplacian regularization.
\newblock {\em Advances in Neural Information Processing Systems}, 19:25, 2007.

\bibitem{bardi2008optimal}
M.~Bardi and I.~Capuzzo-Dolcetta.
\newblock {\em Optimal control and viscosity solutions of
  Hamilton-Jacobi-Bellman equations}.
\newblock Springer Science \& Business Media, 2008.

\bibitem{BelNiy03SSL}
M.~Belkin and P.~Niyogi.
\newblock Using manifold structure for partially labeled classification.
\newblock In {\em Advances in Neural Information Processing Systems (NIPS)},
  pages 953--960, 2003.

\bibitem{BLSZ17}
A.~Bertozzi, X.~Luo, A.~Stuart, and K.~Zygalakis.
\newblock Uncertainty quantification in graph-based classification of high
  dimensional data.
\newblock {\em SIAM/ASA Journal on Uncertainty Quantification}, 6(2):568--595,
  2018.

\bibitem{braides02}
A.~Braides.
\newblock {\em {$\Gamma$}-convergence for beginners}, volume~22 of {\em Oxford
  Lecture Series in Mathematics and its Applications}.
\newblock Oxford University Press, Oxford, 2002.

\bibitem{brandt2011multigrid}
A.~Brandt and O.~E. Livne.
\newblock {\em Multigrid techniques: 1984 guide with applications to fluid
  dynamics}, volume~67.
\newblock SIAM, 2011.

\bibitem{bridle2013p}
N.~Bridle and X.~Zhu.
\newblock p-voltages: Laplacian regularization for semi-supervised learning on
  high-dimensional data.
\newblock In {\em Eleventh Workshop on Mining and Learning with Graphs
  (MLG2013)}, 2013.

\bibitem{calderLip2017}
J.~Calder.
\newblock {Consistency of Lipschitz learning with infinite unlabeled and finite
  labeled data}.
\newblock {\em arXiv:1710.10364}, 2017.

\bibitem{calder2017game}
J.~Calder.
\newblock The game theoretic p-{L}aplacian and semi-supervised learning with
  few labels.
\newblock {\em arXiv:1711.10144}, 2017.

\bibitem{calder2018limit}
J.~Calder and C.~K. Smart.
\newblock The limit shape of convex hull peeling.
\newblock {\em arXiv preprint arXiv:1805.08278}, 2018.

\bibitem{ssl}
O.~Chapelle, B.~Scholkopf, and A.~Zien.
\newblock {\em Semi-supervised learning}.
\newblock MIT, 2006.

\bibitem{costa2006determining}
J.~A. Costa and A.~O. Hero.
\newblock Determining intrinsic dimension and entropy of high-dimensional shape
  spaces.
\newblock In {\em Statistics and Analysis of Shapes}, pages 231--252. Springer,
  2006.

\bibitem{dalmaso93}
G.~Dal~Maso.
\newblock {\em An introduction to {$\Gamma$}-convergence}, volume~8 of {\em
  Progress in Nonlinear Differential Equations and their Applications}.
\newblock Birkh\"{a}user Boston, Inc., Boston, MA, 1993.

\bibitem{deng2009imagenet}
J.~Deng, W.~Dong, R.~Socher, L.-J. Li, K.~Li, and L.~Fei-Fei.
\newblock Imagenet: A large-scale hierarchical image database.
\newblock In {\em Computer Vision and Pattern Recognition, 2009. CVPR 2009.
  IEEE Conference on}, pages 248--255. IEEE, 2009.

\bibitem{DSST18}
M.~M. Dunlop, D.~Slep{\v{c}}ev, A.~M. Stuart, and M.~Thorpe.
\newblock Large data and zero noise limits of graph-based semi-supervised
  learning algorithms.
\newblock {\em to appear in Appl. Comput. Harmon. Anal, arXiv preprint
  arXiv:1805.09450}, 2018.

\bibitem{el2016asymptotic}
A.~El~Alaoui, X.~Cheng, A.~Ramdas, M.~J. Wainwright, and M.~I. Jordan.
\newblock Asymptotic behavior of lp-based {L}aplacian regularization in
  semi-supervised learning.
\newblock In {\em 29th Annual Conference on Learning Theory}, pages 879--906,
  2016.

\bibitem{GGHS}
N.~Garc{\'i}a~Trillos, M.~Gerlach, M.~Hein, and D.~Slep{\v{c}}ev.
\newblock Error estimates for spectral convergence of the graph {L}aplacian on
  random geometric graphs towards the {L}aplace-{B}eltrami operator.
\newblock {\em arXiv preprint arXiv:1801.10108}, 2018.

\bibitem{GTS15trans}
N.~Garc\'{i}a~Trillos and D.~Slep\v{c}ev.
\newblock On the rate of convergence of empirical measures in
  {$\infty$}-transportation distance.
\newblock {\em Canad. J. Math.}, 67(6):1358--1383, 2015.

\bibitem{GTS16}
N.~Garc\'{i}a~Trillos and D.~Slep\v{c}ev.
\newblock Continuum limit of total variation on point clouds.
\newblock {\em Archive for Rational Mechanics and Analysis}, 220(1):193--241,
  2016.

\bibitem{GTS18spectral}
N.~Garc\'{i}a~Trillos and D.~Slep\v{c}ev.
\newblock A variational approach to the consistency of spectral clustering.
\newblock {\em Appl. Comput. Harmon. Anal.}, 45(2):239--281, 2018.

\bibitem{GTS17jmlr}
N.~Garc\'{i}a~Trillos, D.~Slep\v{c}ev, J.~von Brecht, T.~Laurent, and
  X.~Bresson.
\newblock Consistency of {C}heeger and ratio graph cuts.
\newblock {\em J. Mach. Learn. Res.}, 17:Paper No. 181, 46, 2016.

\bibitem{gilbarg2015elliptic}
D.~Gilbarg and N.~S. Trudinger.
\newblock {\em Elliptic partial differential equations of second order}.
\newblock springer, 2015.

\bibitem{greenbaum1997iterative}
A.~Greenbaum.
\newblock {\em Iterative methods for solving linear systems}, volume~17.
\newblock Siam, 1997.

\bibitem{he2004manifold}
J.~He, M.~Li, H.-J. Zhang, H.~Tong, and C.~Zhang.
\newblock Manifold-ranking based image retrieval.
\newblock In {\em Proceedings of the 12th Annual ACM International Conference
  on Multimedia}, pages 9--16. ACM, 2004.

\bibitem{he2006generalized}
J.~He, M.~Li, H.-J. Zhang, H.~Tong, and C.~Zhang.
\newblock Generalized manifold-ranking-based image retrieval.
\newblock {\em IEEE Transactions on image processing}, 15(10):3170--3177, 2006.

\bibitem{hein2005intrinsic}
M.~Hein and J.-Y. Audibert.
\newblock Intrinsic dimensionality estimation of submanifolds in {Rd}.
\newblock In {\em Proceedings of the 22nd International Conference on Machine
  learning}, pages 289--296. ACM, 2005.

\bibitem{kyng2015algorithms}
R.~Kyng, A.~Rao, S.~Sachdeva, and D.~A. Spielman.
\newblock Algorithms for lipschitz learning on graphs.
\newblock In {\em Proceedings of The 28th Conference on Learning Theory}, pages
  1190--1223, 2015.

\bibitem{lecun1998gradient}
Y.~LeCun, L.~Bottou, Y.~Bengio, and P.~Haffner.
\newblock Gradient-based learning applied to document recognition.
\newblock {\em Proceedings of the IEEE}, 86(11):2278--2324, 1998.

\bibitem{LeightonShor}
T.~Leighton and P.~Shor.
\newblock Tight bounds for minimax grid matching with applications to the
  average case analysis of algorithms.
\newblock {\em Combinatorica}, 9(2):161--187, 1989.

\bibitem{leoni2}
G.~Leoni.
\newblock {\em A first course in Sobolev spaces}, volume 181.
\newblock American Mathematical Soc., 2017.

\bibitem{luxburg2004distance}
U.~v. Luxburg and O.~Bousquet.
\newblock Distance-based classification with lipschitz functions.
\newblock {\em Journal of Machine Learning Research}, 5(Jun):669--695, 2004.

\bibitem{nadler2009semi}
B.~Nadler, N.~Srebro, and X.~Zhou.
\newblock Semi-supervised learning with the graph {L}aplacian: The limit of
  infinite unlabelled data.
\newblock In {\em Neural Information Processing Systems (NIPS)}, 2009.

\bibitem{rudin2006real}
W.~Rudin.
\newblock {\em Real and complex analysis}.
\newblock Tata McGraw-Hill Education, 2006.

\bibitem{ruge1987algebraic}
J.~W. Ruge and K.~St{\"u}ben.
\newblock Algebraic multigrid.
\newblock In {\em Multigrid methods}, pages 73--130. SIAM, 1987.

\bibitem{santambrogio}
F.~Santambrogio.
\newblock {\em Optimal transport for applied mathematicians}, volume~87 of {\em
  Progress in Nonlinear Differential Equations and their Applications}.
\newblock Birkh\"{a}user/Springer, Cham, 2015.
\newblock Calculus of variations, PDEs, and modeling.

\bibitem{shi2017weighted}
Z.~Shi, S.~Osher, and W.~Zhu.
\newblock Weighted nonlocal laplacian on interpolation from sparse data.
\newblock {\em Journal of Scientific Computing}, 73(2-3):1164--1177, 2017.

\bibitem{ShorYukich}
P.~W. Shor and J.~E. Yukich.
\newblock Minimax grid matching and empirical measures.
\newblock {\em Ann. Probab.}, 19(3):1338--1348, 1991.

\bibitem{Singer06}
A.~Singer.
\newblock From graph to manifold {L}aplacian: The convergence rate.
\newblock {\em Applied and Computational Harmonic Analysis}, 21(1):128--134,
  2006.

\bibitem{SleTho17plap}
D.~Slep{\v{c}}ev and M.~Thorpe.
\newblock Analysis of p-{L}aplacian regularization in semi-supervised learning.
\newblock {\em to appear in SIAM J. Math. Anal., arXiv:1707.06213}, 2017.

\bibitem{spielman2004nearly}
D.~A. Spielman and S.-H. Teng.
\newblock Nearly-linear time algorithms for graph partitioning, graph
  sparsification, and solving linear systems.
\newblock In {\em Proceedings of the thirty-sixth annual ACM symposium on
  Theory of computing}, pages 81--90. ACM, 2004.

\bibitem{Talagrand}
M.~Talagrand.
\newblock {\em Upper and lower bounds of stochastic processes}, volume~60 of
  {\em Modern Surveys in Mathematics}.
\newblock Springer-Verlag, Berlin Heidelberg, 2014.

\bibitem{Thorpe2017jmiv}
M.~Thorpe, S.~Park, S.~Kolouri, G.~K. Rohde, and D.~Slep\v{c}ev.
\newblock A transportation {$L^p$} distance for signal analysis.
\newblock {\em J. Math. Imaging Vision}, 59(2):187--210, 2017.

\bibitem{villani09}
C.~Villani.
\newblock {\em Optimal transport}, volume 338 of {\em Grundlehren der
  Mathematischen Wissenschaften [Fundamental Principles of Mathematical
  Sciences]}.
\newblock Springer-Verlag, Berlin, 2009.
\newblock Old and new.

\bibitem{wang2013multi}
Y.~Wang, M.~A. Cheema, X.~Lin, and Q.~Zhang.
\newblock Multi-manifold ranking: Using multiple features for better image
  retrieval.
\newblock In {\em Pacific-Asia Conference on Knowledge Discovery and Data
  Mining}, pages 449--460. Springer, 2013.

\bibitem{xu2011efficient}
B.~Xu, J.~Bu, C.~Chen, D.~Cai, X.~He, W.~Liu, and J.~Luo.
\newblock Efficient manifold ranking for image retrieval.
\newblock In {\em Proceedings of the 34th international ACM SIGIR Conference on
  Research and Development in Information Retrieval}, pages 525--534. ACM,
  2011.

\bibitem{yang2013saliency}
C.~Yang, L.~Zhang, H.~Lu, X.~Ruan, and M.-H. Yang.
\newblock Saliency detection via graph-based manifold ranking.
\newblock In {\em Proceedings of the IEEE conference on Computer Vision and
  Pattern Recognition}, pages 3166--3173, 2013.

\bibitem{zhou2004learning}
D.~Zhou, O.~Bousquet, T.~N. Lal, J.~Weston, and B.~Sch{\"o}lkopf.
\newblock Learning with local and global consistency.
\newblock {\em Advances in Neural Information Processing Systems},
  16(16):321--328, 2004.

\bibitem{zhou2005learning}
D.~Zhou, J.~Huang, and B.~Sch{\"o}lkopf.
\newblock Learning from labeled and unlabeled data on a directed graph.
\newblock In {\em Proceedings of the 22nd International Conference on Machine
  Learning}, pages 1036--1043. ACM, 2005.

\bibitem{ZhoSch05}
D.~Zhou and B.~Sch\"{o}lkopf.
\newblock Regularization on discrete spaces.
\newblock In {\em Proceedings of the 27th DAGM Conference on Pattern
  Recognition}, PR'05, pages 361--368, Berlin, Heidelberg, 2005.
  Springer-Verlag.

\bibitem{zhou2004ranking}
D.~Zhou, J.~Weston, A.~Gretton, O.~Bousquet, and B.~Sch{\"o}lkopf.
\newblock Ranking on data manifolds.
\newblock {\em Advances in Neural Information Processing Systems}, 16:169--176,
  2004.

\bibitem{zhou11}
X.~Zhou and M.~Belkin.
\newblock Semi-supervised learning by higher order regularization.
\newblock In {\em Proceedings of the 14th International Conference on
  Artificial Intelligence and Statistics}, volume~15 of {\em Proceedings of
  Machine Learning Research}, pages 892--900, 2011.

\bibitem{zhou2011iterated}
X.~Zhou, M.~Belkin, and N.~Srebro.
\newblock An iterated graph laplacian approach for ranking on manifolds.
\newblock In {\em Proceedings of the 17th ACM SIGKDD International Conference
  on Knowledge Discovery and Data Mining}, pages 877--885. ACM, 2011.

\bibitem{zhu2003semi}
X.~Zhu, Z.~Ghahramani, J.~Lafferty, et~al.
\newblock Semi-supervised learning using {G}aussian fields and harmonic
  functions.
\newblock In {\em International Conference on Machine Learning}, volume~3,
  pages 912--919, 2003.

\end{thebibliography}
\end{document}